 \setlist[itemize]{leftmargin=*}
\numberwithin{equation}{section}
\numberwithin{figure}{section}
\theoremstyle{plain}
\newtheorem{thm}{\protect\theoremname}[section]
\theoremstyle{remark}
\newtheoremstyle{myclaim}{2mm}{2mm}{}{}{\bfseries}{}{ }{\thmnumber{#2}.\thmnote{ #3. }} 
\theoremstyle{myclaim} 
\newtheorem{claim}[thm]{}
\theoremstyle{plain}
\newtheorem{lem}[thm]{\protect\lemmaname}
\newtheoremstyle{myrem}{2mm}{2mm}{\normalfont}{}{\bfseries Remark }{}{ }{\thmnumber{#2}.\thmnote{ #3. }} 
\theoremstyle{myrem} 
\newtheorem{rem}[thm]{\protect\remarkname}
\theoremstyle{definition}
\newtheorem{defn}[thm]{\protect\definitionname}
\theoremstyle{definition}
\newtheorem{example}[thm]{\protect\examplename}
\theoremstyle{plain}
\newtheorem{cor}[thm]{\protect\corollaryname}
\numberwithin{figure}{section}
\DeclareMathOperator{\Id}{Id}
\DeclareMathOperator{\id}{Id}
\DeclareMathOperator{\Ker}{Ker}
\DeclareMathOperator{\coker}{Coker}
\def\ot{\otimes}
\def\r{\rho}
\def\De{\Delta}
\def\M{{\mathscr{M}}}\def\T{{\mathscr{T}}}
\def\I{\mathbb{I}}
\def\gm{{g(A,m_0)}} 
\def\N{\mathbb N}
\email{}
\thanks{}
\newcommand{\restr}[1]{\raisebox{-.5ex}{$|$}_{#1}}
\newcommand{\xyR}[1]{%
\makeatletter
\xydef@\xymatrixrowsep@{#1}
\makeatother
} 
\newcommand{\xyC}[1]{%
\makeatletter
\xydef@\xymatrixcolsep@{#1}
\makeatother
} 
\def\I{\mathbb{I}}
\providecommand{\corollaryname}{Corollary}
\providecommand{\definitionname}{Definition}
\providecommand{\examplename}{Example}
\providecommand{\lemmaname}{Lemma}
\providecommand{\remarkname}{Remark}
\providecommand{\theoremname}{Theorem}
\global\long\def\coalg#1#2{\mathscr{C}(#1,#2)}%
\global\long\def\cat#1{\mathscr{#1}}%
\global\long\def\cosl#1#2{(#1\negmedspace\downarrow\negmedspace\mathscr{#2})}%
\global\long\def\Id#1#2{\mathrm{Id}_{#1}#2}%
\global\long\def\ot{\otimes}%
\global\long\def\Ker{\mathrm{Ker\,}}%
\global\long\def\Im{\mathrm{Im\,}}%
\global\long\def\res#1#2{#2\restr{#1}}%
\global\long\def\cti{(\widetilde{C},\iota)}%
\global\long\def\ti#1{\widetilde{#1}}%
\global\long\def\u#1{_{\langle#1\rangle}}%
\global\long\def\r#1{_{(#1)}}%
\global\long\def\vf{\varphi}%
\global\long\def\mbb#1#2{\mathbb{#1}^{#2}}%
\newcommandx\mrm[2][usedefault, addprefix=\global, 1=, 2=]{\mathrm{#1}_{X}^{#2}(A,m)}%
\global\long\def\Fv{F_{V}}%
\global\long\def\Ftv{F_{V}^{\mathscr{\T}}}%
\global\long\def\Ff{\mathscr{F}_{f}^{\mathscr{}}}%
\global\long\def\Fm{\mathscr{F}_{m}^{\mathscr{}}}%
\global\long\def\Ftf{F_{f}^{\mathscr{T}}}%
\global\long\def\Ftm{F_{m}^{\mathscr{T}}}%
\global\long\def\Hf{\mathfrak{F}_{f}}%
\global\long\def\Hfp{\mathfrak{F}'_{f}}%
\global\long\def\Dfr{\mathfrak{D}_{m}}%
\global\long\def\Dmp{\mathfrak{D}'_{m}}%
\global\long\def\Dm{\mathscr{D}_{m}^{\mathscr{}}}%
\global\long\def\Dtm{D_{m}^{\mathscr{T}}}%
\global\long\def\jm{\jmath_{m}}%
\global\long\def\jf{\jmath_{f}}%
\global\long\def\sf{\iota_{f}}%
\global\long\def\sm{\iota_{m}}%
\global\long\def\pf{ \pi_{f}}%
\global\long\def\pv{ \pi_{V}^{\T}}%
\global\long\def\pm{ \pi_{m}}%
\global\long\def\al#1{\alpha{}_{#1}}%
\global\long\def\alv{\alpha{}_{V}^{\mathscr{T}}}%
\global\long\def\alf{\alpha_{f}}%
\global\long\def\alm{\alpha_{m}}%
\global\long\def\be#1{\beta_{#1}}%
\global\long\def\bev{\beta_{V}^{\mathscr{T}}}%
\global\long\def\bef{\beta_{f}}%
\global\long\def\bem{\beta_{m}}%
\global\long\def\gf{\gamma_{f}}%
\global\long\def\gm{\gamma_{m}}%
\begin{document}
\title{On deformation theory of associative algebras in monoidal categories}
\author{Abdenacer Makhlouf}
\address{\hspace*{-4mm}IRIMAS--Department of Mathematics, University of Haute--Alsace,
68093 Mulhouse, France}
\email{Abdenacer.Makhlouf@uha.fr}
\author{Drago\c{s} \c{S}tefan}
\address{\hspace*{-4mm}University of Bucharest, Department of Mathematics,
14 Academiei Street, Bucharest Ro-010014, Romania}
\email{dragos.stefan@fmi.unibuc.ro}
\begin{abstract}
We extend the classical concept of deformation of an associative algebra,
as introduced by Gerstenhaber, by using monoidal linear categories
and cocommutative coalgebras as foundational tools. To achieve this
goal, we associate to each cocommutative coalgebra $C$ and each linear
monoidal category $\M$, a $\Bbbk$-linear monoidal category $\M_{C}$.
This construction is functorial: any coalgebra morphism $\iota:C\to\widetilde{C}$
 induces a strict monoidal functor $\iota^{*}:\M_{\widetilde{C}}\to\M_{C}$.
An $\iota$-deformation of an algebra $(A,m)$ is defined as an algebra
$(A,\widetilde{m})$ in the fiber of $\iota^{*}$ over $(A,m)$. Within
this framework, the deformations of $(A,m)$ are organized into a
presheaf, which is shown to be representable. In other words, there
exists a universal deformation satisfying a specific universal property.

It is well established that classical deformation theory is deeply
connected to Hochschild cohomology. We identify and analyze the cohomology
theory that governs $\iota$-deformations in the second part of the
paper. Additionally, several particular cases and applications of
these results are examined in detail.
\end{abstract}

\maketitle
\tableofcontents{}

\section*{Introduction }

Deforming mathematical structures is a fundamental method
in mathematics and physics, focusing on how a given object can
be smoothly modified while preserving key properties. Deformation
theory serves as a bridge between algebra, geometry, and physics,
providing profound understanding into how structures evolve and interact.
The origins of deformation theory trace back to the works of Fröhlicher-Nijenhuis \cite{FN}
and Kodaira-Spencer. However, it was Gerstenhaber who developed   a systematic method for studying deformations of associative algebras, by extending the base field of an algebra to formal power series, see \cite{G1,G2,G3,G4} and \cite{GS1,GS2,GS3}.

According to \cite[p.21]{GS2}, a deformation of $A$ is given by
an ``associative'' formal power series: 
\begin{equation*}
m_{t}:=\sum_{n=0}^{\infty}m_{n}t^{n},
\end{equation*}
where each coefficient $m_{n}:A\times A\to A$ is $\Bbbk$-bilinear
and $m_{0}=m$. The associativity of $m_{t}$ means that the condition:
\begin{equation*}
\sum_{i+j=n}m_{i}\circ(m_{j}\ot_{\Bbbk}id_A)=\sum_{i+j=n}m_{i}\circ(id_A\ot_{\Bbbk}m_{j})
\label{eq:def_Gerstenhaber}\tag{AC}
\end{equation*}
must hold for all $n\geq0$.

Gerstenhaber\textquoteright s key discovery was the connection between
the deformation theory of associative algebras  and Hochschild cohomology.
Specifically, the second Hochschild cohomology group $\mathrm{HH}^{2}(A,A)$ 
plays a central role. The first-order deformation term $m_{1}$ is
always a 2-cocycle, and first-order deformations differing by a boundary
are equivalent. Consequently, the cohomology class of $m_{1}$ in
$\mathrm{HH}^{2}(A,A)$ determines the equivalence class of a deformation.
Furthermore, if a certain element in $\mathrm{HH}^{3}(A,A)$, called
the obstruction to deformation, is nonzero, then it is not possible
to extend a deformation to a higher-order one. If the second cohomology
group vanishes, the algebra is said to be rigid, meaning it admits
no non-trivial deformations.

Gerstenhaber's approach to study deformations of associative algebras has been extended
to a wide range of algebraic structures. These include algebra diagrams (presheaves
of algebras on a category), Lie algebras \cite{NR1,NR2,BM}, coalgebras,
Hopf algebras and more. A thorough overview of deformation theory
and cohomology can be found in \cite{GS2}, covering topics such as
jump deformations, rigidity, and the Gerstenhaber-Schack complex.

Building on Schlessinger's seminal work \cite{Sc}, Gerstenhaber\textquoteright s
method was further generalized by replacing the formal power series
ring $\Bbbk[[t]]$ with an arbitrary commutative associative algebra.
In this generalized framework, the deformed multiplication is defined
on $B\ot A$; see, for example, \cite{Fi,FF}. Furthermore, deformations 
can be represented by a functor $\mathrm{Def}_{g}$ from the
category of local Artinian $\Bbbk$-algebras with residue field $\Bbbk$
to the category of sets \cite{ELS}. Specific deformation theories
have also been developed for abelian categories \cite{LVdB},
monoidal 2-categories \cite{DHL} and prestacks (via Maurer-Cartan
equations) \cite{DL1}.

Among the significant applications of deformation theory is the construction
of quantized universal enveloping algebras, which underpin the theory
of quantum groups. In physics, algebraic deformations find crucial
applications in quantization, particularly through methods like deformation
quantization. Here, the algebraic structure of a classical physical
system is deformed into a quantum structure. Rather than working directly
with operators, this approach modifies the classical algebra (e.g.,
the Poisson bracket) to yield a quantum structure, such as the Moyal
bracket.

In this article, we present a new approach to the deformation theory
of associative algebras from a new perspective, using the framework
of monoidal category theory in combination with coalgebraic methods.
To ease the exposition of our results, we need some terminology and
notational conventions. Throughout  this paper, $\cat V$ represents the category
of vector spaces over a field $\Bbbk$. For a category $\mathfrak C$,
we denote the set of morphisms from $X$ to $Y$ by $\mathfrak C(X,Y)$.
The linear space of polynomials $\Bbbk[t]$ is thought of as a cocommutative
coalgebra with the comultiplication: 
\[
\Delta(t^{n})=\sum_{i=0}^{n}t^{i}\ot_{\Bbbk}t^{n-i}.
\]
 For vector spaces $X$ and $Y$, we can identify $\cat V_{t}(X,Y):=\cat V(\Bbbk[t],\cat V(X,Y))$
with $\cat V(X,Y)[[t]]$, the space of formal power series in $t$
with coefficients in $\cat V(X,Y)$. Under this identification, a
series $\sum f_{n}t^{n}$ corresponds to a linear map $f$ if and
only if $f(t^{n}):=f_{n}$ for all $n\in\N$. We define a $\Bbbk$-linear
monoidal category $\cat V_{t}$, in which objects are vector spaces
and morphisms are linear maps $f:\Bbbk[t]\to\cat V(X,Y)$, respectively.
The composition and tensor product structures in $\cat V_{t}$ are
analogous to the usual convolution product on the dual of a coalgebra.
More precisely, we have:
\begin{equation*}
(f'*f'')(t^{n}):=\sum_{i+j=n}f'(t^{i})\circ f''(t^{j})\qquad\text{and}\qquad(f_{1}\ot f_{2})(t^{n}):=\sum_{i+j=n}f_{1}(t^{i})\ot_{\Bbbk}fg_{2}(t^{j}),
\end{equation*}
for any $f'\in\cat V(X,Y)$, $f''\in\cat V(Z,X)$ and $f_{i}\in\cat V(X_{i},Y_{i})$.

We will use the term algebra in a monoidal category (such as $\cat V$
and $\cat V_{t}$) to refer to an object equipped with a multiplication
that is always associative (even if it lacks a unit). 

The cornerstone of our approach is the observation that a deformation
of an algebra $(A,m)$ in $\cat V$ can be equivalently viewed as
an algebra in $\cat V_{t}$: a formal power series $m_{t}$ satisfies
the associativity condition (AC) and $m_{0}=m$,
if and only if $(A,\ti m)$ is an algebra in $\cat V_{t}$ and $\ti m(1)=m$. 

This reformulation shifts the focus of deformation theory to the study
of algebras in monoidal categories. Our main objective is to extend
Gerstenhaber's theory to a broader context. For a cocommutative coalgebra
$C$ and a linear monoidal category $\M$, we construct a $\Bbbk$-linear
monoidal category $\mathcal{M}_{C}$ with the same objects as $\M$,
 but hom-spaces
\[
\M_{C}(X,Y)=\cat V(C,\M(X,Y)).
\]
The construction is functorial: any coalgebra morphism $\iota:C\to\ti C$
induces a strict monoidal functor $\iota^{*}$ from $\mathcal{M}_{\ti C}$
to $\mathcal{M}_{C}$, acting as the identity on objects and mapping
$f$ to $f\circ\iota$ for any morphism $f$. Notably, $\cat V_{\Bbbk}=\cat V$
and $\cat V_{\Bbbk[t]}=\cat V_{t}$, so we recover the classical case
through inclusion $\iota:\Bbbk\to\Bbbk[t]$.

In this work, we define an $\iota$-deformation of an algebra $(A,m)$
in $\M_{C}$, as an algebra $(A,\ti m)$ in $\M_{\ti C}$, belonging
to the fiber of $\iota^{*}$ over $(A,m)$. Our primary objective
is to establish that the deformations of an algebra in $\M_{C}$ define
a representable presheaf. In other words, we aim to prove the existence
of a deformation which is universal in a certain sense.

To enhance generality and applicability, we assume that all coalgebras
considered belong to a specific admissible class of coalgebras, denoted
$\T$; see Definition \ref{def:T}. Coalgebras in $\T$ should be
understood as sharing a set of specific common properties. For instance,
the class of cocommutative coalgebras provides a canonical example,
as does the class of cocommutative and pointed coalgebras. Admissible
classes are of interest because, when working within $\T$, the coalgebraic
counterpart of the universal deformations to be constructed will automatically
belong to $\T$. 

A key tool in our investigation of deformations is the notion of $\iota$-factorizations
for a linear map $f\in\cat V(C,V)$. Specifically, an $\iota$-factorization
of $f$ is a linear function $\ti f:\ti C\to V$, such that $\ti f\circ\iota=f$.
Note that any $\iota$-deformation $(A,\ti m)$ of $(A,m)$' can be
viewed as an $\iota$-factorization of $m$. Keeping in mind our goal,
we define a presheaf $\mathcal{F}_{f}:\cosl CT\to\mathcal{S}$, where
\[
\Ff\cti:=\{\ti f\in\cat V(\ti C,V)\mid\ti f\circ\iota=f\},
\]
and $\mathcal{F}_{f}$ acts on morphisms in a canonical way. Here,
$\cat S$ and $\cosl CT$ denote the category of sets and the coslice
category of $\T$ under $C$, respectively. Recall that the objects
in $\cosl CT$ are pairs $(\ti C,\iota)$, with $\ti C$ a coalgebra
in $\T$ and $\iota$ an arbitrary coalgebra morphism from $C$ to
$\ti C$. The morphisms in this category are defined in a natural
manner.

Our first significant result, Theorem \ref{prop:F=00003DH}, states
that $\Ff$ is representable, i.e., there exists $(\Ftf,\sf)$ in $\cosl CT$
such that $\Ff$ and $\cosl C{\T}(-,(\Ftf,\sf))$ are isomorphic.
An essential step of the proof is to show that, for any vector space
$V$, there exists a cofree coalgebra of type $\T$ over $V$. This
is a coalgebra $\Ftv$ in $\T$, together with a linear map $\pv$ from
$\Ftv$ to $V$, satisfying the universal property of cofree coalgebra,
for all coalgebras in $\T$; see \ref{thm:terminal_coalgebra}. 

Finally, the significance of the admissible class $\T$, is further
highlighted by the observation that, if $C$ and $\ti C$ belong to
$\T$ and are, in addition, pointed or connected, then $\Ftf$ retains
the same properties, as proved in Theorem \ref{thm:D=00003DC}.

Turning back to deformations, we can proceed as above to define, for
any algebra $(A,m)$ in $\M_{C}$, a presheaf $\Dm:\cosl CT\to\cat S$.
It assigns to a pair $(\ti C,\iota)$ the set of $\iota$-deformations
of $(A,m)$. Using the results derived for factorizations, we demonstrate
the existence of a morphism $\iota_{m}:C\to\Dtm$ of coalgebras in
$\T$ such that $\Dm$ and $\cosl CT(-,(\Dtm,\iota_{m}))$ are isomorphic;
see Theorem \ref{thm:D=00003DH}, where a slightly different notation
is used.

We have already noticed the close connection between deformation theory
and homological algebra. The second part of this article focuses on
the cohomology that governs the $\iota$-deformations of an algebra
$(A,m)$ in $\M_{C}$. We now restrict our attention to cocommutative
coalgebras (that is, $\T$ is the class of cocommutative algebras)
and require that $\iota$ defines a coalgebra extension, as described
in Subsection \ref{claim:extension}. 

From the theory of coalgebra extensions, it is well known that $X:=\coker\iota$
inherits a canonical structure as both a left and a right comodule
over $C$. Since $C$ is cocommutative, the left and right coactions
coincide up to the identification $C\ot_{\Bbbk}X\simeq X\ot_{\Bbbk}C$,
allowing us to disregard the left comodule structure. Furthermore,
$\ti C\simeq C\oplus X$ and through this decomposition, the coalgebra
structure of $\ti C$ can be entirely reconstructed from the comultiplication
of $C$, the right coaction on $X$ and a specific 2-cocycle $\omega:X\rightarrow C^{\ot2}$. 

The cohomology that governs the deformations of $(A,m)$ in the current
framework is defined by constructing a complex, denoted $\mrm[C][*]$,
as detailed in Subsection \ref{claim:C*}. 

A $\iota$-deformation $(A,\tilde{m})$ of $(A,m)$ is uniquely determined
by its restriction to $X$, which we denote by $\ti m_{X}$. The generalized
Maurer-Cartan equation:
\[
d_{X}^{2}(\ti m_{X})=\zeta,
\]
forms the bridge between cohomology and deformations. Here, as in
classical deformation theory, $\zeta:X\to\M(A^{\ot3},A)$
denotes the obstruction to deformation and it is defined by means
of $m$ and $\omega$. Using generalized Maurer-Cartan equation, we
prove another central result of this paper: Theorem \ref{thm:main_result_for_extensions}
states that $\zeta$ is always a $3$-cocycle, and it is a coboundary
if and only if $\Dm\cti$ is not empty. Moreover, if there are $\iota$-deformations
of $(A,m)$, then there exists a one-to-one correspondence between
$\Dm\cti/_{\sim}$, the set of equivalence classes of $\iota$-deformations
of $(A,m)$, and $\mrm[H][2]$. 

In the final section of the article we present several applications
of these results. Particular attention is given to the case where
the extension $C\subseteq\ti C$ arises from a graded cocommutative
coalgebra $D=\bigoplus_{i\in\N}D^{i}$, setting $C:=\oplus_{i<n}D^{i}$
and $\ti C:=\oplus_{i\leq n}D^{i}$ for some $n\geq1$. The special
case $n=1$ corresponds to $\iota$-deformations that we call infinitesimal.
For further details, see Subsections \ref{claim:graded_coalg} and
\ref{rem:infinitesimal}.

If the vector space $m(D^{0})$ is one-dimensional, we say that $m$
has rank 1. The properties of deformations of rank 1 algebras are
summarized in Theorem \ref{thm:rank_1}. Another significant case,
addressed in Theorem \ref{thm:completely_reducible}, concerns coalgebra
extensions where $X$ is completely reducible, meaning that it can be expressed
as a direct sum of one-dimensional subcomodules. For example, if $D^{0}$
is pointed and cosemisimple (e.g., $D=\Bbbk[t_{1},\dots,t_{r}])$,
then the corresponding comodule $X$ is completely reducible, as shown
in Corollary \ref{cor:D=00003Dpointed_cosemi}.

As previously mentioned, in the classical case, deforming a unital
algebra results in another unital algebra. This is one reason why
we focus on studying $\iota$-deformations of algebras that are not
necessarily unital. In the context of graded cocommutative coalgebras,
we establish an analogous result in Theorem \ref{thm:unit}.

We briefly outline the structure of the article. The foundational
results that we need later on in the paper are recalled in the first
section. The category $\cat M_{C}$ is introduced and examined in
the second section. The third section delves into $\iota$-factorizations
and the corresponding universal $\cat T$-type coalgebras, including
the $\T$-type cofree coalgebra over a vector space. The $\iota$-deformations
and the attached presheaves are introduced in the fourth section.
The fifth section is dedicated to the cohomology that controls $\iota$-deformations
in the case where $\iota$ defines a cocommutative coalgebra extension.
In the final section some applications of our most significant findings
are discussed.

\section{Preliminaries }

We begin by revisiting the definitions and foundational properties
of the key concepts required for our work, including monoidal categories
and (cofree) coalgebras.

Recall that $\Bbbk$ and  $\cat V$ denote a fixed field and the category of vector spaces over $\Bbbk$, respectively. In the introduction, we also used the notation  $\mathfrak{C}(X,Y)$ for the set of morphisms from $X$ to $Y$ in a category $\mathfrak{C}$.

Given morphisms $f:X\to Y$
and $g:Y\to Z$, their composition is denoted by $g\circ f$. The
identity morphism for an object $X\in\mathfrak{C}$ is denoted either
$I_{X}$ or simply $X$. The canonical hom-functor $\mathfrak{C}(-,X)$
assigns to a object $Y$ and to a morphism $f\in\mathfrak{C}(Y,Y')$
the set $\mathfrak{C}(Y,X)$ and the map $\mathfrak{C}(f,X):\mathfrak{C}(Y',X)\to\mathfrak{C}(Y,X)$,
respectively, where $\mathfrak{C}(f,X)(u)=u\circ f$. This functor
is an example of a presheaf, i.e., a contravariant functor with target
$\cat S$, the category of sets. In a similar way one defines the
(covariant) functor $\mathfrak{C}(X,-)$. 
\begin{claim}[Monoidal (tensor) categories]
 A \emph{monoidal category} $(\cat M,\ot,\I,a,l,r)$ is a category
$\M$ equipped with a \emph{tensor product} functor $\otimes:\cat M\times\cat M\rightarrow\cat M$,
a \emph{unit} object $\I\in\cat M$ and natural isomorphisms $a_{X,Y,Z}:(X\otimes Y)\otimes Z\rightarrow X\otimes(Y\otimes Z)$, 
$l_{X}:\I\otimes X\rightarrow X$ and $r_{X}:X\otimes\I\rightarrow X$. 
The morphism $a$ is called the \emph{associativity constraint}, while
$l$ and $r$ are called the \emph{left} and the \emph{right unit
constraints}, respectively. These constraints satisfy two coherence
conditions: the \emph{Pentagon Axiom} (ensuring associativity) and
the \emph{Triangle Axiom} (ensuring compatibility of the unit constraints).
For any objects $X,Y,Z,U$ in $\cat M$, the following commutative
diagrams illustrate these axioms. 
\begin{figure}[H]
\includegraphics{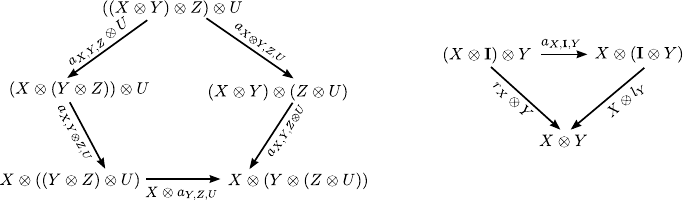}
\end{figure}
\end{claim}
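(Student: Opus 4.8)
The final displayed statement is the \emph{definition} of a monoidal category, housed in a numbered environment for cross-referencing; it lays down data and axioms rather than asserting an implication. My first observation, therefore, is that there is no theorem to prove here: the associativity constraint $a$, the unit constraints $l$ and $r$, and the Pentagon and Triangle Axioms are \emph{postulated} as part of the structure $(\cat M,\ot,\I,a,l,r)$, not derived from anything prior. A genuine proof proposal can accordingly only consist of verifying that the stated data are internally consistent and unambiguous; it cannot establish a nontrivial conclusion, because none is claimed.

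Were one to spell out the verifications implicit in reading this definition, I would proceed in three steps. First, I would confirm that the given $\otimes\colon\cat M\times\cat M\to\cat M$ really is a bifunctor: that it preserves identities and satisfies the interchange law $(g\ot g')\circ(f\ot f')=(g\circ f)\ot(g'\circ f')$ for all composable morphisms. This is immediate once $\otimes$ is supplied as an honest functor on the product category $\cat M\times\cat M$, and it is the only place where a routine calculation could even arise. Second, I would note that $a_{X,Y,Z}$, $l_X$ and $r_X$ are demanded to be natural isomorphisms, so that their naturality squares commute and every component is invertible; again this is a requirement imposed on the data, not a consequence to be proved. Third, the Pentagon and Triangle diagrams depicted in the accompanying figure are imposed as commuting, and nothing about them is to be deduced at this point.

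The only genuinely provable assertion in the neighbourhood of this definition is Mac Lane's coherence theorem, which guarantees that every formal diagram built from $a$, $l$ and $r$ commutes. That, however, is a distinct result, not the statement above, and the paper does not claim it here; I would defer any coherence argument to the place where such a fact is actually invoked. Hence the honest plan is to record that the final statement is purely definitional and requires no argument --- the main obstacle being only the temptation to prove a theorem that has not been stated.
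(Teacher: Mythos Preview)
Your assessment is correct: the statement is purely a definition of a monoidal category and the paper accordingly offers no proof, only the defining data and axioms together with the reference diagrams. There is nothing further to compare.
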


\begin{claim}[The Coherence Theorem]
\label{cl:CohThm} A monoidal category is said to be \emph{strict}
if its constraints are identity morphisms. Accordingly to the Coherence
Theorem, due to S. Mac Lane, every monoidal category can be embedded
into a strict one. Following \cite[p. 420]{Maj2}, this property can
be explained in an equivalent way and used in practice as follows.
By the Pentagon Axiom, the two morphisms from $((X\otimes Y)\otimes Z)\otimes U$
to $X\otimes(Y\otimes(Z\otimes U))$ that can be written as composition
of associativity constraints coincide. The Coherence Theorem, states
that a similar property holds for all tensor products of finitely
many objects in $\cat M$: Two arbitrary compositions of associativity
constraints having the same domain and the same codomain coincide.
Therefore, we can always write $X_{1}\otimes\cdots\otimes X_{n}$
for any object obtained from $X_{1},\ldots,X_{n}$ by using $\otimes$
and brackets. Throughout this paper, to make a choice, we will use
the bracketing from the left to the right, so for any $n$ we have:
\[
X_{1}\otimes\cdots\otimes X_{n}=(X_{1}\otimes X_{2}\ot\cdots\ot X_{n-1})\otimes X_{n}.
\]
In particular, if all objects $X_{i}$ are equal to a given one, say
$X$, then for their tensor product we will use the notation $X^{\ot n}$.
Thus, $X^{\ot n}=X^{\ot n-1}\ot X$.

Also as a consequence of the Coherence Theorem, the morphisms $a$, 
$l$,  $r$ can be omitted in any computation that involves morphisms
in $\cat M$. For simplicity, we will include associativity and unit
constraints in computations only when strictly necessary.

In this paper we will work with $\Bbbk$-linear monoidal categories,
where $\Bbbk$ is a given field. Recall that $(\cat M,\ot,\I,a,l,r)$
is $\Bbbk$-linear if and only if $\M(X,Y)$ is a $\Bbbk$-linear
space, for any objects $X$ and $Y$, and the composition and the
tensor products of morphisms in $\M$ are defined by $\Bbbk$-bilinear
maps.

For more details on monoidal categories,  we refer to \cite{AMS,Ka,McL}.
\end{claim}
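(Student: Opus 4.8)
Since this is Mac Lane's coherence theorem, cited here rather than proved, a proof proposal amounts to laying out its standard two-part structure: a rewriting argument inside the free monoidal category, which yields the ``all formal diagrams of constraints commute'' form, and an explicit strictification, which yields the ``$\cat M$ embeds into a strict monoidal category'' form. Note that the second form implies the first cheaply: if $F\colon\cat M\to\cat N$ is a strong monoidal fully faithful functor into a strict $\cat N$, then any two formal composites of constraints in $\cat M$ with equal source and target are sent (after conjugating by the structure isomorphisms of $F$) to the same identity of $\cat N$, hence coincide by faithfulness. So the plan is to focus on producing the strict category, and to isolate the genuine combinatorial core.

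First I would reduce the commutativity statement to the free case. Fix a set $S$ and form the free monoidal category $\cat F(S)$: its objects are the bracketed words in $S\cup\{\I\}$, and its morphisms are generated by formal instances of $a^{\pm1},l^{\pm1},r^{\pm1}$ subject only to naturality together with the Pentagon and Triangle relations. For any monoidal $\cat M$ and any function from $S$ to the objects of $\cat M$ there is a unique monoidal functor $\cat F(S)\to\cat M$, so any two formal composites of constraints with equal source and target in $\cat M$ are images of parallel morphisms of $\cat F(S)$; it therefore suffices to show $\cat F(S)$ is a preorder, i.e.\ has at most one morphism between any two objects. For this I would read the components of $a$ (rebracket to the left) and of $l,r$ (delete a unit) as one-step rewrite rules on bracketed words and check (i) \emph{termination}: each rule strictly decreases a well-founded measure such as (number of bracket pairs not in left-normal position) $+$ (number of $\I$'s), so every word rewrites to its left-normal form; and (ii) \emph{local confluence}: whenever two rules apply, the outcomes can be joined and, crucially, the two resulting constraint composites agree --- which is exactly what Pentagon, Triangle, naturality, and their standard consequences (e.g.\ $l_\I=r_\I$) supply, and which reduces to a finite check on small words. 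Newman's Lemma then promotes local to global confluence; since all generators are invertible, parallel morphisms of $\cat F(S)$ coincide. This is the form of the claim that justifies the unambiguous notation $X_1\ot\cdots\ot X_n$ used in the paper.

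For the strictification itself I would build $\cat M_{\mathrm{st}}$ with objects the finite lists $(X_1,\dots,X_n)$ of objects of $\cat M$ (the empty list playing the role of $\I$), with $\cat M_{\mathrm{st}}\big((X_i),(Y_j)\big):=\cat M(X_1\ot\cdots\ot X_m,\,Y_1\ot\cdots\ot Y_n)$ for the left bracketing, composition inherited from $\cat M$, and tensor product given on objects by concatenation and on morphisms by $\ot$ in $\cat M$ conjugated by the canonical constraint isomorphisms (well-defined thanks to the previous paragraph); strictness is then immediate. The functor $E\colon\cat M\to\cat M_{\mathrm{st}}$ sending $X$ to the one-element list $(X)$ and acting as the identity on hom-sets is fully faithful and essentially surjective (each list is canonically isomorphic to the one-element list on its tensor product), hence a monoidal equivalence, and it carries $a,l,r$ to identities. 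A slicker route to strictness alone is the embedding $X\mapsto X\ot(-)$ of $\cat M$ into the strict monoidal category of endofunctors of $\cat M$, which is strong monoidal and fully faithful by naturality of $r$; combined with the first paragraph this already gives the ``constraints can be omitted'' conclusion.

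The main obstacle is the unit object. The Pentagon alone handles pure rebracketing painlessly, but the unit-deleting rules overlap with the rebracketing rules, and establishing local confluence of precisely those overlaps is where the Triangle axiom and its less obvious consequences (compatibility of $l,r$ with $a$, and $l_\I=r_\I$) must be invoked, through a handful of genuinely non-trivial small diagrams. Choosing a well-founded measure that is simultaneously compatible with both families of rules, and organizing the morphisms of $\cat F(S)$ modulo naturality so that the confluence analysis stays finite, is where essentially all the work sits; everything else --- checking that $\cat M_{\mathrm{st}}$ really is a strict monoidal category, that $E$ is monoidal, and that it is an equivalence --- is routine bookkeeping.
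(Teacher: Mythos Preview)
You correctly observe at the outset that the paper does not prove this claim: it is an expository passage fixing conventions and citing Mac Lane's theorem, with pointers to \cite{AMS,Ka,McL} for details. There is therefore no ``paper's own proof'' to compare against, and your decision to sketch the standard argument is the only reasonable thing to do here.

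Your outline is sound and follows the well-trodden route: reduce to the free monoidal category on a set, run a terminating locally confluent rewriting system on bracketed words (with Pentagon and Triangle supplying the critical-pair confluence and Newman's Lemma globalizing it), and then strictify either via the category of lists or via the Cayley-style embedding $X\mapsto X\otimes(-)$ into endofunctors. You also correctly flag that the genuine subtlety lives in the unit-deleting rules and their interaction with rebracketing, where one must extract $l_{\I}=r_{\I}$ and the other small compatibility diagrams from the Triangle axiom. One minor imprecision: saying that your functor $E$ ``carries $a,l,r$ to identities'' overstates things slightly --- $E$ is strong monoidal, so its structure isomorphisms mediate between the constraints of $\cat M$ and the (identity) constraints of $\cat M_{\mathrm{st}}$; the constraints of $\cat M$ themselves are not literally sent to identities. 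This does not affect the argument, since full faithfulness of $E$ is what actually delivers the coherence conclusion, but it is worth phrasing carefully.
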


\begin{claim}[Coalgebras]
 All coalgebras that we are interested in are defined over $\Bbbk$.
The category of these coalgebras is denoted by $\cat C$. For a coalgebra
$(C,\Delta,\varepsilon)$ the iterated comultiplications $\De^{p-1}:C\to C^{\ot p}$
are defined recursively such that $\De^{1}:=\De$ and, for $p>1$, 
we have: 
\[
\De^{p}:=\big(\De\ot C^{\ot p-1}\big)\circ\De^{p-1}.
\]
Note that, as consequence of the fact that $\De$ is coassociative,
$\Delta^{p}=\big(C^{\ot(i-1)}\ot\De\ot C^{\ot(p-i)}\big)\circ\De^{p-1}$,
for all $0<i\leq n$. We will use Sweedler's notation
\[
\De^{p-1}(c)=\sum c\r 1\ot\cdots\ot c\r p.
\]
The linear dual of a coalgebra $C$ will be denoted by $C^{*}$. It
is a $\Bbbk$-algebra with respect to the \emph{convolution product},
defined by:
\[
(\alpha*\beta)(c)=\sum\alpha(c\r 1)\beta(c\r 2),
\]
for any $\alpha,\beta\in C^{*}$ and $c\in C$. 

Recall that a \emph{coalgebra filtration} on $C$ is an increasing
exhaustive sequence $\{C_{n}\}_{n\in\N}$ of linear subspaces (i.e.,
$C_{n}\subseteq C_{n+1}$ for all $n$ and $\bigcup_{n\in\N}C_{n}=C$),
that satisfies the relation: 
\[
\Delta(C_{n})\subseteq\sum_{i=0}^{n}C_{i}\ot C_{n-i}.
\]
The \emph{coradical filtration} of a coalgebra $C$ is an important
example of coalgebra filtration. The bottom layer is $C_{0}$, the
\emph{coradical} of $C$, that is the sum of simple subcoalgebras.
Recall that a coalgebra is called \emph{simple} if it has precisely
two subcoalgebras, the trivial one and itself. Note that $0$ is not
a simple coalgebra. The other terms of the coradical filtration are
defined inductively by taking ${C}_{n+1}=\Delta^{-1}({C}\otimes{C}_{n}+{C}_{0}\otimes{C})$, 
for all $n\in\mathbb{N}$. 

Of course, if the dimension of $C$ is $1$, then $C$ is simple and
it is spanned by a unique \emph{group-like element} (i.e., an element
$x\in{C}$ satisfying $\Delta(x)=x\otimes x$ and $\varepsilon_{C}(x)=1$).
The set of group-like elements is denoted by ${G(C)}$. In the case
when $C_{0}$ is generated as a vector space by $G(C)$, the coalgebra
$C$ is called \emph{pointed}. By definition $C$ is \emph{connected}
if and only if $C_{0}$ is $1$-dimensional. For a connected coalgebra
$C$, the unique group-like element is denoted by $x_{C}$.
\end{claim}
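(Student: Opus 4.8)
The statement is, for the most part, a recollection of standard definitions, so the substantive content to be checked reduces to three points: (i) the ``insertion'' description $\De^{p}=\big(C^{\ot(i-1)}\ot\De\ot C^{\ot(p-i)}\big)\circ\De^{p-1}$ for every $0<i\le p$; (ii) the assertion that $(C^{*},*,\eps)$ is an associative unital $\Bbbk$-algebra; and (iii) the assertion that a $1$-dimensional coalgebra is simple and spanned by a unique group-like element. The plan is to dispatch each in turn, each being an immediate consequence of the coassociativity and counit axioms, so that no genuine obstacle arises.

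For (i), I would argue by induction on $p$, the case $p=1$ being vacuous. Assuming the claim for $p-1$, I would apply $\De$ to the first tensor slot of $\De^{p-1}$ and then repeatedly use coassociativity, $(\De\ot C)\circ\De=(C\ot\De)\circ\De$, together with the functoriality of $\ot$ (the interchange law), to transport the newly inserted $\De$ into whichever slot $i$ is prescribed; the left-to-right bracketing convention fixed in \ref{cl:CohThm} keeps the tensor-factor bookkeeping under control. I expect this to be the most tedious step, but it is routine rather than conceptually hard.

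For (ii), bilinearity of $*$ is clear from bilinearity of multiplication in $\Bbbk$; $\eps$ is a two-sided unit because $(\eps\ot C)\circ\De$ and $(C\ot\eps)\circ\De$ both equal the identity map of $C$; and associativity of $*$ is precisely the dual of coassociativity, since evaluating either of $(\alpha*\beta)*\gamma$ and $\alpha*(\beta*\gamma)$ on an element $c\in C$ yields $\sum\alpha(c\r 1)\beta(c\r 2)\gamma(c\r 3)$, where the unambiguity of $\De^{2}$ is guaranteed by point (i).

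For (iii), write $C=\Bbbk x$ with $x\ne0$. The only subspaces of $C$ are $0$ and $C$, so $C$ has exactly the two subcoalgebras $0$ and $C$ and is therefore simple. Since $C\ot C$ is one-dimensional we may write $\De(x)=\l\,(x\ot x)$ and $\eps(x)=\mu$ for scalars $\l,\mu$; the counit axiom forces $\l\mu=1$, hence $\mu\ne0$. Then $g:=\l x$ satisfies $\De(g)=\l^{2}(x\ot x)=g\ot g$ and $\eps(g)=\l\mu=1$, so $g$ is group-like and spans $C$. For uniqueness, any group-like $g'=c\,x$ satisfies $\eps(g')=c\mu=1$, so $c\ne0$, and comparing $\De(g')=c\l\,(x\ot x)$ with $g'\ot g'=c^{2}(x\ot x)$ yields $c\l=c^{2}$, hence $c=\l$ and $g'=g$. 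All told, the verification is purely a matter of unwinding the axioms, with step (i) the only one requiring any care.
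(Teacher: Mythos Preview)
Your verification of the three substantive assertions is correct. However, there is nothing to compare against: the paper presents this ``claim'' purely as a recollection of standard definitions and notation, with no accompanying proof environment; the word \emph{claim} here functions as a numbered paragraph heading rather than a statement to be demonstrated. Your decision to isolate and check points (i)--(iii) is sound, and the arguments you sketch are the standard ones, but the paper simply asserts these as well-known facts and moves on.
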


Graded coalgebras provide important examples of coalgebra filtrations.
A coalgebra $C$ is \emph{graded} if there is a family $\{C^{k}\}_{k\in\N}$
of linear subspaces such that $C=\oplus_{n\in\N}C^{n}$, the restriction
of $\varepsilon_{C}$ to $C_{+}=\oplus_{n>0}C_{n}$ is equal to $0$,
and 
\[
\Delta(C_{n})\subseteq\bigoplus_{i=0}^{n}C^{i}\ot C^{n-i}.
\]
The subspace $C^{n}$ is called the \emph{homogeneous component} of
degree $n$. The component $C^{0}$ is a subcoalgebra, so the canonical
maps $\iota:C^{0}\to C$ and $\lambda:C\to C^{0}$ are coalgebra morphisms. 

For a $p$-tuple of nonnegative integers $I=(i_{1},\dots,i_{p})$,
let $|I|:=\sum_{j=1}^{p}i_{j}$ and $C^{I}:=C^{i_{1}}\ot\cdots\ot C^{i_{p}}$.
If $c\in C^{n}$, then there are unique elements $\Delta_{I}^{p-1}(c)\in C^{I}$
such that $\Delta^{p-1}(c)=\sum_{|I|=n}\Delta_{I}^{p-1}(c)$. We will
use a Sweedler-like notation:
\[
\Delta_{I}^{p-1}(c)=\sum c\r{1,i_{1}}\ot\cdots\ot c\r{p,i_{p}},
\]
where $i_{j}$ indicates that the $j^{\text{th}}$ factor of the tensor
monomial is homogeneous of degree $i_{j}$.

Every graded coalgebra $C=\oplus_{n\in\N}C^{n}$ admits a canonical
filtration $\{C_{\leq n}\}_{n\in\N}$, where $C_{\leq n}:=\sum_{i=0}^{n}C^{i}$.
In the case when this filtration and the coradical filtration of $C$
coincide, we will say that C is \emph{coradically graded}. 
\begin{claim}[The cofree coalgebra over a vector space]
\label{fa:free-coalgebra} Let $V$ be a $\Bbbk$-linear space. Recall
that the cofree coalgebra over $V$ is a coalgebra $F_{V}$ equipped
with a linear map $\pi_{V}:F_{V}\to V$ satisfying the following universal
property: If $D$ is an arbitrary coalgebra and $h:D\to V$ is a morphism
of vector spaces, then there exists a unique coalgebra map $\al V(D)(h):D\to F_{V}$
such that the following diagram is commutative.
\begin{equation}
\begin{array}{c}
\xymatrix{D\ar[rr]^{h}\ar@{..>}[rd]_{\al V(D)(h)} &  & V\\
 & F_{V}\ar[ru]_{\pi_{V}}
}
\end{array}\label{diag:universal}
\end{equation}
Equivalently, the presheaf $\cat V(-,V):\cat C\to\cat S$ is representable,
meaning that it is isomorphic to $\coalg -{\Fv}$. The isomorphism
is given by the natural linear map $\al V:=\{\al V(D)\}_{D\in\cat C}$
and its inverse $\be V:=\{\be V(D)\}_{C\in\cat C}$, where: 
\begin{equation}
\be V(D):\cat C(D,F_{V})\to\cat V(D,V),\quad\be V(D)(\varphi)=\pi_{V}\circ\varphi.\label{eq: beta(C)}
\end{equation}
For the construction of the free coalgebra and an overview of its
properties, see \cite{Haz}.
\end{claim}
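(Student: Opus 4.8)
The statement has two parts: the equivalence between the universal property in diagram \eqref{diag:universal} and the representability of $\cat V(-,V)\colon\cat C\to\cat S$ with the prescribed mutually inverse transformations $\al V,\be V$; and the existence of a cofree coalgebra. The first part is purely formal and I would dispatch it by a Yoneda argument: given $(F_V,\pi_V)$ as in \eqref{diag:universal}, the assignment $\be V(D)(\vf):=\pi_V\circ\vf$ on $\coalg D{F_V}$ is evidently natural in $D$, and the existence/uniqueness clause of the universal property says exactly that $h\mapsto\al V(D)(h)$ is a two-sided inverse of $\be V(D)$ --- the defining identity $\pi_V\circ\al V(D)(h)=h$ gives $\be V(D)\circ\al V(D)=\mr{id}$, and uniqueness gives $\al V(D)\circ\be V(D)=\mr{id}$ --- so $\al V$ is automatically natural as well; conversely, a representing object carries a universal element $\pi_V\in\cat V(F_V,V)$, the image of $\mr{id}_{F_V}$, and chasing naturality recovers \eqref{diag:universal}. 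Thus all the content lies in the \emph{existence} of $(F_V,\pi_V)$.

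For a finite-dimensional $V$ I would give an explicit model as the finite (Sweedler) dual of a free algebra. Put $F_V:=T(V^{*})^{\circ}$, where $T(V^{*})$ is the tensor algebra on $V^{*}$ and $(-)^{\circ}$ denotes the coalgebra of linear functionals vanishing on some cofinite ideal, and let $\pi_V(f):=f|_{V^{*}}$, viewed as an element of $V^{**}=V$. The universal property then drops out of a chain of bijections natural in the coalgebra $D$,
\[
\coalg D{F_V}\;\cong\;\{\text{algebra maps }T(V^{*})\to D^{*}\}\;\cong\;\cat V(V^{*},D^{*})\;\cong\;\cat V(D,V^{**})=\cat V(D,V),
\]
using in turn the adjunction between linear dualisation of coalgebras and the finite-dual functor on algebras, the universal property of the free algebra on the vector space $V^{*}$, the transpose isomorphism of bilinear pairings, and $\dim V<\infty$. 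Tracking $\mr{id}_{F_V}$ along this chain identifies the associated element of $\cat V(F_V,V)$ with the $\pi_V$ above, which is precisely \eqref{diag:universal}.

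For an arbitrary $V$ I would assemble $F_V$ from the finite-dimensional case. Writing $V=\bigcup_W W$ over the directed set of finite-dimensional subspaces, each inclusion $W\subseteq W'$ induces a surjection $W'^{*}\twoheadrightarrow W^{*}$, hence a surjection of free algebras $T(W'^{*})\twoheadrightarrow T(W^{*})$, hence an injective coalgebra morphism $F_W\hookrightarrow F_{W'}$ compatible with the maps $\pi_W$; I would then set $F_V:=\varinjlim_W F_W=\bigcup_W F_W$ and $\pi_V:=\bigcup_W\pi_W$. To verify the universal property one invokes the Fundamental Theorem of Coalgebras: any coalgebra morphism $D\to F_V$ and any linear map $D\to V$ is determined by its restrictions to the finite-dimensional subcoalgebras $D_0\subseteq D$, and each such restriction, having finite-dimensional image, factors through some $F_W$ (respectively some $W$); the finite-dimensional case together with its uniqueness clause makes these factorisations compatible and permits gluing them. (Existence of a right adjoint to the forgetful functor $\cat C\to\cat V$ also follows abstractly, $\cat C$ being locally presentable and the forgetful functor preserving colimits, but the concrete model above is the one the later sections will imitate for the admissible class $\T$.)

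The single genuinely non-formal ingredient, which I would import from \cite{Haz} rather than reprove, is the finite-dual machinery: that $A^{\circ}$ carries a natural coalgebra structure and that $\coalg D{A^{\circ}}\cong\{\text{algebra maps }A\to D^{*}\}$ naturally. The map from coalgebra morphisms to algebra morphisms is a short computation with the convolution product and counits; the reverse map, sending an algebra morphism $\psi$ to $d\mapsto(a\mapsto\psi(a)(d))$, is where the Fundamental Theorem of Coalgebras is unavoidable, since one must check that this functional lies in $A^{\circ}$ --- it is killed by the $\psi$-preimage of the annihilator of the finite-dimensional subcoalgebra generated by $d$, a cofinite ideal of $A$. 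Accordingly I expect the main obstacle to be exactly this bookkeeping: establishing (or correctly citing) the finite-dual adjunction, and then organising the passage to infinite-dimensional $V$; the free-algebra universal property, the transpose isomorphisms, the colimit gluing and the Yoneda step are all routine.
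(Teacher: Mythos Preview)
Your argument is correct, but you have done considerably more than the paper does. The passage labelled \ref{fa:free-coalgebra} is a preliminary recall (note the opening ``Recall that\ldots'') and carries no proof in the paper: the authors state the universal property, record its reformulation as representability together with the mutually inverse natural transformations $\al V$ and $\be V$, and then outsource the actual construction of $(F_V,\pi_V)$ entirely to \cite{Haz}. The Yoneda equivalence you spell out is left implicit, and existence is treated as a black box imported from the literature.

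Your concrete model $F_V=T(V^{*})^{\circ}$ for finite-dimensional $V$, glued along finite-dimensional subspaces via the Fundamental Theorem of Coalgebras for general $V$, is a valid and standard route. One small inaccuracy in your closing parenthetical: the paper's later construction of the $\T$-type cofree coalgebra $\Ftv$ in Theorem~\ref{thm:terminal_coalgebra} does \emph{not} imitate your finite-dual model. Rather, it takes the ordinary cofree coalgebra $F_V$ as given and carves out $\Ftv$ as the sum, inside $F_V$, of the images of all coalgebra maps from $\T$-coalgebras. So the paper's reliance on \cite{Haz} for the existence of $F_V$ is genuine and is not bypassed by any internal argument.
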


\section{The monoidal category \texorpdfstring{$(\M_{C},\protect\ot,\I)$}{MC}\label{sec:TM_C}}

Let $(\M,\ot,\I,a,l,r)$ be a $\Bbbk$-linear monoidal category. In
this section, we construct a new $\Bbbk$-linear category $\M_{C}$
for an arbitrary nonzero coalgebra $C$. Furthermore, if $C$ is cocommutative,
$\M_{C}$ inherits a monoidal structure. Our approach to the deformation
theory of associative algebras relies on studying algebras in $\M_{C}$,
for various choices of $C\in\cat C$.
\begin{claim}[The monoidal category $(\M_{C},\ot,\I,a,l,r)$]
\label{fact:MC} By definition, the objects of $\M_{C}$ coincide
with those of $\M$. However, for objects $X$ and $Y$ in $\M_{C}$,
we let:
\[
\M_{C}(X,Y):=\cat V(C,\M(X,Y)).
\]
The composition of two morphisms $f\in\M_{C}(X,Y)$ and $g\in\M_{C}(Y,Z)$
is defined by: 
\[
(g\ast f)(x):=\sum g(x\r 1)\circ f(x\r 2).
\]
Since $\M$ and $\M_{C}$ are linear monoidal categories, $A:=\cat M(X,X)$
and $\M_{C}(X,X)=\cat V(C,A)$ are associative algebras, with their
products defined by the composition operation in $\M$ and $\M_{C}$,
respectively. By definition, the product in the latter algebra corresponds
to the convolution product $\cat V(C,A)$, cf. \cite[\S1.2]{Mo}.
Analogously, we refer to the composition operation in $\M_{C}$ as
the \emph{convolution composition}. 

Clearly, $\M_{C}$ is a category with respect to the convolution composition.
The identity morphism $\Id X{}:C\to\M(X,X)$ is defined by $\id_{X}(c):=\varepsilon_{C}(c)I_{X}$.
Recall that for the identity of $X$, viewed as object in $\M$, we
use the notation $I_{X}$ or, simply, $X$.

Every morphism of nonzero coalgebras $\iota:C\to\widetilde{C}$ induces
a functor $\iota^{*}:\M_{\widetilde{C}}\to\M_{C}$ such that $\iota^{*}(X)=X$
and $\iota^{*}(f)=f\circ\iota$, for any $f\in\M_{\widetilde{C}}(X,Y)$.
For example, the counit $\varepsilon_{C}:C\to\Bbbk$ is a coalgebra
map. Since $\M_{\Bbbk}$ is canonically isomorphic to $\M$, we can
embed $\M$ into $\M_{C}$ via the faithful functor $\varepsilon_{C}^{*}$.
Note that for any morphism $f$ in $\M$ and $c\in C$ we have $\varepsilon_{C}^{*}(f)(c)=\varepsilon_{C}(c)f$.
We emphasize that the coalgebra $C$ is assumed to be nonzero when
working with the category $\M_{C}$. Thus, $f\to\varepsilon_{C}^{*}(f)$ is
injective indeed. 

In order to define a monoidal structure on $\M_{C}$, we have to assume
that $C$ is cocommutative. The tensor product of two objects and
the unit object in $\M_{C}$ coincide with those in $\M$. The associativity
constraint and the unit constraints in $\M_{C}$ are the morphisms
$\varepsilon_{C}^{*}(a_{X,Y,Z})$, $\varepsilon_{C}^{*}(l_{X})$ and
$\varepsilon_{C}^{*}(r_{X})$, respectively.

The tensor product of two objects and the unit object in $\cat M_{C}$
coincide with those in $\M$. On the other hand, if $f:C\to\M(X,Y)$
and $f':C\to\M(X',Y')$ are morphisms in $\M_{C}$, then we define
the tensor product $f\ot f':C\to\M(X\ot X',Y\ot Y')$ by: 
\[
(f\ot f')(c)=\sum f(c\r 1)\ot f'(c\r 2).
\]
Note that we can define $f\ot f'$ as above, even though $C$ is not
cocommutative. But, in general, this tensor product is not a bifunctor.
However, assuming that $C$ is cocommutative, the relation: 
\[
(f\ot g)*(f'\ot g')=(f*f')\ot(g*g')
\]
holds, for all morphisms such that the convolution composition is
well-defined.

Now we can easily check that the above data define a $\Bbbk$-linear
monoidal structure on $\M_{C}$, and that $\varepsilon_{C}^{*}:\M\to\M_{C}$
is a strict monoidal functor. More generally, if $\iota:C\to\widetilde{C}$
is a morphism of coalgebras, then $\iota^{*}:\M_{\widetilde{C}}\to\M_{C}$
is a strict monoidal functor.
\end{claim}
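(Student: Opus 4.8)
The plan is to verify the axioms layer by layer, pushing the monoidal coherence onto $\M$ via the assignment $\eps_C^{*}$, and isolating the single point where cocommutativity of $C$ is genuinely used. First I would check that $\M_C$ is a $\Bbbk$-linear category: bilinearity of the convolution composition comes from linearity of $\De$ and bilinearity of $\circ$ in $\M$; associativity, $\bigl((h\ast g)\ast f\bigr)(c)=\sum h(c\r{1})\circ g(c\r{2})\circ f(c\r{3})=\bigl(h\ast(g\ast f)\bigr)(c)$, is coassociativity; and $\id_X\ast f=f=f\ast\id_Y$ for $f\in\M_C(X,Y)$ is the counit axiom. Next I would record that $\eps_C^{*}\colon\M\to\M_C$ is a $\Bbbk$-linear functor, the identity on objects, fixing $\I$, sending $\circ$ to $\ast$ and $I_X$ to $\id_X$, and commuting with $\ot$ on objects and on morphisms; each of these is a one-line computation using $\sum\eps_C(c\r{1})\eps_C(c\r{2})=\eps_C(c)$ and the functoriality of $\ot$ on $\M$, and faithfulness for $C\neq0$ has already been noted. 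In particular $\eps_C^{*}(a^{-1})$ is a two-sided $\ast$-inverse of $a^C:=\eps_C^{*}(a)$, so $a^C$, $l^C:=\eps_C^{*}(l)$ and $r^C:=\eps_C^{*}(r)$ are $\ast$-isomorphisms in $\M_C$.

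The main obstacle is showing that $\ot$ is a bifunctor on $\M_C$. Bilinearity on morphisms is immediate and $\id_X\ot\id_{X'}=\id_{X\ot X'}$ follows from $I_X\ot I_{X'}=I_{X\ot X'}$ in $\M$ together with the counit; the real content is the interchange law $(g\ot g')\ast(f\ot f')=(g\ast f)\ot(g'\ast f')$. Expanding both sides at $c\in C$ in Sweedler notation, using coassociativity and the functoriality of $\ot$ on $\M$, the left-hand side becomes $\sum\bigl(g(c\r{1})\circ f(c\r{3})\bigr)\ot\bigl(g'(c\r{2})\circ f'(c\r{4})\bigr)$ and the right-hand side $\sum\bigl(g(c\r{1})\circ f(c\r{2})\bigr)\ot\bigl(g'(c\r{3})\circ f'(c\r{4})\bigr)$, so the two differ precisely by transposing the second and third legs of $\De^{3}(c)$. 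I would close this step with the remark that for a cocommutative coalgebra $\De^{n-1}$ is invariant under every permutation of its $n$ tensor factors: an adjacent transposition of legs $i,i+1$ amounts, through the identity $\De^{n-1}=\bigl(C^{\ot(i-1)}\ot\De\ot C^{\ot(n-i-1)}\bigr)\circ\De^{n-2}$, to replacing $\De$ by $\tau\circ\De=\De$ in one slot, and adjacent transpositions generate the symmetric group; the case $n=4$ is what is needed. This is the only place cocommutativity enters, and the only step calling for a genuine computation.

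With $\ot$ a bifunctor, I would complete the monoidal structure by checking that $a^C,l^C,r^C$ are natural and satisfy the pentagon and triangle axioms. Naturality is verified termwise: evaluating a naturality square of $\M_C$ at $c$, each occurrence of a constraint contributes a scalar $\eps_C(c\r{i})$ that the counit absorbs, leaving exactly the naturality square of $\M$ applied to the morphisms $g(c\r{i}),f(c\r{j}),\dots$ and summed over the Sweedler components. The pentagon and triangle for $\M_C$ are then the images under $\eps_C^{*}$ of those for $\M$, since $\eps_C^{*}$ sends $\circ$ to $\ast$, commutes with $\ot$ on morphisms, preserves identities, and sends $a,l,r$ to $a^C,l^C,r^C$. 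Hence $(\M_C,\ot,\I,a^C,l^C,r^C)$ is a $\Bbbk$-linear monoidal category; and $\eps_C^{*}$, being the identity on objects and fixing $\ot$, $\I$, $a$, $l$, $r$ on the nose, is a strict monoidal functor.

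Finally, for an arbitrary coalgebra morphism $\iota\colon C\to\ti C$ (with both coalgebras cocommutative) I would check directly that $\iota^{*}\colon\M_{\ti C}\to\M_C$, $f\mapsto f\circ\iota$, is a strict monoidal functor: preservation of the convolution composition and of $\ot$ on morphisms follows from $\De_{\ti C}\circ\iota=(\iota\ot\iota)\circ\De_C$; preservation of identities and of the unit constraints follows from $\eps_{\ti C}\circ\iota=\eps_C$; and preservation of objects, of $\I$, and of the associativity constraint is immediate from $\iota^{*}(a^{\ti C})(c)=\eps_{\ti C}(\iota(c))\,a=\eps_C(c)\,a=a^C(c)$, with the same computation for $l$ and $r$. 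The special case $\ti C=\Bbbk$, $\iota=\eps_C$ recovers $\eps_C^{*}$ under the identification $\M_{\Bbbk}\cong\M$.
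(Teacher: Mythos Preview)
Your proof is correct and matches the paper's approach, which simply asserts that ``we can easily check that the above data define a $\Bbbk$-linear monoidal structure on $\M_{C}$'' without giving details; you have supplied precisely the routine verifications the paper omits, and you correctly isolate the interchange law as the one place where cocommutativity is needed. One tiny slip: for $f\in\M_C(X,Y)$ the unit identities should read $\id_Y\ast f=f=f\ast\id_X$, not the other way round.
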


\begin{claim}[{The particular case $C=\Bbbk[t]$}]
\label{fa:k=00005Bt=00005D} Recall that the vector space of polynomials
in one indeterminate $t$ has a canonical structure of connected coalgebra
such that $1$ is a group-like element and, for $n>0$, we have: 
\[
\De(t^{n})=\sum_{i=0}^{n}t^{i}\ot t^{n-i}.
\]
The counit is equal to zero on the subspace spanned by all powers
$t^{n}$,  with $n>0$.

Hence, for any linear monoidal category $\M$, it makes sense to consider
$\M_{\Bbbk[t]}$. Clearly, 
\[
\M_{\Bbbk[t]}(X,Y)\simeq\big\{\sum_{n=0}^{\infty}f_{n}t^{n}\mid f_{n}\in\M(X,Y)\big\},
\]
where the isomorphism maps a morphism $f$ to the formal series $\sum_{n=0}^{\infty}f(t^{n})t^{n}$.
Via this identification, the convolution composition of $f{}=\sum_{n=0}^{\infty}f_{n}t^{n}$
and $f'=\sum_{n=0}^{\infty}f'_{n}t^{n}$ is the formal series 
\[
f\ast f'=\sum_{n\geq0}\big(\sum_{i=0}^{n}f_{i}\circ f_{n-i}'\big)t^{n}.
\]
The formula above is meaningful provided that $f'_{i}\in\M(X,Y)$
and $f_{i}\in\M(Y,Z)$, for some objects $X,Y\text{ and }Z$ in $\M$,
and all $i\in\N$. Similarly, if $g=\sum_{n\geq0}g_{n}t^{n}$ and
$g'=\sum_{n\geq0}g'_{n}t^{n}$, then: 
\[
g\ot g'=\sum_{n\geq0}\big(\sum_{i=0}^{n}g_{i}\ot g_{n-i}'\big)t^{n}.
\]
Of course, $\M_{\Bbbk[t]}$ is a generalization of the category $\cat V_{t}$
mentioned in the introduction and related to the classical deformation
theory. 
\end{claim}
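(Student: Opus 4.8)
The plan is to verify the assertions by routine unwinding of the definitions, the only point carrying real content being the connectedness of $\Bbbk[t]$. On the basis $\{t^{n}\}_{n\in\N}$ the comultiplication is $\Delta(t^{n})=\sum_{i+j=n}t^{i}\ot t^{j}$ and the counit is $\varepsilon(t^{n})=\delta_{n,0}$. I would first check coassociativity on this basis, where it amounts to the identity
\[
(\Delta\ot\id)\Delta(t^{n})=\sum_{a+b+c=n}t^{a}\ot t^{b}\ot t^{c}=(\id\ot\Delta)\Delta(t^{n}),
\]
and the counit axiom, which follows from $(\varepsilon\ot\id)\Delta(t^{n})=\varepsilon(t^{0})\,t^{n}=t^{n}$ and its mirror image. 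Cocommutativity is immediate from the symmetry of $\sum_{i+j=n}t^{i}\ot t^{j}$ in its two tensor factors; this is what is needed in order that $\M_{\Bbbk[t]}$ carry the monoidal structure supplied by \ref{fact:MC}.

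\textbf{Step 2 (connectedness).} I would exhibit the degree filtration $C_{\leq n}:=\bigoplus_{i=0}^{n}\Bbbk t^{i}$. The formula for $\Delta$ gives $\Delta(C_{\leq n})\subseteq\sum_{i=0}^{n}C_{\leq i}\ot C_{\leq n-i}$, so $\{C_{\leq n}\}_{n\in\N}$ is a coalgebra filtration. Since the coradical of a coalgebra is contained in the bottom term of every coalgebra filtration, $C_{0}\subseteq C_{\leq 0}=\Bbbk\cdot 1$; as $1$ is group-like, $\Bbbk\cdot 1$ is a simple subcoalgebra, hence $C_{0}=\Bbbk\cdot 1$ and $\Bbbk[t]$ is connected, with unique group-like element $x_{\Bbbk[t]}=1$. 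The vanishing of $\varepsilon$ on $\bigoplus_{n>0}\Bbbk t^{n}$ is part of the definition of $\varepsilon$.

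\textbf{Step 3 (hom-spaces and the monoidal operations).} For any vector space $W$, evaluating a linear map on the basis $\{t^{n}\}_{n\in\N}$ gives a linear isomorphism $\cat V(\Bbbk[t],W)\to W[[t]]$, $f\mapsto\sum_{n\geq0}f(t^{n})t^{n}$, natural in $W$; taking $W=\M(X,Y)$ yields the stated description of $\M_{\Bbbk[t]}(X,Y)$. Transporting the convolution composition and the tensor product of \ref{fact:MC} along this isomorphism, and writing $\Delta(t^{n})=\sum_{i+j=n}t^{i}\ot t^{j}$ in Sweedler form, I would compute, for $g=\sum g_{n}t^{n}$, $f=\sum f_{n}t^{n}$ and each $n$,
\[
(g\ast f)(t^{n})=\sum g\big((t^{n})\r1\big)\circ f\big((t^{n})\r2\big)=\sum_{i+j=n}g_{i}\circ f_{j},\qquad (g\ot g')(t^{n})=\sum_{i+j=n}g_{i}\ot g'_{j},
\]
which are precisely the asserted coefficientwise formulas; likewise $\id_{X}$ corresponds to $I_{X}\,t^{0}$ since $\varepsilon(t^{n})=\delta_{n,0}$. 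Finally, specializing $\M=\cat V$ and comparing with the definitions recalled in the introduction gives $\cat V_{\Bbbk[t]}=\cat V_{t}$, so $\M_{\Bbbk[t]}$ generalizes $\cat V_{t}$. The main obstacle, such as it is, is Step 2; everything else is bookkeeping with Sweedler notation.
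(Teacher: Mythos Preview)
Your verification is correct. The paper itself offers no proof here: the block labeled \ref{fa:k=00005Bt=00005D} is one of the paper's ``claim'' environments, which function as expository subsections rather than as propositions with proofs; the assertions are simply stated as evident consequences of the definitions in \S\ref{fact:MC}. Your Steps 1 and 3 supply exactly the routine unwinding the paper leaves implicit, and your Step 2 gives a clean justification of connectedness via the degree filtration and the standard fact that the coradical sits inside the bottom term of any coalgebra filtration---a point the paper does not address at all. So there is no discrepancy to report: you have filled in details the authors chose to omit.
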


\begin{claim}[The congruence relation]
Throughout the remaining part of this section we fix a coalgebra
filtration $\{C_{n}\}_{n\in\N}$ on $C$. Let $\iota_{n}$ denote
the inclusion map of $C_{n}$ into $C$. Let $f,g\in\M_{C}(X,Y)$. 

We say that $f$ and $g$ are $n$-\emph{congruent}, and we write
$f{}\equiv_{n}g{}$, if and only if $f-g$ vanishes on $C_{n}$. Denoting
by $\iota_{n}$, the inclusion map of $C_{n}$ into $C$, the morphisms
$f$ and $g$ are $n$-congruent if and only if $f$ and $g$ are
identical as morphisms in $\M_{C_{n}}(X,Y)$, i.e., $\iota_{n}^{*}(f)=\iota_{n}^{*}(g)$.
The following properties of the equivalence relation $\equiv_{n}$
are straightforward to verify, and their proofs are omitted.

Let $g$ and $g'$ be $n$-congruent morphisms in $\M_{C}(Y,Z)$.
If $f{}\in\M_{C}(X,Y)$ and $h\in\M_{C}(Z,U)$, then: 
\begin{equation}
f\ast g\equiv_{n}f\ast g'\qquad\text{and}\qquad g\ast h\equiv_{n}g'\ast h.
\end{equation}
In particular, if $f$ and $g$ are convolution composable and $f\equiv_{n}0$,
then $f\ast g\equiv_{n}0$.

The relation $\equiv_{n}$ is also compatible with the tensor product
and is additive. Specifically, the following hold:
\begin{equation}
f\ot h\equiv_{n}f'\ot h,\qquad f\ot h\equiv_{n}f\ot h'\qquad\text{and}\qquad f+f''\equiv_{n}f'+f'',
\end{equation}
provided that the tensor products and the sums are well-defined, and
$f\equiv_{n}f'$ as well as $h\equiv_{n}h'$.

As a consequence of the above relations, let us show that $f\ot f'\equiv_{n+n'}0$,
for all $f\in\M_{C}(X,Y)$ and $f'\in\M_{C}(X',Y')$ such that $f\equiv_{n}0$
and $f'\equiv_{n'}0$. Indeed, if $i,i',n$ and $n'$ are nonnegative
integers such that $i+i'=n+n'$,  then either $i\leq n$ or $i'\leq n'$.
Since $\{C_{n}\}_{n\in\N}$ is a coalgebra filtration, it follows:
\[
\Delta(C_{n+n'})\subseteq\sum_{i+i'=n+n'}C_{i}\ot C_{i'}\subseteq C_{n}\ot C+C\ot C_{n'}.
\]
Thus $(f\ot f')(C_{n+n'})\subseteq f(C_{n})\ot f'(C)+f(C)\ot f'(C_{n'})=0$. 
Hence, $f\ot f'\equiv_{n+n'}0$.

Similarly, it can be shown that if $g\equiv_{n}0$ and $g'\equiv_{n'}0$,
then $g*g'\equiv_{n+n'}0$, provided $g$ and $g'$ are composable.
\end{claim}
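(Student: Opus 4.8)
The plan is to transcribe, almost verbatim, the argument just given for the tensor product, with the composition of $\M$ in place of $\ot$. I would start from composable morphisms $g'\in\M_{C}(X,Y)$ and $g\in\M_{C}(Y,Z)$ with $g\equiv_{n}0$ and $g'\equiv_{n'}0$ --- that is, $g$ vanishes on $C_{n}$ and $g'$ vanishes on $C_{n'}$ --- and the goal is to prove that $g\ast g'$ vanishes on $C_{n+n'}$.

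The first step reuses the filtration estimate already established above: since $i+i'=n+n'$ forces $i\leq n$ or $i'\leq n'$, the defining inclusion of a coalgebra filtration gives
\[
\Delta(C_{n+n'})\subseteq\sum_{i+i'=n+n'}C_{i}\ot C_{i'}\subseteq C_{n}\ot C+C\ot C_{n'}.
\]
The second step is to note that, because $\M$ is $\Bbbk$-linear, the assignment $a\ot b\mapsto g(a)\circ g'(b)$ is a well-defined $\Bbbk$-linear map $C\ot C\to\M(X,Z)$, and that by the very definition of the convolution composition, $(g\ast g')(c)$ is the image of $\Delta(c)$ under this map. The third step: given $c\in C_{n+n'}$, write $\Delta(c)=u+v$ with $u\in C_{n}\ot C$ and $v\in C\ot C_{n'}$; then the image of $u$ lies in $g(C_{n})\circ g'(C)=0$ and the image of $v$ lies in $g(C)\circ g'(C_{n'})=0$, whence $(g\ast g')(c)=0$, i.e. $g\ast g'\equiv_{n+n'}0$.

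I do not expect a genuine obstacle here: the whole content is the coalgebra-filtration inclusion, and the only point worth a moment's attention is the well-definedness of the bilinear pairing $(a,b)\mapsto g(a)\circ g'(b)$, which is exactly where $\Bbbk$-linearity of $\M$ (hence $\Bbbk$-bilinearity of its composition) enters. The rest is identical to the tensor-product case.
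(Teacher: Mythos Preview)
Your argument is correct and is exactly what the paper intends by ``Similarly'': you reuse the filtration inclusion $\Delta(C_{n+n'})\subseteq C_{n}\ot C+C\ot C_{n'}$ and replace the bilinear map $(a,b)\mapsto g(a)\ot g'(b)$ by $(a,b)\mapsto g(a)\circ g'(b)$, which is well-defined precisely because composition in the $\Bbbk$-linear category $\M$ is $\Bbbk$-bilinear. There is nothing to add.
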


\begin{lem}
\label{le:computation_f} Let $\phi:=\iota\circ\lambda$, where $\lambda$
is a retract of $\iota$ (i.e., a linear left inverse of $\iota$). 
\begin{enumerate}
\item[\emph{(a)}]  If $c\in C_{n+1}$ and $\phi_{i}:=\phi{}^{\ot(i-1)}\ot C\ot\phi{}^{\ot(p-i)}$,
for all $i$, then the image of $\sum_{i=1}^{p}\phi_{i}\circ\Delta^{p-1}-\Delta^{p-1}$
is included into $C_{n}^{\ot p}$.
\item[\emph{(b)}]  If $\Theta:C^{\ot p}\to W$ is a $\Bbbk$-linear map that vanishes on $C_{n}^{\ot p}$, then
\begin{equation}
\Theta\circ\Delta^{p-1}\equiv_{n+1}\sum_{i=1}^{p}\Theta\circ\phi_{i}\circ\Delta^{p-1}.\label{eq:Theta}
\end{equation}
\end{enumerate}
\end{lem}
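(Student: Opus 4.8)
The plan is to prove part (a) by decomposing $\Delta^{p-1}(c)$ according to the fixed filtration, and then to deduce part (b) formally. First I record the properties of $\phi$ that will be used: since $\lambda\circ\iota=\mathrm{id}$, the endomorphism $\phi=\iota\circ\lambda$ of $C$ is idempotent, has image $C_n$, and restricts to the identity on $C_n$, hence also on $C_0\subseteq C_n$. Next, iterating the defining inclusion $\Delta(C_m)\subseteq\sum_{i+j=m}C_i\otimes C_j$ of a coalgebra filtration — a routine induction on $p$ via $\Delta^{p-1}=(\Delta\otimes C^{\otimes(p-2)})\circ\Delta^{p-2}$ — yields
\[
\Delta^{p-1}(C_{n+1})\subseteq\sum_{i_1+\cdots+i_p=n+1}C_{i_1}\otimes\cdots\otimes C_{i_p}.
\]

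The key combinatorial observation is a dichotomy for multi-indices with $i_1+\cdots+i_p=n+1$: either every $i_k\leq n$, and then $C_{i_1}\otimes\cdots\otimes C_{i_p}\subseteq C_n^{\otimes p}$; or exactly one index $i_k$ equals $n+1$ and all the others vanish, and then $C_{i_1}\otimes\cdots\otimes C_{i_p}\subseteq C_0^{\otimes(k-1)}\otimes C_{n+1}\otimes C_0^{\otimes(p-k)}$. Grouping the summands accordingly, for $c\in C_{n+1}$ one may write
\[
\Delta^{p-1}(c)=x+\sum_{k=1}^{p}y_k,\qquad x\in C_n^{\otimes p},\quad y_k\in C_0^{\otimes(k-1)}\otimes C_{n+1}\otimes C_0^{\otimes(p-k)}.
\]

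I would then evaluate each $\phi_j$ on the pieces. Using the properties of $\phi$ one checks directly that $\phi_j(x)=x$ for every $j$; that $\phi_k(y_k)=y_k$ (the $k$-th tensor factor is left alone and the remaining factors lie in $C_0$, where $\phi$ is the identity); and that $\phi_j(y_k)\in C_n^{\otimes p}$ whenever $j\neq k$ (now $\phi$ is applied to the $k$-th factor, sending $C_{n+1}$ into $C_n$, while the $C_0$-factors remain in $C_0\subseteq C_n$). Summing over $j$ gives
\[
\sum_{j=1}^{p}\phi_j\big(\Delta^{p-1}(c)\big)=p\,x+\sum_{k=1}^{p}y_k+z,\qquad z:=\sum_{k=1}^{p}\sum_{j\neq k}\phi_j(y_k)\in C_n^{\otimes p},
\]
and subtracting $\Delta^{p-1}(c)=x+\sum_k y_k$ leaves $(p-1)x+z\in C_n^{\otimes p}$, which is exactly (a). Part (b) is then immediate: by (a) the linear map $\sum_{i=1}^{p}\phi_i\circ\Delta^{p-1}-\Delta^{p-1}$ sends $C_{n+1}$ into $C_n^{\otimes p}\subseteq\Ker\Theta$, so post-composing with $\Theta$ annihilates $C_{n+1}$ — precisely the congruence \eqref{eq:Theta}.

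The main obstacle is modest: there is no hard homological or analytic input, and the whole content lies in the combinatorial dichotomy for the multi-indices (all $\leq n$, versus one equal to $n+1$ and the rest equal to $0$) together with careful bookkeeping of which tensor factor each $\phi_j$ leaves untouched. Once the decomposition $\Delta^{p-1}(c)=x+\sum_k y_k$ is in hand, the remaining computation is short and formal.
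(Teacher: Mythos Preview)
Your argument follows exactly the same route as the paper's: decompose $\Delta^{p-1}(c)$ into a piece in $C_n^{\otimes p}$ plus pieces of type $C_0^{\otimes(k-1)}\otimes C_{n+1}\otimes C_0^{\otimes(p-k)}$, then track what each $\phi_j$ does to each piece.

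There is, however, one slip. In this lemma $\iota$ is the inclusion of $C_0$ into $C$ (the paper's proof uses ``the image of $\phi$ is equal to $C_0$'', and every later application of the lemma needs exactly this), not the inclusion of $C_n$. So your assertion that $\phi$ ``has image $C_n$ and restricts to the identity on $C_n$'' is incorrect, and consequently $\phi_j(x)=x$ is false in general for $x\in C_n^{\otimes p}$. The repair is immediate: since $\mathrm{Im}\,\phi=C_0\subseteq C_n$, one still has $\phi_j(x)\in C_n^{\otimes p}$ for every $j$, and that is all you need---after subtracting $\Delta^{p-1}(c)$ the $\sum_k y_k$ terms cancel as you wrote, and what remains is $\big(\sum_j\phi_j(x)-x\big)+z\in C_n^{\otimes p}$. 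This is precisely how the paper handles the term $c'\in C_n^{\otimes p}$. With that correction your proof and the paper's coincide.
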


\begin{proof}
Since $\{C_{n}\}_{n\in\N}$ is a coalgebra filtration, we have $\Delta^{p-1}(c)=\sum_{j=1}^{p}c^{j}+c'$, 
where $c^{j}\in C_{0}^{\ot(j-1)}\ot C_{n+1}\ot C_{0}^{\ot(p-j)}$
and $c'\in C_{n}^{\ot p}$. Therefore,

\[
\sum_{i=1}^{p}\phi_{i}\Delta^{p-1}(c)=\sum_{i=1}^{p}\sum_{j=1}^{p}\phi_{i}(c^{j})+\sum_{i=1}^{p}\phi_{i}(c')=\sum_{i=1}^{p}\phi_{i}(c^{i})+\sum_{i\neq j}\phi_{i}(c^{j})+\sum_{i=1}^{p}\phi_{i}(c').
\]
Since $\phi(x)=x$,  for all $x\in C_{0}$, we get $\phi_{i}(c^{i})=c^{i}$.
Thus,
\[
\sum_{i=1}^{p}\phi_{i}\Delta^{p-1}(c)-\Delta^{p-1}(c)=\sum_{i=1}^{p}\phi_{i}\Delta^{p-1}(c)-\sum_{i=1}^{p}\phi_{i}(c^{i})-c'=\sum_{i\neq j}\phi_{i}(c^{j})+\sum_{i=1}^{p}\phi_{i}(c')-c'.
\]
The image of $\phi$ is equal to $C_{0}$,  so $\phi_{i}(c^{j})\in C_{0}^{\ot p}$,
for any $i\neq j$. Similarly, since $C_{0}\subseteq C_{n}$, it follows
that $\phi_{i}(c')$ is an element in $C_{n}^{\ot p}$ for any $i$.
Thus part (a) is proved. The second assertion is obvious, considering
the definition of the congruence relation.
\end{proof}
\begin{rem}
\label{re:Theta} For any $c\in C_{n+1}$, the relation \eqref{eq:Theta}
can be equivalently written as follows:
\begin{equation}
\Theta(\sum c\r 1\ot\cdots\ot c\r p)=\sum_{i=1}^{p}\Theta\big(\sum\phi(c\r 1)\ot\cdots\ot\phi(c\r{i-1})\ot c\r i\ot\phi(c\r{i+1})\ot\cdots\ot\phi(c\r p)\big)\label{eq:Theta'}.
\end{equation}
\end{rem}

\begin{claim}
\label{fa:iTakeuchi Lemma} Takeuchi Lemma states that a linear map
$f$ from a $\Bbbk$-coalgebra $D$ to a $\Bbbk$-algebra $A$ is
invertible in convolution if and only if the restriction of $f$ to
the coradical of $D$ is invertible. We need a slight improvement
of this result for the case where $f$ is a morphism in $\M_{D}(X,Y)$
and the coradical of $D$ is replaced with the bottom term of a coalgebra
filtration $\{D_{n}\}_{n\in\N}$ on $D$.

First of all, adapting the proof of \cite[Lemma 5.2.10]{Mo}, for
$f$ as above, let us show that $f'':=f\res{D''}{}$ is invertible
in $\M_{D''}$ if and only if $f':=f\res{D'}{}$ is invertible in
$\M_{D'}$. Here $D'\subseteq D''$ are subcoalgebras of $D$ such
that
\begin{equation}
\Delta(D'')\subseteq D'\ot D''+D''\ot D'.\label{eq:ext}
\end{equation}
The direct implication is straightforward. For the converse, let us
assume that $f'$ is invertible in $\M_{D'}$. Let $g'$ be the inverse
of $f'$. We pick a linear map $g'':D''\to\M(Y,X)$ which extends
$g'$ and then we set $h:=\Id Y{}-f''*g''\in\M_{D''}(Y,Y)$. Clearly,
$h\res{D'}{}$ is equal to zero since $f'$ and $g'$ are inverses
each other and $g''$ extends $g'$. Therefore, in view of Equation
\eqref{eq:ext}, it follows that $h*h=0$. Thus $\Id Y{}+h$ is the
inverse of $f''*g''=\Id Y{}-h$. In conclusion, $g''*(\Id Y{}+h)$
is a right inverse of $f''$ in $\M_{D''}$. Similarly, it can be
shown that $f''$ is left invertible in $\M_{D''}$.

Next, we consider a coalgebra filtration $\{D_{n}\}_{n\in\N}$ on
$D$ such that $f\res{D_{0}}{}$ is invertible in $\M_{D_{0}}$. From
the definition of a coalgebra filtration, Equation \eqref{eq:ext}
holds for $D':=D_{n}$ and $D'':=D_{n+1}$. Hence, by induction, $f\res{D_{n}}{}$
is invertible in $\M_{D_{n}}$, for all $n\in\N$. Therefore, we conclude
that $f$ is invertible.

We refer to the result that we have just proved still as Takeuchi
Lemma.
\end{claim}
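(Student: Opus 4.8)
The statement to be established is the monoidal-category form of Takeuchi's lemma: if $f$ is a morphism in $\M_{D}(X,Y)$ and $\{D_{n}\}_{n\in\N}$ is a coalgebra filtration on $D$ such that $f\restr{D_{0}}$ is invertible in $\M_{D_{0}}$, then $f$ is invertible in $\M_{D}$. The plan is to revisit the classical proof (cf. \cite[Lemma 5.2.10]{Mo}), re-read inside the convolution monoids $\M_{D_{n}}(Y,Y)=\cat V(D_{n},\M(Y,Y))$, organised into three stages: an inductive step, the observation that a coalgebra filtration feeds that step, and a gluing argument.

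First I would isolate the inductive step as a claim about a pair of subcoalgebras $D'\subseteq D''$ of $D$ satisfying the ``extension'' inclusion $\De(D'')\subseteq D'\ot D''+D''\ot D'$: namely that $f\restr{D''}$ is invertible in $\M_{D''}$ if and only if $f\restr{D'}$ is invertible in $\M_{D'}$. The forward implication is immediate, since restriction along the subcoalgebra inclusion $D'\hookrightarrow D''$ preserves convolution composition and sends identity morphisms to identity morphisms, hence carries a two-sided inverse of $f\restr{D''}$ to one of $f\restr{D'}$. For the converse, let $g'$ be the inverse of $f':=f\restr{D'}$, choose any linear extension $g'':D''\to\M(Y,X)$ of $g'$, and put $h:=\id_{Y}-f\restr{D''}*g''\in\M_{D''}(Y,Y)$. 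Because $g''$ extends $g'$, the morphism $h$ vanishes on $D'$; together with $\De(D'')\subseteq D'\ot D''+D''\ot D'$ this yields $h*h=0$ after a short Sweedler computation — writing $\De(d)$, for $d\in D''$, as a sum of tensors each of which has a leg in $D'$, and applying $h\restr{D'}=0$ to that leg. Then $\id_{Y}+h$ is a two-sided inverse of $f\restr{D''}*g''=\id_{Y}-h$, so $g''*(\id_{Y}+h)$ is a right inverse of $f\restr{D''}$; a symmetric argument starting from $\id_{X}-g''*f\restr{D''}\in\M_{D''}(X,X)$ produces a left inverse, and a morphism admitting both a left and a right inverse is invertible.

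Next I would check that a coalgebra filtration supplies exactly such pairs. For each $n$ one has $\De(D_{n+1})\subseteq\sum_{i=0}^{n+1}D_{i}\ot D_{n+1-i}\subseteq D_{n}\ot D_{n+1}+D_{n+1}\ot D_{n}$, since each summand with $i\le n$ lies in $D_{n}\ot D_{n+1}$ while the summand with $i=n+1$ lies in $D_{n+1}\ot D_{0}\subseteq D_{n+1}\ot D_{n}$. Hence, taking $D'=D_{n}$ and $D''=D_{n+1}$ in the first stage and starting from the hypothesis that $f\restr{D_{0}}$ is invertible in $\M_{D_{0}}$, an induction on $n$ shows that $f\restr{D_{n}}$ is invertible in $\M_{D_{n}}$ for every $n$; write $g_{n}$ for its inverse.

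Finally I would glue the $g_{n}$. Each $D_{n}$ is a subcoalgebra of $D_{n+1}$, so $g_{n+1}\restr{D_{n}}$ is again a two-sided convolution inverse of $f\restr{D_{n}}$ in $\M_{D_{n}}$ and hence equals $g_{n}$ by uniqueness of two-sided inverses; thus $\{g_{n}\}_{n\in\N}$ is a coherent family, and since the filtration is exhaustive ($\bigcup_{n}D_{n}=D$) it assembles into a single linear map $g:D\to\M(Y,X)$ with $g\restr{D_{n}}=g_{n}$. That $g*f=\id_{X}$ and $f*g=\id_{Y}$ is then a pointwise verification, because any $d\in D$ belongs to some $D_{n}$, where the identities already hold. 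I expect the one genuinely substantive point to be the vanishing $h*h=0$ in the first stage — everything else is formal category-and-monoid manipulation — and even that reduces, as indicated, to a brief computation once the role of the extension inclusion $\De(D'')\subseteq D'\ot D''+D''\ot D'$ is made explicit; so I do not anticipate a real obstacle beyond keeping careful track of the identity morphisms $\id_{X}(c)=\eps_{D}(c)I_{X}$ while manipulating convolution products.
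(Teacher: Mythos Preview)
Your proposal is correct and follows essentially the same route as the paper: the same extension-pair lemma with the same $h:=\id_Y-f''*g''$ trick and $h*h=0$ argument, then induction along the filtration. The only difference is that you spell out the final gluing step (coherence of the $g_n$ via uniqueness of inverses and exhaustiveness of the filtration), which the paper leaves implicit in the sentence ``Therefore, we conclude that $f$ is invertible.''
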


\begin{claim}[Invertible morphisms in $\M_{C}$]
\label{claim:inverse_Id+f}Let $f\in\M_{C}(X,X)$ be a morphism such
that $f\equiv_{n}0$, for some $n$. By Takeuchi Lemma, $\id_{X}+f$
is invertible in $\M_{C}$, since $\id_{X}+f\equiv_{0}\id_{X}$. We
claim that,
\begin{equation}
(\id_{X}+f)^{-1}\equiv_{n+1}\id_{X}-f.\label{eq:u_inverse}
\end{equation}
Indeed, let $g$ denotes the inverse of $\id_{X}+f$. Equation \eqref{eq:u_inverse}
is equivalent with $\Id X{}-\iota_{n+1}^{*}(f)=\iota_{n+1}^{*}(g)$.
On the other hand, $\Id X{}-f=(\Id X{}-f*f)*g$. Applying the functor
$\iota_{n+1}^{*}$ to both side of the previous identity, we obtain:
\[
\Id X{}-\iota_{n+1}^{*}(f)=(\Id X{}-\iota_{n+1}^{*}(f*f))*\iota_{n+1}^{*}(g).
\]
Therefore, it is sufficient to verify that $f*f\equiv_{n+1}0$. First
let us consider the case where $n=0$. 

We keep the notation introduced in Lemma \ref{le:computation_f}.
In particular, $\lambda$ is a retract of $\iota$, and
$\phi=\iota\circ\lambda$. Let $\Theta:C\ot C\to\M_{C}(X,X)$ be the
map defined by $\Theta(c\ot c')=f(c)\circ f(c')$. For $c\in C_{1}$,
Equation \eqref{eq:Theta'} yields us: 
\[
(f*f)(c)=\Theta(\Delta(c))=\sum f(\phi(c\r 1))\circ f(c\r 2)+\sum f(c\r 1)\circ f(\phi(c\r 2))=0,
\]
where for deriving the last equation we used that $f$ vanishes on
$C_{0}$. 

Let $n>0$.  By definition of coalgebra filtrations, $\Delta(C_{2n})\subseteq\sum_{i=0}^{2n}C_{i}\ot C_{2n-i}$.
Either $i$ or $2n-i$ is less than or equal to $n$. Since $f$ is
trivial on $C_{n}$, it follows that $f*f\equiv_{2n}0$. Thus$f*f\equiv_{n+1}0$.

Note that, if $g$ is invertible in $\M_{C}(X,Y)$ and $f\equiv_{n}0$,
then we have:
\[
(g+f)^{-1}\equiv_{n+1}g^{-1}-g^{-1}*f*g^{-1}.
\]
Indeed, $g+f=g*(\Id X{}+g^{-1}*f)$ and $g^{-1}*f\equiv_{n}0$, so
we can apply Equation \eqref{eq:u_inverse}.
\end{claim}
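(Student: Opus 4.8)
The plan is to reduce the whole statement to the single congruence $f\ast f\equiv_{n+1}0$ and to obtain everything else formally, using Takeuchi Lemma (\ref{fa:iTakeuchi Lemma}), the fact that $\iota_{n+1}^{\ast}$ is a strict monoidal functor, and the compatibility of the congruences $\equiv_{k}$ with convolution composition. Invertibility of $\id_{X}+f$ in $\M_{C}$ is immediate: since $C_{0}\subseteq C_{n}$ we have $f\equiv_{0}0$, hence $\id_{X}+f\equiv_{0}\id_{X}$, and $\id_{X}$ restricts in $\M_{C_{0}}$ to the identity morphism of $X$, which is invertible; Takeuchi Lemma applied to the filtration $\{C_{n}\}_{n}$ then produces an inverse $g\in\M_{C}(X,X)$ of $\id_{X}+f$.

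Next, to prove $g\equiv_{n+1}\id_{X}-f$, I would rephrase this as the equality $\iota_{n+1}^{\ast}(g)=\id_{X}-\iota_{n+1}^{\ast}(f)$ in $\M_{C_{n+1}}$, where $\iota_{n+1}$ denotes the inclusion of $C_{n+1}$ into $C$. From the convolution identity $(\id_{X}+f)\ast(\id_{X}-f)=\id_{X}-f\ast f$ one gets $\id_{X}-f=(\id_{X}-f\ast f)\ast g$; applying $\iota_{n+1}^{\ast}$, which turns convolution composition into convolution composition, gives $\id_{X}-\iota_{n+1}^{\ast}(f)=(\id_{X}-\iota_{n+1}^{\ast}(f\ast f))\ast\iota_{n+1}^{\ast}(g)$. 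Since $\iota_{n+1}^{\ast}$ kills precisely the morphisms that are $\equiv_{n+1}0$, once $f\ast f\equiv_{n+1}0$ is known this identity collapses to $\iota_{n+1}^{\ast}(g)=\id_{X}-\iota_{n+1}^{\ast}(f)$, which is exactly what is wanted.

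It therefore remains to establish $f\ast f\equiv_{n+1}0$, the only point where the coalgebra filtration is genuinely used. Observe that $(f\ast f)(c)=\Theta(\Delta c)$ for the linear map $\Theta:C\ot C\to\M(X,X)$, $\Theta(c\ot c')=f(c)\circ f(c')$, so it suffices to show $\Theta$ vanishes on $\Delta(C_{n+1})$. By the filtration axiom $\Delta(C_{n+1})\subseteq\sum_{i+j=n+1}C_{i}\ot C_{j}$, and for each such pair at least one of $i,j$ does not exceed $n$ (otherwise $i+j\ge 2n+2$); since $f\equiv_{n}0$, the map $f\ot f$, hence $\Theta$, sends each summand $C_{i}\ot C_{j}$ to zero. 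Thus $\Theta(\Delta(C_{n+1}))=0$, i.e. $f\ast f\equiv_{n+1}0$. (Equivalently, in the notation of Lemma \ref{le:computation_f}, with $\phi=\iota\circ\lambda$ and $\lambda$ a retract of $\iota$: $\Theta$ vanishes on $C_{n}^{\ot 2}$, so Remark \ref{re:Theta} rewrites, for $c\in C_{n+1}$, $(f\ast f)(c)=\sum f(\phi(c\r 1))\circ f(c\r 2)+\sum f(c\r 1)\circ f(\phi(c\r 2))$, and every term vanishes because $\phi$ maps into $C_{0}\subseteq C_{n}$.)

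Finally, the generalized formula for an invertible $g\in\M_{C}(X,Y)$ is a formal consequence: write $g+f=g\ast(\id_{X}+g^{-1}\ast f)$; since $\equiv_{n}$ is compatible with convolution and $f\equiv_{n}0$, we have $g^{-1}\ast f\equiv_{n}0$, so the case already proved, applied to $g^{-1}\ast f$, gives $(\id_{X}+g^{-1}\ast f)^{-1}\equiv_{n+1}\id_{X}-g^{-1}\ast f$; composing on the right with $g^{-1}$ and using once more the compatibility of $\equiv_{n+1}$ with convolution yields $(g+f)^{-1}=(\id_{X}+g^{-1}\ast f)^{-1}\ast g^{-1}\equiv_{n+1}g^{-1}-g^{-1}\ast f\ast g^{-1}$. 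The Takeuchi-Lemma step and the convolution bookkeeping are routine; the only place carrying real content is the congruence $f\ast f\equiv_{n+1}0$ — short once phrased as $\Theta(\Delta(C_{n+1}))=0$, but it is the heart of the argument, and it is precisely there that one must exploit the filtration structure, namely that in every term one tensor factor lands in a layer on which $f$ vanishes.
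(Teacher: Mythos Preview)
Your proof is correct and follows essentially the same route as the paper: reduce \eqref{eq:u_inverse} to $f\ast f\equiv_{n+1}0$ via the identity $\id_{X}-f=(\id_{X}-f\ast f)\ast g$ and the functor $\iota_{n+1}^{\ast}$, then exploit the filtration, and finally deduce the $(g+f)^{-1}$ formula by writing $g+f=g\ast(\id_{X}+g^{-1}\ast f)$. The one noteworthy difference is in the handling of the key step $f\ast f\equiv_{n+1}0$: the paper splits into the case $n=0$ (treated via Lemma~\ref{le:computation_f} and Remark~\ref{re:Theta}) and the case $n>0$ (where it proves the stronger $f\ast f\equiv_{2n}0$ using $\Delta(C_{2n})$), whereas you give a single uniform argument directly on $\Delta(C_{n+1})\subseteq\sum_{i+j=n+1}C_{i}\ot C_{j}$, observing that in every summand at least one index is $\le n$. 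Your version is slightly cleaner and avoids the case distinction; the paper's $n>0$ argument yields a marginally sharper congruence ($\equiv_{2n}$ rather than $\equiv_{n+1}$), but that extra strength is never used.
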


\section{Some constructions in the category of coalgebras}

Cofree coalgebras play a central role in our work. In this section,
we establish the existence of such coalgebras within certain suitable
categories of coalgebras. Subsequently, we use them to prove the representability
of several presheaves associated with factorizations of a linear map
and deformations of associative algebras.
\begin{defn}
\label{def:T}Recall that $\cat C$ denotes the category of $\Bbbk$-coalgebras.
A class $\T\subseteq\cat C$ is called \emph{admissible} if it contains
$\Bbbk$ (with the canonical coalgebra structure) and satisfies the
following two conditions. 
\end{defn}

\begin{enumerate}
\item[$(\dagger)$] $\T$ is closed under quotients, that is any homomorphic image of
a coalgebra in $\T$ belongs to $\T$. 
\item[$(\ddagger)$] If $\{C^{i}\}_{i}\subseteq\T$ is a family of subcoalgebras of an
arbitrary coalgebra (not necessarily in $\T$), then $\sum_{i}C^{i}\in\T$. 
\end{enumerate}
We say that $C$ is a $\T$\emph{-coalgebra} or \emph{of type} $\T$,
whenever $C\in\T$. Unless explicitly stated otherwise, the coalgebras
we consider are of type $\T$, for some given admissible class $\T$.
We regard $\T$ as a full subcategory of $\cat C$.
\begin{rem}
\label{rem:direct_sum=00003DT-type} (a) If $C$ is an arbitrary nonzero
coalgebra, then $\Bbbk$ is equal to the image of $\varepsilon_{C}$.
Thus, in the definition of admissible classes we can replace the requirement
that $\Bbbk$ is in $\T$ with the assumption that there exists a
nonzero coalgebra in $\T$. 

(b) Under the assumption that ($\dagger)$ is true, condition ($\ddagger)$
is equivalent to the fact that $\T$ is closed under direct sums.
Indeed, let $\varphi^{i}$ denote the canonical inclusion of $C^{i}$
into $\oplus_{i}C^{i}$. Then $\varphi^{i}(C^{i})$ is in $\T$, cf.
$(\dagger)$. Hence, supposing that ($\ddagger)$ holds, it follows
that $\oplus_{i}C^{i}=\sum_{i}\varphi^{i}(C^{i})$ is in $\T$. The
converse is also true, since $\sum_{i}C^{i}$ is a quotient of $\oplus_{i}C^{i}$.

(d) If necessary, we can add to an admissible class the coalgebra
$0$. Indeed, if $\T$ is admissible, then $\T\bigcup\{0\}$ is also
admissible.
\end{rem}

\begin{example}
\noindent \label{exa:T-type}Obviously, the category of all coalgebras
and the category of cocommutative coalgebras are admissible. Another
example is the category of pointed coalgebras. Indeed, by \cite[Proposition 8.0.3]{Sw},
the sum of a family of pointed coalgebras is pointed, so $(\ddagger)$
holds true. The condition $(\dagger)$ follows by \cite[Corollary 5.3.5]{Mo}.
If necessary, we can assume that $0$ belongs to this class, though
$0$ is not pointed. 

\noindent Note that $\T'\bigcap\T''$ is admissible, provided that
both $\T'$ and $\T''$ are admissible. For example, if $\T$ is admissible,
then the class of cocommutative and pointed coalgebras in $\T$ are
admissible too. 
\end{example}

Our goal now is to prove the existence of a cofree coalgebra in $\T$
over a vector space $V$, and then to discuss some application of
this result.
\begin{thm}
\label{thm:terminal_coalgebra} For a given vector space $V$, there
exists a cofree coalgebra $(\Ftv,\pv)$ in $\T$. In other words,
there is a $\T$-coalgebra $\Ftv$ which represent the presheaf $\cat V(-,V):\T\to\cat S$.
\end{thm}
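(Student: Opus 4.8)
The plan is to construct $\Ftv$ as a suitable subcoalgebra of the ordinary cofree coalgebra $\Fv$ from \ref{fa:free-coalgebra}, namely the largest subcoalgebra of $\Fv$ that happens to lie in $\T$. Concretely, I would let $\Ftv:=\sum\{D\mid D\text{ is a subcoalgebra of }\Fv\text{ with }D\in\T\}$, and take $\pv$ to be the restriction of $\pi_V$ to $\Ftv$. The sum is nonempty since $\Bbbk\in\T$ and $\Fv$ has at least one group-like element (assuming $\Fv\neq0$; the degenerate case $V=0$ forces $\Fv=0$ or handling via the convention that $0\in\T$ when needed, as in Remark \ref{rem:direct_sum=00003DT-type}(d)), and by condition $(\ddagger)$ the sum $\Ftv$ is itself a coalgebra of type $\T$. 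So $\Ftv$ is the unique maximal $\T$-subcoalgebra of $\Fv$.

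\textbf{Verifying the universal property.} I would then check that $(\Ftv,\pv)$ satisfies the universal property of \ref{fa:free-coalgebra} restricted to the subcategory $\T$. Given a $\T$-coalgebra $D$ and a linear map $h:D\to V$, the ordinary cofree property gives a unique coalgebra map $\al V(D)(h):D\to\Fv$ with $\pi_V\circ\al V(D)(h)=h$. The point is that its image $\al V(D)(h)(D)$ is a homomorphic image of a coalgebra in $\T$, hence belongs to $\T$ by $(\dagger)$; being a subcoalgebra of $\Fv$ that lies in $\T$, it is contained in $\Ftv$ by maximality. Therefore $\al V(D)(h)$ corestricts to a coalgebra map $D\to\Ftv$, and this corestriction composed with $\pv$ still equals $h$. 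Uniqueness is inherited directly: any coalgebra map $D\to\Ftv$ with the required property is, after composing with the inclusion $\Ftv\hookrightarrow\Fv$, a coalgebra map $D\to\Fv$ lifting $h$, hence equals $\al V(D)(h)$ by the uniqueness in $\Fv$. Reformulating, the presheaf $\cat V(-,V):\T\to\cat S$ is represented by $\Ftv$, with the natural isomorphism given by corestricting $\al V$ and with inverse $\varphi\mapsto\pv\circ\varphi$, exactly as in \eqref{eq: beta(C)}.

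\textbf{Main obstacle.} The one genuinely non-formal ingredient is knowing that the ordinary cofree coalgebra $\Fv$ exists at all and that it is large enough for all the subcoalgebras we produce to fit inside it; this I would simply cite from \cite{Haz} as recalled in \ref{fa:free-coalgebra}, so no real work is needed there. The subtle point to state carefully is the degenerate case $V=0$: then $\Fv=0$, which is not in $\T$ in general (e.g.\ for pointed coalgebras), so one should either invoke the convention of Remark \ref{rem:direct_sum=00003DT-type}(d) that $0$ may be adjoined to $\T$, or observe that the only $\T$-coalgebra admitting a linear map to $0$ is $0$ itself, so the empty sum construction still represents the presheaf correctly once $0$ is allowed. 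Apart from this bookkeeping, the whole argument is a one-line application of the closure conditions $(\dagger)$ and $(\ddagger)$ to the classical cofree coalgebra, so I expect no serious difficulty.
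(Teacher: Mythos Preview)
Your approach is essentially the same as the paper's: both carve $\Ftv$ out of the ordinary cofree coalgebra $\Fv$ as a sum of $\T$-subcoalgebras, invoking $(\dagger)$ to see that images of maps from $\T$-coalgebras land in $\T$ and $(\ddagger)$ to see that the sum stays in $\T$. The only cosmetic difference is that the paper indexes the sum over images $\Im\varphi_h$ of the universal lifts $\varphi_h:=\al V(D)(h)$, whereas you take the sum over \emph{all} $\T$-subcoalgebras of $\Fv$; these two sums coincide (each $\T$-subcoalgebra $D\subseteq\Fv$ equals $\Im\varphi_h$ for $h=\pi_V|_D$), so the resulting $\Ftv$ and the verification of the universal property are identical. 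One small inaccuracy in your ``main obstacle'' paragraph: when $V=0$ the cofree coalgebra $\Fv$ is the terminal coalgebra $\Bbbk$, not $0$, so the degenerate case actually causes no trouble at all.
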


\begin{proof}
We use the notation from subsection \ref{fa:free-coalgebra}. If $h\in\cat V(D,V)$
and $D$ is of type $\T$, then we let $\varphi_{h}:=\al V(D)(h)$.
Since $\T$ is admissible, $\Im\varphi_{h}$ is a $\T$-coalgebra.
Thus, by condition $(\ddagger)$, $\Ftv:=\sum_{h}\Im\varphi_{h}$
is also in $\T$. Let $\pv:=\pi_{V}\circ\jmath_{V}$, where $\jmath_{V}$
is the inclusion map of $\Ftv$ into $F_{V}$.  Since $\Im\varphi_{h}\subseteq\Ftv$,
there exists a unique morphism $\alv(D)(h)$ in $\cat C(D,\Ftv)$,
such that $\jmath_{V}\circ\alv(D)(h)=\varphi_{h}$. It is worth mentioning
that $\alv(D)(h)$ is the unique coalgebra morphism such that 
\begin{equation}
\pv\circ\alv(D)(h)=h.\label{eq:alpha_V^T}
\end{equation}
We denote the map $h\mapsto\alv(D)(h)$ by $\alv(D)$. Let us prove
that $\bev(D)$ is an inverse of $\alv(D)$, where $\bev(D)(\varphi):=\pv\circ\varphi$,
for all $\varphi\in\cat C(D,\Ftv)$. Indeed, $\bev(D)$ is a left
inverse of $\alv(D)$, in view of Equation \eqref{eq:alpha_V^T}.
The fact that $\bev(D)$ is a right inverse of $\alv(D)$ follows
by the universal property of the cofree coalgebra and the relation
\[
\pv\circ\alv(D)(\pv\circ\varphi)=\pv\circ\varphi.
\]
Obviously, $\bev:=\{\bev(D)\}_{D\in\T}$ is a natural transformation,
so the family $\alv:=\{\alv(D)\}_{D\in\T}$ is natural too.
\end{proof}
\begin{rem}
Of course, if $\T$ is the class of all coalgebras, then $\Ftv$ is
precisely the cofree coalgebra over $V$. For this reason, we refer
to $\Ftv$ as the \emph{cofree coalgebra of type $\T$ over} $V$.
For example, taking $\T$ to be the class of all cocommutative coalgebras,
then we recover the construction of the cofree cocommutative coalgebra
over $V$. Of course, the theorem may also be applied to the category
of cocommutative or/and pointed coalgebras (in some admissible class
$\T$) to get the cofree cocommutative or/and pointed coalgebra (in
$\T$) over $V$.  
\end{rem}

We are going to define one of the main tools needed in our study of
deformations of associative algebras, namely $\iota$-factorizations
of a linear map. Roughly speaking, factorizations capture the underlying
linear structure of a deformation, obtained by disregarding the the
associativity property.
\begin{claim}[Factorizations and the presheaves $\Ff$ and $\Hf\cti$]
\label{claim:characterization_Ff} Henceforth, $\T$ will denote
an admissible class. We fix a nonzero linear map $f:C\to V$ and a
coalgebra morphism $\iota:C\to\widetilde{C}$, with $C$ and $\widetilde{C}$
coalgebras in $\T$. It is well-known that $D\to0$ is a coalgebra
morphism if and only if $D=0$ (if the zero map is a morphism, then
the counit of $D$ would vanish, which forces $D$ to be equal to
$0$). Stated differently, the image of $h$ does not vanish, for
any coalgebra morphism $h:D\to E$,  with $D\neq0$. In particular,
since $f\neq0$, it follows that $\widetilde{C}\neq0$. 
\end{claim}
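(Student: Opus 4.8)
The plan is to base every assertion on the counit axiom, which guarantees that a coalgebra with vanishing counit must itself be the zero space. Concretely, for a coalgebra $(D,\Delta_D,\varepsilon_D)$ the composite
\[
D\xrightarrow{\ \Delta_D\ }D\ot D\xrightarrow{\ \varepsilon_D\ot I_D\ }\Bbbk\ot D\xrightarrow{\ \sim\ }D
\]
equals $I_D$; hence if $\varepsilon_D=0$ then $\varepsilon_D\ot I_D=0$, the whole composite is $0$, and so $I_D=0$, i.e.\ $D=0$. I would record this as the single fact underlying all three parts of the statement.

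First I would settle the equivalence ``$D\to 0$ is a coalgebra morphism $\iff D=0$''. The implication from right to left is trivial, so only the converse carries content. There is exactly one linear map $h\colon D\to 0$, namely the zero map, and its being a morphism of coalgebras includes the counit compatibility $\varepsilon_0\circ h=\varepsilon_D$. Since the zero coalgebra carries the zero counit, this reads $\varepsilon_D=0$, whence $D=0$ by the observation above. Next, for the ``stated differently'' reformulation, let $h\colon D\to E$ be any coalgebra morphism with $D\neq 0$ and suppose, for contradiction, that $\Im h=0$. Then $h$ is the zero map, counit compatibility again forces $\varepsilon_D=\varepsilon_E\circ h=0$, and therefore $D=0$, a contradiction; hence $\Im h\neq 0$. (Equivalently, $\Im h$ is a subcoalgebra of $E$ and the corestriction $D\to\Im h$ is a surjective coalgebra morphism, so $\Im h=0$ would return us to the first part.)

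Finally I would deduce $\widetilde{C}\neq 0$. Because $f\colon C\to V$ is nonzero, its domain cannot be the zero space, so $C\neq 0$. Applying the reformulation to the coalgebra morphism $\iota\colon C\to\widetilde{C}$ yields $\Im\iota\neq 0$, and since $\Im\iota$ is a subspace of $\widetilde{C}$ we conclude $\widetilde{C}\neq 0$. I do not anticipate a genuine obstacle here, as the entire statement is a direct consequence of the counit axiom; the only points demanding a little care are the identification of the counit of the zero coalgebra (the unique, hence zero, functional on $0$) and the bookkeeping with the unit isomorphism $\Bbbk\ot D\simeq D$ when invoking that axiom, both of which are routine.
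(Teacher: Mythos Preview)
Your proof is correct and follows the same approach as the paper: the parenthetical remark in the claim already sketches the argument via the counit axiom, and you have simply spelled it out in full (including the explicit use of $(\varepsilon_D\ot I_D)\circ\Delta_D=I_D$ to deduce $D=0$ from $\varepsilon_D=0$, and the passage from $f\neq0$ to $C\neq0$ before applying the result to $\iota$).
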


\begin{defn}
\label{de:T-factorization}We say that a linear map $\ti f:\widetilde{C}\to V$
is an \emph{$\iota$-factorization} of $f$ provided that $\ti f\circ\iota=f$.
By a \emph{factorization} of $f$ we mean an $\iota$-factorization,
for a certain $\iota$ as above. Sometimes, for greater clarity, an
$\iota$-factorization $\ti f$ of $f$ will be written as a triple
$(C,\iota,\ti f)$.  For the set of $\iota$-deformations of $f$ we
use the notation: 
\begin{equation}
\Ff\cti:=\{\ti f\in\cat V(\ti C,V)\mid\ti f\circ\iota=f\}.\label{eq:functor F_f(C,i)}
\end{equation}
\end{defn}

\begin{claim}
We organize the factorizations of $f$ as a presheaf on \emph{$\cosl C{\T}$},
the \emph{coslice category of} $\T$ \emph{under} $C$. Recall that
this category consists in all pairs $\cti$ with $\widetilde{C}$
in $\T$ and $\iota\in\cat C(C,\widetilde{C})$.  The set of morphisms
from $\cti$ to $(\widetilde{C}',\iota')$ contains all coalgebra
morphism $\xi$ form $\widetilde{C}$ to $\widetilde{C}'$ such that
$\xi\circ\iota=\iota'$. 

By definition, $\Ff:\cosl CT\to\cat S$ maps $\cti$ to the set $\Ff\cti$.
For a morphism $\xi$ as above, $\Ff(\xi)$ is given by:
\[
\Ff(\xi)(f'):=f'\circ\xi,
\]
where$\ti f'$ is an $\iota'$-factorization of $f$. Note that it
may occur that $\Ff(\widetilde{C}',\iota')=\emptyset$.  In this case,
of course, $\Ff(\xi)$ is the empty function.

By Theorem \ref{thm:terminal_coalgebra}, there exists a unique morphism
of coalgebras $\sf:C\to\Ftv$ such that $\pv\circ\sf=f$.  Then, $(\Ftv,\sf)$
is an object in the coslice category of $\T$ under $C$ and $(\Ftv,\sf,\pv)$
is an example of $\sf$-factorization of $f$. Our main goal in this
section is to prove that $\Ff$ and $\Hf:=\cosl C{\T}(-,(\Ftv,\sf))$
are isomorphic. Note that
\[
\Hf\cti:=\{\varphi\in\cat C(\ti C,\Ftv)\mid\varphi\circ\iota=\sf\},\quad\Hf(\xi)(\varphi)=\varphi\circ\xi.
\]
By the universal property of $\Ftv$, a morphism of coalgebras $\vf$
from $\widetilde{C}$ to\textbf{ $\Ftv$} belongs to $\Hf\cti$ if
and only if $\pv\circ\varphi\circ\iota=\pv\circ\sf$. Therefore, in
view of Equation \eqref{eq:alpha_V^T}, we get
\begin{equation}
\Hf\cti=\{\vf\in\cat C(\ti C,\Ftv)\mid\pv\circ\varphi\circ\iota=f\}.\label{eq:phi}
\end{equation}
Now, for a given $\iota$-factorization of $f$, say $\ti f$, we
let $\vf_{\ti f}:=\alv(\widetilde{C})(\ti f)$. Then, by the definition
of $\alv(\widetilde{C})$, 
\[
\pv\circ\varphi_{\ti f}\circ\iota=\ti f\circ\iota=f.
\]
This means that $\vf_{\ti f}$ is a morphism in $\Hf\cti$, so $\alv(\widetilde{C})$
maps $\Ff\cti$ to $\Hf\cti$.  On the other hand, for any $\varphi\in\Hf\cti$,
\[
\bev(\widetilde{C})(\varphi)\circ\iota=\pv\circ\varphi\circ\iota=f.
\]
In other words, $\bev(\widetilde{C})(\varphi)$ is an element in $\Ff\cti$.
We deduce that one of the sets $\Ff\cti$ and $\Hf\cti$ is empty
if and only if the other is so. Let $\alf\cti$ and $\bef\cti$ denote
the restrictions of $\alv(\widetilde{C})$ and $\bev(\widetilde{C})$
to $\Ff\cti$ and $\Hf\cti$, respectively. It follows that these
maps are inverses each other and, clearly, $\bef:=\{\bef\cti\}_{\cti\in\cosl C{\T}}$
is a natural transformation. Summarizing, we proved the following
theorem.
\end{claim}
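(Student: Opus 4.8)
The plan is to assemble the pieces already established in the preceding paragraph into the statement that $\Ff$ and $\Hf$ are isomorphic presheaves on $\cosl C{\T}$. The main work has in fact been done: we already have, for each object $\cti$ in $\cosl C{\T}$, mutually inverse maps $\alf\cti:\Ff\cti\to\Hf\cti$ and $\bef\cti:\Hf\cti\to\Ff\cti$, obtained by restricting $\alv(\widetilde C)$ and $\bev(\widetilde C)$. So the only thing left is to check naturality, i.e. that for every morphism $\xi:\cti\to(\widetilde C',\iota')$ in $\cosl C{\T}$ the square relating $\Ff(\xi)$, $\Hf(\xi)$, $\bef\cti$ and $\bef(\widetilde C',\iota')$ commutes. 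I would state this as a short verification and then conclude.

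First I would recall that $\bev=\{\bev(D)\}_{D\in\T}$ is already known to be a natural transformation between the presheaves $\coalg -{\Ftv}$ and $\cat V(-,V)$ on $\T$ (this is the last sentence of the proof of Theorem \ref{thm:terminal_coalgebra}). Concretely, for a coalgebra morphism $\xi:\widetilde C\to\widetilde C'$ one has $\bev(\widetilde C)\big(\varphi\circ\xi\big)=\pv\circ\varphi\circ\xi=\big(\bev(\widetilde C')(\varphi)\big)\circ\xi$ for all $\varphi\in\cat C(\widetilde C',\Ftv)$. The point is simply that $\cosl C{\T}$ is a subcategory (a coslice) of $\T$, that $\Ff$ and $\Hf$ are obtained from $\cat V(-,V)$ and $\coalg -{\Ftv}$ by passing to the appropriate fibers over $f$ and $\sf$, and that on morphisms $\Ff(\xi)$ and $\Hf(\xi)$ act by the same formula $(-)\circ\xi$ as the ambient presheaves. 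Hence the naturality square for $\bef$ over $\xi$ is just the restriction of the corresponding naturality square for $\bev$ over $\xi$, once one checks the restriction makes sense: if $\ti f'\in\Ff(\widetilde C',\iota')$ then $\ti f'\circ\xi\in\Ff\cti$ because $(\ti f'\circ\xi)\circ\iota=\ti f'\circ(\xi\circ\iota)=\ti f'\circ\iota'=f$, and likewise for $\Hf$.

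Putting this together: I would write that $\bef\cti$ is a bijection for every $\cti$ with inverse $\alf\cti$ (already proved), that $\bef:=\{\bef\cti\}_{\cti\in\cosl C{\T}}$ is natural because it is the restriction of the natural transformation $\bev$, and therefore $\bef$ is a natural isomorphism $\Hf\Rightarrow\Ff$, with inverse the natural transformation $\alf:=\{\alf\cti\}_{\cti\in\cosl C{\T}}$. In particular $\Ff$ is representable, represented by the object $(\Ftv,\sf)$ of $\cosl C{\T}$. I do not anticipate a real obstacle here; the only mildly delicate point is bookkeeping — being careful that the fibers over $f$ and over $\sf$ correspond under $\alv,\bev$ (which is exactly the computation $\pv\circ\varphi\circ\iota=f\iff\varphi\in\Hf\cti$ from Equation \eqref{eq:phi}) and that the empty-fiber case is handled, so that when $\Ff(\widetilde C',\iota')=\emptyset$ the naturality square commutes trivially since both composites are the empty function. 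This last observation is worth one explicit sentence.
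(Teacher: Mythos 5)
Your proposal is correct and follows essentially the same route as the paper: it takes the mutually inverse restrictions $\alf\cti$ and $\bef\cti$ of $\alv(\widetilde C)$ and $\bev(\widetilde C)$ already constructed, and observes that naturality of $\bef$ is inherited from the naturality of $\bev$ because $\Ff(\xi)$ and $\Hf(\xi)$ act by the same precomposition formula as the ambient presheaves $\cat V(-,V)$ and $\coalg -{\Ftv}$. The paper simply asserts this naturality as clear, whereas you spell out the (correct) verification, including the well-definedness of the restriction and the empty-fiber case.
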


\begin{thm}
\label{prop:F=00003DH}The presheaves $\Ff$ and $\Hf$ are isomorphic,
i.e., $\Ff$ is representable.
\end{thm}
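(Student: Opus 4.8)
The plan is to deduce the theorem by assembling the facts established in the discussion preceding the statement. Since $\Hf=\cosl CT(-,(\Ftv,\sf))$ is, by definition, a representable presheaf — its representing object $(\Ftv,\sf)$ existing thanks to Theorem \ref{thm:terminal_coalgebra} — it suffices to produce an isomorphism of presheaves $\Ff\cong\Hf$.

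First I would record, for each object $\cti$ of $\cosl CT$, the two maps obtained by restricting $\alv(\widetilde C)$ and $\bev(\widetilde C)$, namely $\alf\cti\colon\Ff\cti\to\Hf\cti$, $\ti f\mapsto\alv(\widetilde C)(\ti f)$, and $\bef\cti\colon\Hf\cti\to\Ff\cti$, $\vf\mapsto\pv\circ\vf$. That these are well defined (i.e.\ that they land in the claimed target sets) and that they are mutually inverse was already checked above, using the universal property of $\Ftv$ together with Equation \eqref{eq:alpha_V^T}; and in the degenerate case $\Ff\cti=\emptyset$ one also has $\Hf\cti=\emptyset$, so both maps are the empty function and there is nothing to prove on that object.

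The only point that really needs attention is the naturality of the family $\bef=\{\bef\cti\}_{\cti}$ (equivalently of $\alf$). So I would fix a morphism $\xi\colon\cti\to(\widetilde C',\iota')$ in $\cosl CT$ — a coalgebra map $\xi\colon\widetilde C\to\widetilde C'$ with $\xi\circ\iota=\iota'$ — and chase the square whose horizontal arrows are $\Hf(\xi)$ and $\Ff(\xi)$ and whose vertical arrows are $\bef(\widetilde C',\iota')$ and $\bef\cti$. Both composites send $\vf\in\Hf(\widetilde C',\iota')$ to $\pv\circ\vf\circ\xi$: indeed $\bef\cti(\Hf(\xi)(\vf))=\pv\circ(\vf\circ\xi)$ while $\Ff(\xi)(\bef(\widetilde C',\iota')(\vf))=(\pv\circ\vf)\circ\xi$. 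Hence $\bef\colon\Hf\to\Ff$ is a natural transformation, its objectwise inverse $\alf$ is one as well, and therefore $\Ff\cong\Hf$; in particular $\Ff$ is representable, represented by $(\Ftv,\sf)$.

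I do not expect a genuine obstacle here: all the coalgebraic content — the existence of the cofree $\T$-coalgebra and the bijectivity of $\alv$ and $\bev$ — has already been dealt with, so the remaining work is just the routine diagram chase above together with the bookkeeping of the empty fibers.
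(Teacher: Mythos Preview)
Your proposal is correct and follows essentially the same approach as the paper: the paper also defines $\alf\cti$ and $\bef\cti$ as restrictions of $\alv(\widetilde C)$ and $\bev(\widetilde C)$, observes they are mutually inverse from the preceding discussion, and then simply asserts that $\bef$ is ``clearly'' natural. Your version is in fact slightly more complete, since you spell out the one-line diagram chase verifying naturality of $\bef$ that the paper leaves implicit.
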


\begin{claim}[The coalgebra $\Ftf\cti$]
\label{claim:C_f} We will further optimize the construction of the
presheaf $\Hf$ by reducing the range of the morphisms in $\Hf\cti$
as much as possible. To do that, for every object $\cti$ in $\cosl CT$,
we associate a $\T$-coalgebra $\Ftf\cti$ as follows. If there are
no $\iota$-factorizations of $f$, we set $\Ftf\cti=0$. Otherwise,
we let:

\begin{equation}
\Ftf\cti:=\sum_{\ti f\in\Ff\cti}\Im\varphi_{\ti f}.\label{eq:F_f(C,i)}
\end{equation}
We have seen in Subsection \ref{claim:characterization_Ff} that $\Im\vf_{\ti f}\neq0$
for any $\ti f\in\Ff\cti$. We conclude that $\Ff\cti=\emptyset$
if and only if $\Ftf\cti=0$.

Let $\jf\cti$ and $\pf\cti$ denote the inclusion map of $\Ftf\cti$
in $\Ftv$ and the restriction of $\pv$ to $\Ftf\cti$, respectively.
Last but not the least, we let
\begin{equation}
\Hfp\cti:=\{\vf'\in\cat C(\ti C,\Ftf\cti)\mid\pf\cti\circ\vf'\circ\iota=f\},\label{eq: F_and_Fi}
\end{equation}
whenever there are $\iota$-factorizations of $f$. If $\Ff\cti$
is empty, then $\Hfp\cti:=\emptyset$. 
\end{claim}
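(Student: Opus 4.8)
The statement to be proved amounts to two assertions: (a) the vector space $\Ftf\cti$ just defined is a coalgebra of type $\T$, so that $\jf\cti$, $\pf\cti$ and the set $\Hfp\cti$ are meaningful; and (b) $\Ff\cti=\emptyset$ precisely when $\Ftf\cti=0$. Both are trivial when $\Ff\cti=\emptyset$, since then $\Ftf\cti:=0$, which belongs to $\T$ after adjoining the zero coalgebra if necessary (cf. Remark \ref{rem:direct_sum=00003DT-type}). So I would fix from the outset an $\iota$-factorization $\ti f$ of $f$ and work under the assumption $\Ff\cti\neq\emptyset$.

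The first step, towards (a), is to recall from Theorem \ref{thm:terminal_coalgebra} that $\vf_{\ti f}=\alv(\ti C)(\ti f)$ is a morphism of coalgebras $\ti C\to\Ftv$; hence its image is a subcoalgebra of $\Ftv$ and, being a homomorphic image of $\ti C\in\T$, it lies in $\T$ by condition $(\dagger)$. Since the sum of a family of subcoalgebras of a fixed coalgebra is again a subcoalgebra, $\Ftf\cti=\sum_{\ti f\in\Ff\cti}\Im\vf_{\ti f}$ is a subcoalgebra of $\Ftv$, and it is of type $\T$ by condition $(\ddagger)$ — exactly as in the proof of Theorem \ref{thm:terminal_coalgebra}. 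Consequently $\jf\cti:\Ftf\cti\hookrightarrow\Ftv$ is a coalgebra morphism, $\pf\cti:=\pv\circ\jf\cti$ is a linear map into $V$, and $\Hfp\cti$ is a well-defined subset of $\cat C(\ti C,\Ftf\cti)$.

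For (b), one implication is immediate from the definition of $\Ftf\cti$, so it remains to show $\Ftf\cti\neq0$ whenever $\Ff\cti\neq\emptyset$. Here I would reuse the observation from Subsection \ref{claim:characterization_Ff}: a coalgebra admitting a morphism into $0$ is itself $0$. Since $f=\ti f\circ\iota\neq0$ forces $C\neq0$, we deduce $\ti C\neq0$; therefore $\vf_{\ti f}:\ti C\to\Ftv$ is a coalgebra morphism out of a nonzero coalgebra, so $\Im\vf_{\ti f}\neq0$ (equivalently, $\ti f=\pv\circ\vf_{\ti f}=0$ would give $f=0$). Hence $\Ftf\cti\supseteq\Im\vf_{\ti f}\neq0$, which proves (b).

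I do not anticipate a real obstacle: the only subtleties are that $\Ftf\cti$ must be seen as a subcoalgebra, not merely a subspace, of $\Ftv$, and that the empty case is handled separately. Looking ahead, the point of introducing $\Ftf\cti$ and $\Hfp$ is to refine Theorem \ref{prop:F=00003DH}, and I expect the companion result that $\Hf$ and $\Hfp$ are isomorphic presheaves on $\cosl CT$. The natural proof is that any $\vf\in\Hf\cti$ equals $\vf_{\ti f}$ with $\ti f:=\pv\circ\vf\in\Ff\cti$ — by the uniqueness clause in the universal property of $\Ftv$ relative to $\T$ — whence $\Im\vf=\Im\vf_{\ti f}\subseteq\Ftf\cti$ and $\vf$ corestricts to a unique $\vf'\in\Hfp\cti$ with $\jf\cti\circ\vf'=\vf$; postcomposition with $\jf\cti$ is the inverse assignment, and both are natural in $\cti$ because every map in sight is a (co)restriction of an identity morphism.
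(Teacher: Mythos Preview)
Your proposal is correct and matches the paper's own reasoning: the claim is essentially definitional, and the two assertions you isolate---that $\Ftf\cti$ is a $\T$-subcoalgebra of $\Ftv$ via $(\dagger)$ and $(\ddagger)$, and that $\Ff\cti=\emptyset\Leftrightarrow\Ftf\cti=0$ via the nonvanishing of images of coalgebra maps out of nonzero coalgebras---are exactly what the paper justifies inline by pointing back to Subsection~\ref{claim:characterization_Ff} and the argument of Theorem~\ref{thm:terminal_coalgebra}. Your closing paragraph also correctly anticipates Theorem~\ref{te:relative cofree} and its proof.
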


\begin{thm}
\label{te:relative cofree} For every $\cti$ in $\cosl C{\T}$, there
is a bijective correspondence $\Hfp\cti\stackrel{}{\xleftrightarrow{1-1}}\Hf\cti$.
\end{thm}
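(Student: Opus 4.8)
The plan is to produce an explicit bijection $\Phi\colon\Hfp\cti\to\Hf\cti$ induced by the canonical inclusion $\jf\cti\colon\Ftf\cti\hookrightarrow\Ftv$, splitting into the (trivial) empty case and the essential case. First dispose of the degenerate situation: if $\Ff\cti=\emptyset$, then $\Hf\cti=\emptyset$ (as noted just before Theorem~\ref{prop:F=00003DH}) and $\Hfp\cti=\emptyset$ by the very definition in Subsection~\ref{claim:C_f}, so the empty map is the required bijection. Henceforth assume $\Ff\cti\neq\emptyset$; then $\Ftf\cti\neq0$, it is a $\T$-coalgebra, and $\jf\cti$, $\pf\cti=\pv\circ\jf\cti$ are as in \ref{claim:C_f}.

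Set $\Phi(\vf'):=\jf\cti\circ\vf'$ for $\vf'\in\Hfp\cti$. This is well defined: $\jf\cti\circ\vf'$ is a coalgebra morphism $\widetilde{C}\to\Ftv$, and $\pv\circ(\jf\cti\circ\vf')\circ\iota=\pf\cti\circ\vf'\circ\iota=f$, so $\jf\cti\circ\vf'\in\Hf\cti$. Injectivity is immediate, because $\jf\cti$ is a monomorphism (the inclusion of a subcoalgebra), hence $\jf\cti\circ\vf'=\jf\cti\circ\vf''$ forces $\vf'=\vf''$.

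The crux is surjectivity, which amounts to showing that every $\vf\in\Hf\cti$ has $\Im\vf\subseteq\Ftf\cti$. Here I would use that $\alv(\widetilde{C})$ and $\bev(\widetilde{C})$ are mutually inverse (Theorem~\ref{thm:terminal_coalgebra}): putting $\ti g:=\bev(\widetilde{C})(\vf)=\pv\circ\vf\in\cat V(\widetilde{C},V)$, the membership $\vf\in\Hf\cti$ reads $\ti g\circ\iota=\pv\circ\vf\circ\iota=f$, so $\ti g\in\Ff\cti$, and $\vf=\alv(\widetilde{C})(\ti g)=\varphi_{\ti g}$ in the notation of Subsection~\ref{claim:characterization_Ff}. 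Therefore $\Im\vf=\Im\varphi_{\ti g}\subseteq\sum_{\ti f\in\Ff\cti}\Im\varphi_{\ti f}=\Ftf\cti$. Since $\jf\cti$ is the inclusion of $\Ftf\cti$ into $\Ftv$, the morphism $\vf$ corestricts uniquely to a coalgebra morphism $\vf'\colon\widetilde{C}\to\Ftf\cti$ with $\jf\cti\circ\vf'=\vf$; and $\pf\cti\circ\vf'\circ\iota=\pv\circ\jf\cti\circ\vf'\circ\iota=\pv\circ\vf\circ\iota=f$, so $\vf'\in\Hfp\cti$ and $\Phi(\vf')=\vf$.

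I do not expect a serious obstacle here: the one point requiring care is the bookkeeping that recognizes an arbitrary $\vf\in\Hf\cti$ as one of the generators $\varphi_{\ti f}$ appearing in the defining sum for $\Ftf\cti$, and this is precisely what the representability isomorphism of Theorem~\ref{thm:terminal_coalgebra} (the identity $\alv(\widetilde{C})\circ\bev(\widetilde{C})=\id$) supplies. Everything else is a routine diagram chase with the relations $\pf\cti=\pv\circ\jf\cti$ and $\vf\circ\iota$ fixed to $f$.
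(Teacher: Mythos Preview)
Your proof is correct and follows essentially the same approach as the paper: define the map $\vf'\mapsto\jf\cti\circ\vf'$, observe injectivity from $\jf\cti$ being an inclusion, and for surjectivity recognize any $\vf\in\Hf\cti$ as some $\varphi_{\ti f}$ with $\ti f\in\Ff\cti$ so that $\Im\vf\subseteq\Ftf\cti$. Your version is simply more explicit in invoking the inverse $\bev(\widetilde{C})$ to produce $\ti g$, whereas the paper compresses this step by appealing directly to the bijection $\alf\cti$ established prior to Theorem~\ref{prop:F=00003DH}.
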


\begin{proof}
If $\vf'$ is a coalgebra morphism in $\Hfp\cti$, then $\jmath_{f}\cti\circ\vf'$
is an element in $\Hf\cti$. Therefore, $\vf'\mapsto\jmath_{f}\cti\circ\vf'$
defines a map $\gamma_{f}\cti$ from $\Hfp\cti$ to $\Hf\cti$. Note
that the existence of a morphism $\vf'$ as above guaranties the fact
that both $\Hf\cti$ and $\Ff\cti$ are non-empty. When $\Hfp\cti=\emptyset$,
we let $\gf\cti$ to be the identity of the empty set. Obviously,
$\gf\cti$ is injective for all $\cti$ in the coslice category of
$\T$ under $C$. Furthermore, if $\varphi\in\Hf\cti$, then there
exists a $\iota$-factorization $\ti f$ such that $\varphi=\varphi_{\ti f}$.
Thus, the image of $\vf$ is contained in $\Ftf\cti$ and $\vf$ factors
through a morphism $\vf'\in\Hfp\cti$.  In conclusion, $\varphi=\jf\cti\circ\vf'=\gf\cti(\vf')$,
so $\gf\cti$ is bijective.
\end{proof}
\begin{claim}[The presheaf $\Hfp$]
\label{rem:F_f} Since $\gf\cti$ is invertible, for each morphism
$\xi:\cti\to(\widetilde{C}',\iota')$
in the coslice category of $\T$ under $C$, there exists a unique
function $\Hfp(\xi)$ that makes the following diagram commute:
\begin{equation}
\begin{array}{c}
\xymatrix{\Hf(\widetilde{C}',\iota')\ar[rr]^{\Hf(\xi)} &  & \Hf\cti\\
\Hfp(\widetilde{C}',\iota')\ar[u]^{\gamma_{f}(\widetilde{C}',\iota')}\ar@{-->}[rr]_{\Hfp(\xi)} &  & \Hfp\cti\ar[u]_{\gamma_{f}(\widetilde{C},\iota)}
}
\end{array}\label{eq:Fbar}
\end{equation}
It is not difficult to see that $\Hfp(\xi)(\vf')=\vf'_{\ti f}$, where
$\ti f:=\jf\cti\circ\vf'\circ\xi$. We leave to the reader the straightforward
task of verifying that the sets $\Hfp\cti$ and the maps $\Hfp(\xi)$
define a presheaf $\Hfp$, isomorphic with $\Hf$ via $\gf:=\{\gf\cti\}_{\cti\in\cosl C{\T}}$.

Taking into account Theorem \ref{prop:F=00003DH} and Theorem \ref{te:relative cofree},
we conclude that for every $\ti f$ in $\Ff\cti$, there exists a
unique $\vf'_{\ti f}\in\Hfp\cti$ such that $\ti f=\pf\cti\circ\vf_{\ti f}'$,
namely the corestriction of $\vf_{\ti f}$ onto $\Ftv\cti$. Thus,
$\jf\cti\circ\vf'_{\ti f}=\vf{}_{\ti f}$. On the other hand, every
$\vf'\in\Hfp\cti$ is equal to $\vf_{\ti f}'$, where $\ti f:=\pf\cti\circ\vf'$. 
\end{claim}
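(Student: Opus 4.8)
The plan is to transport the entire presheaf structure of $\Hf$ across the family of bijections $\gf=\{\gf\cti\}_{\cti\in\cosl C{\T}}$ produced by Theorem \ref{te:relative cofree}, and then to read off the explicit formula and the corollaries. Fix a morphism $\xi\colon\cti\to(\widetilde{C}',\iota')$ in $\cosl C{\T}$. Since $\gf\cti$ and $\gf(\widetilde{C}',\iota')$ are bijections, there is exactly one map $\Hfp(\xi)\colon\Hfp(\widetilde{C}',\iota')\to\Hfp\cti$ rendering the square \eqref{eq:Fbar} commutative, namely
\[
\Hfp(\xi):=\gf\cti^{-1}\circ\Hf(\xi)\circ\gf(\widetilde{C}',\iota').
\]
Existence and uniqueness are immediate from invertibility of the vertical arrows, and the degenerate case $\Ff\cti=\emptyset$ (where every map in sight is the empty function) is coherent with the equivalence $\Hfp\cti=\emptyset\iff\Ff\cti=\emptyset$ recorded in Subsection \ref{claim:C_f}. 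Unwinding the composite using $\gf(\widetilde{C}',\iota')(\vf')=\jf(\widetilde{C}',\iota')\circ\vf'$ and $\Hf(\xi)(\varphi)=\varphi\circ\xi$, and recalling from the proof of Theorem \ref{te:relative cofree} that $\gf\cti^{-1}$ is nothing but corestriction onto $\Ftf\cti$, one gets $\Hfp(\xi)(\vf')=\vf'_{\ti f}$ with $\ti f:=\pf(\widetilde{C}',\iota')\circ\vf'\circ\xi\in\Ff\cti$; the corestriction is legitimate precisely because $\jf(\widetilde{C}',\iota')\circ\vf'\circ\xi$ equals $\varphi_{\ti f}$, whose image lies in $\Ftf\cti=\sum_{\ti g\in\Ff\cti}\Im\varphi_{\ti g}$.

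Next I would verify that $\Hfp$ is a presheaf, which is forced by the displayed formula together with the contravariance of $\Hf$. Suppressing indices, $\Hfp(\mathrm{id})=\gf^{-1}\circ\Hf(\mathrm{id})\circ\gf=\gf^{-1}\circ\gf=\mathrm{id}$, while for composable $\xi,\eta$ one inserts $\gf\circ\gf^{-1}=\mathrm{id}$ at the matching object to obtain $\Hfp(\eta\circ\xi)=\gf^{-1}\circ\Hf(\eta\circ\xi)\circ\gf=\gf^{-1}\circ\Hf(\xi)\circ\Hf(\eta)\circ\gf=\Hfp(\xi)\circ\Hfp(\eta)$. By construction the squares \eqref{eq:Fbar} commute, so $\gf\colon\Hfp\Rightarrow\Hf$ is a natural transformation, and since each component $\gf\cti$ is bijective it is a natural isomorphism; hence $\Hfp\simeq\Hf$. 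Together with Theorem \ref{prop:F=00003DH} this yields $\Ff\simeq\Hf\simeq\Hfp$ as presheaves on $\cosl C{\T}$.

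Finally, for the concrete statements: given $\ti f\in\Ff\cti$, push it successively through the bijections $\alf\cti\colon\Ff\cti\to\Hf\cti$ of Theorem \ref{prop:F=00003DH} and $\gf\cti^{-1}\colon\Hf\cti\to\Hfp\cti$ to obtain $\vf'_{\ti f}\in\Hfp\cti$; tracing these two isomorphisms identifies $\vf'_{\ti f}$ with the corestriction of $\vf_{\ti f}:=\alv(\widetilde{C})(\ti f)$ onto $\Ftf\cti\subseteq\Ftv$, so $\jf\cti\circ\vf'_{\ti f}=\vf_{\ti f}$ and therefore $\pf\cti\circ\vf'_{\ti f}=\pv\circ\vf_{\ti f}=\ti f$ by \eqref{eq:alpha_V^T}. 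Both the uniqueness of $\vf'_{\ti f}$ among the $\vf'\in\Hfp\cti$ with $\pf\cti\circ\vf'=\ti f$ and the converse assertion that any such $\vf'$ equals $\vf'_{\ti f}$ for $\ti f:=\pf\cti\circ\vf'$ follow at once, since $\vf'\mapsto\pf\cti\circ\vf'$ is precisely the composite bijection $\Hfp\cti\to\Hf\cti\to\Ff\cti$. The part genuinely demanding care — and the main, essentially bookkeeping, obstacle — is keeping the maps $\jf$ and $\pf$ attached to $\cti$ distinct from those attached to $(\widetilde{C}',\iota')$ and checking at each step that the relevant images really land inside the small coalgebras $\Ftf(-)$, so that the various corestrictions exist; this is where one reuses the factorization-through-$\varphi_{\ti f}$ argument from the proof of Theorem \ref{te:relative cofree}.
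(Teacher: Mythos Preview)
Your proposal is correct and follows exactly the approach the paper sketches: transport the presheaf structure of $\Hf$ along the bijections $\gf\cti$ from Theorem~\ref{te:relative cofree}, then read off the explicit description and the corollaries via the composite bijection $\Hfp\cti\to\Hf\cti\to\Ff\cti$. You supply precisely the routine verifications (functoriality, naturality, the empty case) that the paper explicitly leaves to the reader.

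One small point in your favor: your formula $\ti f=\pf(\widetilde{C}',\iota')\circ\vf'\circ\xi$ is the correct one, whereas the paper's stated formula $\ti f=\jf\cti\circ\vf'\circ\xi$ appears to be a typo --- $\vf'$ has codomain $\Ftf(\widetilde{C}',\iota')$, not $\Ftf\cti$, so $\jf\cti\circ\vf'$ does not even compose, and in any case $\jf$ lands in $\Ftv$ rather than $V$. Your care in distinguishing the maps $\jf,\pf$ attached to $\cti$ from those attached to $(\widetilde{C}',\iota')$ is exactly what is needed to get this right.
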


\begin{rem}
Let $\ti f$ be an $\iota$-factorization of $f$. Since $\vf_{\ti f}$
and $\vf_{\ti f}'$ share the same image, and all morphisms in $\Hf\cti$
and $\Hfp\cti$ are of this type, we get
\begin{equation}
\Ftf\cti=\sum_{\varphi\in\Hf\cti}\Im\varphi=\sum_{\vf'\in\Hfp\cti}\Im\vf'.\label{eq:Ttif}
\end{equation}
\end{rem}

\begin{rem}
\label{rem:T'}Let $\T'$ be another admissible class, such that $C$
and $\widetilde{C}$ are of type $\T'$ as well. Thus, $\Ftf\cti$
is a $\T\bigcap\T'$-coalgebra. Indeed, under the standing assumptions,
$\Im\varphi$ is a $\T'$-coalgebra, for all $\varphi$ in $\Hf\cti$. 
From the above equation, it follows that $\Ftf\cti$ is a $\T'$-coalgebra.
For instance, if $C$ and $\widetilde{C}$ are $\T$-coalgebras, and
are additionally pointed and cocommutative, then $\Ftf\cti$ either
inherits these properties or is zero.
\end{rem}

Working with connected coalgebras requires more attention since, for
this class of coalgebras, the condition $(\ddagger)$ does not hold
in general. Nevertheless, as an application of our previous results,
we obtain the following result.

\begin{thm}
\label{thm:connected_sigma-factorization} If $C$ and $\widetilde{C}$
are connected coalgebras such that $\Ff\cti\neq\emptyset$, then $\Ftf\cti$
is connected too. 
\end{thm}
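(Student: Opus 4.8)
The plan is to exploit the explicit description of $\Ftf\cti$ in equation \eqref{eq:Ttif}, namely $\Ftf\cti=\sum_{\varphi\in\Hf\cti}\Im\varphi$, where each $\varphi$ is a coalgebra morphism from the connected coalgebra $\widetilde C$ into $\Ftv$. The key point is that a homomorphic image of a connected coalgebra is connected: if $\varphi\colon\widetilde C\to E$ is a surjective coalgebra map and $\widetilde C$ is connected, then $\Im\varphi$ is connected, since the coradical of a quotient is the image of the coradical, and the image of the one-dimensional $\widetilde C_0$ (spanned by the group-like $x_{\widetilde C}$) is again one-dimensional (it cannot be zero, as the image of a group-like element is group-like, hence nonzero). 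So each summand $\Im\varphi$ is a connected subcoalgebra of $\Ftv$. The remaining issue is that a sum of connected subcoalgebras need not be connected in general — this is exactly the failure of $(\ddagger)$ for connected coalgebras — so the heart of the proof is to show that here all the summands share the \emph{same} group-like element, which forces their sum to be connected.

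First I would fix $\ti f_0\in\Ff\cti$ (nonempty by hypothesis) and set $\vf_0:=\vf_{\ti f_0}=\alv(\widetilde C)(\ti f_0)$. Since $\widetilde C$ is connected with group-like $x_{\widetilde C}$, the element $g_0:=\vf_0(x_{\widetilde C})$ is a group-like element of $\Ftv$ lying in $\Im\vf_0\subseteq\Ftf\cti$. Now take any other $\ti f\in\Ff\cti$ and $\vf:=\vf_{\ti f}$. Both $\vf$ and $\vf_0$ restrict along $\iota$ to factorizations of $f$, and both send $x_{\widetilde C}$ to a group-like element of $\Ftv$; I claim $\vf(x_{\widetilde C})=g_0$. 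To see this, note that $C$ is also connected, so it has a unique group-like $x_C$, and $\iota(x_C)$ is a group-like element of $\widetilde C$; but $\widetilde C$ is connected, so $\widetilde C$ has a unique group-like element, whence $\iota(x_C)=x_{\widetilde C}$. Therefore $\vf(x_{\widetilde C})=\vf(\iota(x_C))=\alv(\widetilde C)(\ti f)(\iota(x_C))$, and applying $\pv$ and using $\pv\circ\alv(\widetilde C)(\ti f)=\ti f$ together with $\ti f\circ\iota=f$, one gets $\pv(\vf(x_{\widetilde C}))=f(x_C)$, which is independent of $\ti f$. Since $\Ftv$ is a \emph{cofree} coalgebra, a group-like element is determined by its image under $\pv$: if $g,g'\in G(\Ftv)$ with $\pv(g)=\pv(g')$, then the two coalgebra maps $\Bbbk\to\Ftv$ picking out $g$ and $g'$ have the same composite with $\pv$, hence coincide by the universal property of \ref{fa:free-coalgebra}. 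Thus $\vf(x_{\widetilde C})=g_0$ for every $\vf\in\Hf\cti$.

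Consequently, every summand $\Im\varphi$ in \eqref{eq:Ttif} is a connected subcoalgebra of $\Ftv$ whose unique group-like element is $g_0$; in particular $g_0\in\Im\varphi$ for all $\varphi$, so $\Bbbk g_0$ is contained in $\bigcap_\varphi\Im\varphi$. It remains to check that $\Ftf\cti=\sum_\varphi\Im\varphi$ has $\Bbbk g_0$ as its coradical. The coradical of a sum of subcoalgebras is contained in the sum of the coradicals (since any simple subcoalgebra of the sum, being finite-dimensional, already lies in a finite partial sum, and one can argue inductively using that the coradical of $D+D'$ is contained in $D_0+D'_0$ — the standard fact behind $(\ddagger)$ for pointed coalgebras), so $(\Ftf\cti)_0\subseteq\sum_\varphi(\Im\varphi)_0=\sum_\varphi\Bbbk g_0=\Bbbk g_0$; and conversely $\Bbbk g_0$ is a one-dimensional, hence simple, subcoalgebra of $\Ftf\cti$, so it is contained in the coradical. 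Therefore $(\Ftf\cti)_0=\Bbbk g_0$ is one-dimensional and $\Ftf\cti$ is connected.

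The main obstacle is the step showing all the maps $\vf\in\Hf\cti$ agree on the group-like element of $\widetilde C$: this is what replaces the missing closure axiom $(\ddagger)$, and it relies crucially on two facts — that connectedness of $C$ and $\widetilde C$ forces $\iota$ to match up their unique group-likes, and that cofreeness of $\Ftv$ makes a group-like element of $\Ftv$ rigidly determined by its $\pv$-image. Once this alignment is established, the conclusion follows from the elementary behaviour of coradicals under sums, which is already the engine behind the pointed case in Example \ref{exa:T-type}.
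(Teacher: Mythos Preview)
Your proof is correct and follows essentially the same architecture as the paper's: write $\Ftf\cti=\sum_{\varphi\in\Hf\cti}\Im\varphi$, observe that each $\Im\varphi$ is connected (as the image of the connected coalgebra $\widetilde C$), show that all these images share the same group-like element, and conclude via the standard fact that the coradical of a sum of subcoalgebras equals the sum of their coradicals (which is exactly \cite[Proposition 8.0.3(a)]{Sw}, as the paper cites).

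The one point where you diverge is in identifying the common group-like. You compute $\pv(\vf(x_{\widetilde C}))=f(x_C)$ and then invoke cofreeness of $\Ftv$ to argue that a group-like element is determined by its $\pv$-image. This is correct (and the observation that group-likes of a cofree coalgebra are parametrized by the base vector space is a nice one), but it is an unnecessary detour: by the very definition of $\Hf\cti$ one has $\vf\circ\iota=\sf$, so once you have established $\iota(x_C)=x_{\widetilde C}$ you immediately get $\vf(x_{\widetilde C})=\vf(\iota(x_C))=\sf(x_C)$, with no appeal to cofreeness. That is the paper's route, and it is one line shorter than yours.
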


\begin{proof}
Let $C_{0}$ denote the coradical of $C$. As a vector space $C_{0}$
has dimension one and is spanned by $x_{C}$, the unique group-like
element in $C$. Therefore, by \cite[Corollary 5.3.5]{Mo}, the coradical
of $\Im\sf$ is precisely the image of $C_{0}$ under $\sf$, which
is spanned by $\sf(x_{C})$. Hence, the image of $\sf$ is a connected
coalgebra, with the unique group-like element $\sf(x_{C})$. Proceeding
similarly, we can show that the image of $\vf$ is a connected coalgebra,
for any $\vf$ in $\Hf\cti$. The unique group-like element of $\Im\varphi$
is, of course, $\varphi(x_{C})$.  By the definition of $\Hf$, we
have $\vf\circ\iota=\sf$. Using the fact that coalgebra morphisms
send group-like elements to group-like elements, this equation implies
that $(\Im\varphi)_{0}$ is also spanned by $\sf(x_{C})$. Note that
this group-like element depends only on $f$. 

Let $\Ftf\cti_{0}$ denote the coradical of $\Ftf\cti$. By Equation
\eqref{eq:F_f(C,i)}, and in view of \cite[Proposition 8.0.3 (a)]{Sw},
we obtain
\[
\Ftf\cti_{0}=\sum_{\vf\in\Hf\cti}(\Im\varphi)_{0}.
\]
We deduce that $\Ftf\cti_{0}$ is spanned by $\sf(x_{C})$, meaning
that $\Ftf\cti$ is connected.
\end{proof}
\begin{example}
\label{ex:k}Let $\iota$ denote the canonical inclusion
$\Bbbk\hookrightarrow\Bbbk[t]$ of connected coalgebras. Any nonzero
linear map $f:\Bbbk\to V$ is uniquely determined by a nonzero vector
$v$, so $F_{v}^{\T}(\Bbbk[t],\iota):=\Ftf(\Bbbk[t],\iota)$ is a
cocommutative connected coalgebra. Here, $\T$ denotes the admissible
class of all cocommutative coalgebras. The problem of determining
this coalgebra is both intriguing and challenging.
\end{example}

\section{\texorpdfstring{$\iota$}{$\iota$}-Deformations of algebras in tensor categories\label{sec:Def}}

In this section we introduce the concept of $\iota$-deformation of
an algebra $(A,m)$ in $\M_{C}$. Here, $\iota$ denotes, as usual,
a coalgebra morphism from $C$ to $\ti C$, where both coalgebras
are cocommutative of some type $\T$. In our approach, deformations
are also structured as a presheaf on the category $\cosl C{\T}$.
In the main result of this section, Theorem \ref{thm:D=00003DC},
we prove an analog of Theorem \ref{te:relative cofree} for deformations.
\begin{claim}[Algebras in tensor categories]
 As explained in the introduction, in this paper we focus on the
study of algebras that are not necessarily unital. Henceforth, unless
otherwise stated, an algebra in an arbitrary monoidal category $(\mathfrak{M},\ot,\I)$
will denote an object $A$ together with an associative multiplication,
i.e., a morphism\emph{ }$m\in\mathfrak{M}(A^{\ot2},A)$ satisfying
the relation:
\begin{equation}
m\circ(m\ot A)=m\circ(A\ot m)\circ a_{A,A,A}.\label{eq:algebra}
\end{equation}
In light of the Coherence Theorem, we can omit the associativity constraint
from the right-hand side of the above equation.

A morphism between two algebras $(A,m)$ and $(A',m')$ is an element
$h\in\mathfrak{M}(A,A')$ such that
\[
h\circ m=m'\circ(h\ot h).
\]
For every algebra $(A,m)$ in $\mathfrak{M}$ and every invertible
morphism $h\in\mathfrak{M}(A,A)$, by transport of the algebra structure,
we get a new algebra $(A,m_{h})$, where 
\begin{equation}
m_{h}:=h^{-1}\circ m\circ(h\ot h).\label{eq:m_f}
\end{equation}
It is easy to see that $h:(A,m_{h})\to(A,m)$ is an isomorphism of
algebras in $\mathfrak{M}$. 

Algebras in $\mathfrak{M}$ and their morphisms form, of course, a
category. 

Returning to our favorite example of a monoidal category, we denote
by $\cat A_{C}$ the category of algebras in $\M_{C}$. If $\iota:C\to\widetilde{C}$
is a morphism of cocommutative coalgebras of type $\T$, then $\iota^{*}$
induces a functor from $\cat A_{\widetilde{C}}$ to $\cat A_{C}$,
still denoted by $\iota^{*}$. For example, $\varepsilon_{C}^{*}$
allows us to embed $\cat M$ and $\cat A$ into $\cat M_{C}$ and
$\cat A_{C}$, respectively.

To simplify the notation, the hom-space $\cat M(A^{\ot n},A)$ will
be denoted by $\mbb An$. By convention, $\mbb A0=\cat M(\I,A)$.
It is important to note that the multiplication of an algebra in $\M$
is a morphism in $\mbb A2$. In contrast, the multiplication of an
algebra in $\M_{C}$ is a $\Bbbk$-linear map from $C$ to $\mbb A2$. 
\end{claim}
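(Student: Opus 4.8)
The plan is to treat this block as a collection of routine verifications and to dispatch each by a direct computation, after invoking the Coherence Theorem (\ref{cl:CohThm}) to assume that $\mathfrak{M}$ is strict; thus every associativity and unit constraint becomes an identity and the interchange law $(u\circ v)\ot(u'\circ v')=(u\ot u')\circ(v\ot v')$ for the tensor bifunctor is available without bookkeeping. There are three assertions to settle: that the transported multiplication $m_{h}$ is associative and that $h$ is an algebra isomorphism; that algebras in $\mathfrak{M}$ together with their morphisms form a category; and that $\iota^{*}$ restricts to a functor $\cat A_{\widetilde C}\to\cat A_{C}$.

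For the transport of structure, I would first record the tautology $h\circ m_{h}=m\circ(h\ot h)$, which follows by cancelling $h\circ h^{-1}$ in the definition of $m_{h}$; this already says that $h$ \emph{would} be an algebra morphism, once $m_{h}$ is known to be associative. To prove associativity of $m_{h}$, I would compute $m_{h}\circ(m_{h}\ot A)$ by substituting the definition of the outer $m_{h}$ and then using the previous identity together with the interchange law to rewrite $(h\ot h)\circ(m_{h}\ot A)=(m\ot A)\circ h^{\ot 3}$, obtaining $m_{h}\circ(m_{h}\ot A)=h^{-1}\circ m\circ(m\ot A)\circ h^{\ot 3}$. The symmetric computation gives $m_{h}\circ(A\ot m_{h})=h^{-1}\circ m\circ(A\ot m)\circ h^{\ot 3}$, and the two right-hand sides agree by the associativity \eqref{eq:algebra} of $m$. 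Finally, conjugating the defining relation shows $m_{h}\circ(h^{-1}\ot h^{-1})=h^{-1}\circ m$, so $h^{-1}$ is an algebra morphism as well; hence $h$ is an isomorphism $(A,m_{h})\to(A,m)$.

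The categorical claim is immediate: $I_{A}$ satisfies $I_{A}\circ m=m=m\circ(I_{A}\ot I_{A})$, so identities are algebra morphisms, and if $h,h'$ are composable algebra morphisms then $h'\circ h\circ m=m''\circ\big((h'\circ h)\ot(h'\circ h)\big)$ by applying the morphism condition twice and the interchange law once; associativity and the unit laws for composition are inherited from $\mathfrak{M}$. For $\iota^{*}$, I would invoke the fact, established in \ref{fact:MC}, that $\iota^{*}:\M_{\widetilde C}\to\M_{C}$ is a \emph{strict} monoidal functor, hence commutes with the convolution composition $\ast$, the tensor product of morphisms, and sends the constraint $\varepsilon_{\widetilde C}^{*}(a)$ to $\varepsilon_{C}^{*}(a)$. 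Applying $\iota^{*}$ to the associativity equation satisfied by a multiplication $\ti m\in\M_{\widetilde C}(A^{\ot 2},A)$ then yields the corresponding equation for $\iota^{*}(\ti m)=\ti m\circ\iota$ in $\M_{C}$, and likewise $\iota^{*}$ preserves the algebra-morphism condition; this is exactly the general principle that strict monoidal functors carry (non-unital) algebras to algebras.

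The only step requiring genuine care --- the ``main obstacle'', such as it is --- is the associativity computation for $m_{h}$: in the non-strict setting one must insert the associativity constraints $a_{A,A,A}$ on both sides and check that they cancel under conjugation by $h^{\ot 3}$, using naturality of $a$ to slide it past $h^{\ot 3}$. Strictifying via the Coherence Theorem removes this bookkeeping entirely, which is why I would set up the proof in a strict model from the outset; all other verifications are formal manipulations with composition and the tensor bifunctor.
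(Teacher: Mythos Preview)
Your proposal is correct. The paper does not supply a proof for this block at all: it is an expository claim in which the assertions about $m_{h}$, the category of algebras, and the functor $\iota^{*}$ are left as ``easy to see'' or ``of course'' remarks, and your routine verifications are exactly the standard ones a reader would supply.
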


\begin{example}[The algebras in $\cat M_{\Bbbk[t]}$.]
\label{ex: def_Gerst} By \S\ref{fa:k=00005Bt=00005D}, a morphism
$\widetilde{m}\in\M_{\Bbbk[t]}(A\ot A,A)$ can be identified to a
family $\{m_{n}\}_{n\in\N}$, where $m_{n}\in\mbb A2$ for all $n$.
It is easy the see that $(A,\widetilde{m})$ is an algebra in $\M_{\Bbbk[t]}$
if and only if, for all $n$, we have:
\[
\sum_{i+j=n}m_{i}\circ(m_{j}\ot A)=\sum_{i+j=n}m_{i}\circ(A\ot m_{j}).
\]
Clearly, $\cat A_{\Bbbk}$ and the category of algebras in $\M$ coincide,
and the functor $\iota^{*}:\cat A_{\Bbbk[t]}\to\cat{A_{\Bbbk}}$ maps
$(A,\{m_{n}\}_{n\in\N})$ to $(A,m_{0})$. Moreover, $(A,\widetilde{m})$
belongs to the fiber of $\iota^{*}$ over an algebra $(A,m)$ in $\cat A$
if and only if $m_{0}=m$.  In conclusion,  algebras in the fiber
of $\iota^{*}$ over $(A,m)$ are in one-to-one correspondence with
the classical deformations of $m$, as defined by Gerstenhaber in
\cite{G1}.
\end{example}

\begin{defn}
\label{de:Deformation} An $\iota$-\emph{deformation} of $(A,m)\in\cat A_{C}$
is an algebra $(A,{\it \widetilde{m}})\in\cat A_{\widetilde{C}}$
such that $\iota^{*}(A,\widetilde{m})=(A,m)$. By a \emph{deformation
of} $(A,m)$ we mean an $\iota$-deformation, for some coalgebra map
$\iota$ with domain $C$. 
\end{defn}

In view of the classic case, we might be tempted to define deformations
only for algebras in $\cat M=\cat M_{\Bbbk}$, that is to require
that $\iota\in\cat C(\Bbbk,\widetilde{C})$ or, equivalently, to pick
a group-like element in $\widetilde{C}$. However, for greater flexibility,
we prefer the above definition. A key advantage of this approach is
that it immediately leads us to the notion of deformation of a family
of algebras in $\cat M$, assuming that $C$ is pointed and cosemisimple.
Moreover, this more general definition will enable us, in the following
section, to address the classification of deformations using the theory
of coalgebra extensions.
\begin{defn}
\label{def:echiv_deformation}Two $\iota$-deformations $(A,\widetilde{m})$
and $(A,\widetilde{m}')$ of the same algebra $(A,m)$ in $\M_{C}$
are \emph{equivalent} if and only if there is an isomorphism $h\in\M_{\widetilde{C}}(A,A)$
such that $h\circ\iota=\varepsilon_{C}(-)I_{A}$
\[
\widetilde{m}'=\widetilde{m}_{h}\qquad\text{and}\qquad h\circ\iota=\varepsilon_{C}(-)I_{A}.
\]
Recall that $\widetilde{m}_{h}$ is defined as in Equation \eqref{eq:m_f}.
Since we now work in $\M_{\widetilde{C}}$, we have $\widetilde{m}_{h}:=h^{-1}*\widetilde{m}*(h\ot h)$. 
If $\widetilde{m}$ and $\widetilde{m}'$ are equivalent then we will
write $\widetilde{m}\sim\widetilde{m}'$.
\end{defn}

\begin{claim}[The presheaf $\Dm$ and the coalgebras $\Dtm\cti$ and $\Dtm$]
\label{claim:D^T_m} \label{claim:D_m} As with factorizations, we
associate to $(A,m)$ a presheaf $\Dm:\cosl C{\T}\to\cat S$ as follows:
\[
\Dm\cti:=\{(A,\widetilde{m})\in\cat A_{\widetilde{C}}\mid\widetilde{m}\circ\iota=m\}.
\]
On morphism $\Dm$ acts by $\Dm(\xi)(\widetilde{m}'):=\widetilde{m}'\circ\xi$,
where $\xi:\cti\to(\widetilde{C}',\iota')$ and $\widetilde{m}'\in\Dm(\widetilde{C}',\iota')$. 

Specializing our general results on factorizations of a linear map
to the case when $V:=\mbb A2$, $f:=m$, and $\ti f$ is equal to
the underlying factorization of a $\iota$-deformation $\widetilde{m}$
of $m$, we obtain the following commutative diagram:

\begin{equation}
\begin{array}{c}
\xymatrix{\widetilde{C}\ar[rrrr]^{\widetilde{m}}\ar[ddd]_{\vf{}_{\widetilde{m}}}\ar[rrdd]|{\vf'_{\widetilde{m}}} &  &  &  & \mbb A2\\
 &  & C\ar[rru]^{m}\ar[ull]_{\iota}\ar[rrdd]|(0.6){\iota_{m}}\\
 &  & \Ftm(\widetilde{C},\iota)\ar@{^{(}->}[dll]^{\jmath_{m}(\widetilde{C},\iota)}\ar@{_{(}->}[rrd]_{\jmath_{m}(\widetilde{C},\iota)}\ar[rruu]|(0.25){\hole}|{\ \ \pi_{m}(\widetilde{C},\iota)}\\
F_{\mbb A2}^{\T}\ar@{=}[rrrr] &  &  &  & F_{\mbb A2}^{\T}\ar[uuu]_{\pi_{\mbb A2}}
}
\end{array}\label{eq:diag_fac}
\end{equation}
Aiming to show that $\Dm$ is representable, we first introduce the
$\T$-coalgebra:
\begin{equation}
\Dtm\cti:=\sum_{\widetilde{m}\in\Dm\cti}\Im\varphi_{\widetilde{m}}.\label{eq:Dtm}
\end{equation}
Here it is understood that, if there are no $\iota$-deformations
of $m$, then $\Dtm\cti=0$. Clearly, $\Dtm\cti$ is a $\T$-subcoalgebra
of $\Ftm\cti$. We let:
\begin{equation}
\Dtm:=\sum_{\cti\in\cosl CT}\Dtm\cti.\label{eq:D_m}
\end{equation}
If $\widetilde{m}$ is an $\iota$-deformation of $m$, then $\Im\vf'_{\widetilde{m}}=\Im\vf_{\widetilde{m}}\subseteq\Dtm\cti\subseteq\Dtm$. 
Since $\iota_{m}=\jm\cti\circ\vf'_{\widetilde{m}}\circ\iota$,
the image of $\iota_{m}$ is contained in $\Dtm$. Henceforth, the
corestrictions of $\varphi_{\widetilde{m}}$ and $\vf'_{\widetilde{m}}$
onto $\Dtm$ and $\Dtm\cti$ will be denoted by $\psi_{\widetilde{m}}$
and $\psi'_{\widetilde{m}}$, respectively. Similarly, there is a
morphism of coalgebras $\iota'_{m}$, from $C$ to $\Dtm$, such that
$\iota_{m}'=\jm\circ\iota{}_{m}$, where $\jm$ is the canonical inclusion
$\Dtm\hookrightarrow F_{\mbb A2}^{\T}$. For the restriction of $\pi_{m}\cti$
to $\Dtm\cti$, we will retain the same notation, and $\pm$ will stand
for the restriction of $\pi_{\mbb A2}$ to $\Dtm$.  

To better understand the properties of these morphisms, we redraw
Diagram \eqref{eq:diag_fac} in the context of the current framework.

\begin{equation}
\begin{array}{c}
\xymatrix{\widetilde{C}\ar[rrrr]^{\tilde{m}}\ar[ddd]_{\psi_{\widetilde{m}}}\ar[rrdd]|{\psi'_{\tilde{m}}} &  &  &  & \mbb A2\\
 &  & C\ar[rru]^{m}\ar[ull]_{\iota}\ar[rrdd]|(0.6){\iota'_{m}}\\
 &  & \Dtm(\widetilde{C},\iota)\ar@{^{(}->}[dll]^{\jmath_{m}(\widetilde{C},\iota)}\ar@{_{(}->}[rrd]_{\jmath_{m}(\tilde{C},\iota)}\ar[rruu]|(0.25){\hole}|{\ \ \pi_{m}(\widetilde{C},\iota)}\\
\Dtm\ar@{=}[rrrr] &  &  &  & \Dtm\ar[uuu]_{\pi_{m}}
}
\end{array}\label{eq:diag_def}
\end{equation}
It is clear that this diagram commutes, as all arrows are derived
by appropriately restricting or corestricting the morphism from Diagram
\eqref{eq:diag_fac}. We are now in a position to construct the key
ingredient needed in the proof of the main result of this part, Theorem
\ref{thm:D=00003DH}.
\end{claim}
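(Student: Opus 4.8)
The plan is to recognize the presheaf $\Dm$ as a \emph{subpresheaf} of the factorization presheaf $\Ff$ of Subsection~\ref{claim:characterization_Ff}, specialized to $V:=\mbb A2$ and $f:=m$, and then to transport the subcoalgebra constructions of Subsection~\ref{claim:C_f} along this inclusion. Since every $\iota$-deformation $(A,\widetilde{m})$ of $(A,m)$ is in particular an $\iota$-factorization of $m$, we have $\Dm\cti\subseteq\Ff\cti$ for every object $\cti$, and both presheaves act on morphisms by precomposition. Hence the presheaf axioms for $\Dm$ follow from those of $\Ff$ once I check that $\Dm$ is stable under the structure maps, i.e. that for a morphism $\xi\colon\cti\to(\widetilde{C}',\iota')$ in $\cosl CT$ and $\widetilde{m}'\in\Dm(\widetilde{C}',\iota')$ the factorization $\Ff(\xi)(\widetilde{m}')=\widetilde{m}'\circ\xi=\xi^{*}(\widetilde{m}')$ is again an $\iota$-deformation of $(A,m)$. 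The associativity of $\widetilde{m}'\circ\xi$ is immediate because $\xi^{*}\colon\M_{\widetilde{C}'}\to\M_{\widetilde{C}}$ is a strict monoidal functor (Subsection~\ref{fact:MC}), hence carries the algebra $(A,\widetilde{m}')$ to an algebra in $\M_{\widetilde{C}}$; and the normalization $(\widetilde{m}'\circ\xi)\circ\iota=\widetilde{m}'\circ(\xi\circ\iota)=\widetilde{m}'\circ\iota'=m$ holds because $\xi$ is a coslice morphism. This is the only genuinely new verification for the presheaf part.

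Next I would settle the coalgebra claims. Each $\varphi_{\widetilde{m}}=\alv(\widetilde{C})(\widetilde{m})\colon\widetilde{C}\to F_{\mbb A2}^{\T}$ is a coalgebra morphism out of $\widetilde{C}\in\T$, so its image $\Im\varphi_{\widetilde{m}}$ is a homomorphic image of a $\T$-coalgebra and therefore lies in $\T$ by axiom~$(\dagger)$; being a subcoalgebra of the fixed coalgebra $F_{\mbb A2}^{\T}$, these images may be summed. The definition \eqref{eq:Dtm} sums a subfamily of the images defining $\Ftm\cti$ (because $\Dm\cti\subseteq\Ff\cti$), so $\Dtm\cti$ is a subcoalgebra of $\Ftm\cti$, and axiom~$(\ddagger)$ gives $\Dtm\cti\in\T$; when $\Dm\cti=\emptyset$ the convention $\Dtm\cti=0$ is admissible by Remark~\ref{rem:direct_sum=00003DT-type}. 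Summing once more over $\cti$ — a sum of subcoalgebras of the single ambient coalgebra $F_{\mbb A2}^{\T}$, hence harmless despite $\cosl CT$ possibly being a proper class — and invoking $(\ddagger)$ a second time shows that $\Dtm$ is a $\T$-subcoalgebra of $F_{\mbb A2}^{\T}$.

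It then remains to locate $\iota_{m}$ inside $\Dtm$ and to produce the corestriction $\iota'_{m}$. The clean observation is that $(A,m)$ itself belongs to $\Dm(C,\id_{C})$ (its defining condition reads $m\circ\id_{C}=m$), so $\iota_{m}=\alv(C)(m)=\varphi_{m}$ is literally one of the summands of $\Dtm(C,\id_{C})$, whence $\Im\iota_{m}\subseteq\Dtm(C,\id_{C})\subseteq\Dtm$. More generally, for any $\iota$-deformation $\widetilde{m}$ the universal property \eqref{eq:alpha_V^T} identifies $\jm\cti\circ\vf'_{\widetilde{m}}\circ\iota=\alv(\widetilde{C})(\widetilde{m})\circ\iota$ with the unique coalgebra map $C\to F_{\mbb A2}^{\T}$ whose composite with $\pi_{\mbb A2}$ is $\widetilde{m}\circ\iota=m$, namely $\iota_{m}$; this both proves the displayed identity $\iota_{m}=\jm\cti\circ\vf'_{\widetilde{m}}\circ\iota$ and re-confirms $\Im\iota_{m}\subseteq\Dtm$. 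Since $\Im\iota_{m}\subseteq\Dtm$, the map $\iota_{m}$ corestricts to a coalgebra morphism $\iota'_{m}\colon C\to\Dtm$ with $\jm\circ\iota'_{m}=\iota_{m}$. Finally, every arrow of Diagram~\eqref{eq:diag_def} is obtained from the corresponding arrow of the commuting Diagram~\eqref{eq:diag_fac} by restriction or corestriction along the inclusions $\Dtm\cti\hookrightarrow\Ftm\cti$ and $\Dtm\hookrightarrow F_{\mbb A2}^{\T}$, and these inclusions intertwine all the maps in sight, so commutativity of \eqref{eq:diag_def} descends from that of \eqref{eq:diag_fac}.

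I expect no serious obstacle here: the whole statement is a bookkeeping exercise resting on the representability results already established for factorizations. The one point demanding care — and the closest thing to a real step — is the stability check of the first paragraph, where one must confirm that precomposition with a coslice morphism preserves \emph{associativity}; this is exactly what the strict monoidality of $\xi^{*}$ guarantees, and it is the reason the category $\M_{C}$ and its functoriality were set up as in Subsection~\ref{fact:MC}. A secondary subtlety worth flagging is the set-theoretic one: the sums defining $\Ftm\cti$, $\Dtm\cti$ and $\Dtm$ range over possibly proper classes, so it is essential that they all take place inside the one fixed coalgebra $F_{\mbb A2}^{\T}$, where axiom~$(\ddagger)$ applies verbatim.
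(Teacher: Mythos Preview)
Your proposal is correct and follows essentially the same approach as the paper, which treats this claim as a construction with brief embedded justifications rather than giving a separate proof; you have simply fleshed out the details (the stability of $\Dm$ under precomposition via strict monoidality of $\xi^{*}$, the $\T$-coalgebra verifications via $(\dagger)$ and $(\ddagger)$, and the descent of commutativity from Diagram~\eqref{eq:diag_fac} to~\eqref{eq:diag_def}). Your observation that $(A,m)\in\Dm(C,\id_{C})$, hence $\Im\iota_{m}=\Im\varphi_{m}\subseteq\Dtm(C,\id_{C})\subseteq\Dtm$, is a slightly cleaner way to locate $\iota_{m}$ than the paper's route through $\iota_{m}=\jm\cti\circ\vf'_{\widetilde{m}}\circ\iota$; one minor imprecision is that the sums defining $\Ftm\cti$ and $\Dtm\cti$ range over genuine \emph{sets} (subsets of $\cat V(\widetilde{C},\mbb A2)$), and only the sum defining $\Dtm$ ranges over the possibly proper class $\cosl CT$.
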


\begin{lem}
\label{lem:pi_m}The triple $(\Dtm,\sm',\pm)$ is an $\iota'_{m}$-deformation
of $m$.
\end{lem}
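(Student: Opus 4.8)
The plan is to verify the two defining conditions for $(\Dtm, \sm', \pm)$ to be an $\iota'_m$-deformation of $m$: first, that $(A, \pm)$ is genuinely an algebra in $\M_{\Dtm}$, i.e.\ that $\pm$ is an associative multiplication; and second, that it lies in the fiber over $(A,m)$, i.e.\ $\pm \circ \iota'_m = m$.

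The fiber condition is the easy half. By construction $\iota'_m = \jm \circ \iota_m$ and $\pm$ is the restriction of $\pi_{\mbb A2}$ to $\Dtm$, so $\pm \circ \iota'_m = \pi_{\mbb A2} \circ \jm \circ \iota_m = \pi_{\mbb A2} \circ \iota_m$; and from Diagram~\eqref{eq:diag_def} (or directly: $\iota_m = \jm\cti \circ \vf'_{\widetilde m} \circ \iota$ while $\pi_{\mbb A2}\circ\vf_{\widetilde m} = \widetilde m$ with $\widetilde m \circ \iota = m$) this composite is $m$. One should just observe that the reference to a specific $\iota$-deformation $\widetilde m$ is harmless since $\Dtm\cti$ is nonzero precisely when such deformations exist, and the identity $\pi_{\mbb A2}\circ\iota_m = m$ does not depend on the choice.

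For the associativity of $\pm$, the idea is to use that $\Dtm = \sum_{\cti} \sum_{\widetilde m} \Im \psi_{\widetilde m}$ is spanned by the images of the corestricted morphisms $\psi_{\widetilde m} = \vf_{\widetilde m}$ corestricted onto $\Dtm$, and that on each such image $\pm$ agrees with the pullback of $\widetilde m$. Concretely, I would show $\psi_{\widetilde m}$ is an algebra morphism from $(A, \widetilde m)$ in $\M_{\ti C}$ to $(A, \pm)$ in $\M_{\Dtm}$; equivalently that $\pi_{\mbb A2} \circ \psi_{\widetilde m}$ corresponds to $\widetilde m$ up to the convolution structure, which is exactly the commutativity already recorded in Diagram~\eqref{eq:diag_def}, namely $\pm \circ \psi_{\widetilde m}$ (read as convolution composition in the appropriate sense) $= \widetilde m$. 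Since $(A,\widetilde m)$ is an algebra in $\M_{\ti C}$ and $\M_{\ti C} \to \M_{\Dtm}$-type transport along a coalgebra map preserves associativity, the restriction of the associativity relation for $\widetilde m$ gives the associativity relation for $\pm$ evaluated on $\Im \Delta_{\Dtm}$ restricted through $\psi_{\widetilde m}$. Summing over all $\widetilde m$ and all $\cti$, and using that comultiplication on a sum of subcoalgebras lands in the corresponding sum of tensor products (so that the spanning elements of $\Dtm^{\ot 3}$ relevant to the associativity equation are covered), one concludes $\pm \circ (\pm \ast \Dtm) = \pm \circ (\Dtm \ast \pm)$ as morphisms in $\M_{\Dtm}(A^{\ot 3}, A)$.

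The main obstacle I expect is bookkeeping around the fact that $\Dtm$ is a sum of subcoalgebras indexed over \emph{all} pairs $\cti$ in the coslice category, not a single coalgebra: one must be careful that the associativity identity, which is a statement about the iterated comultiplication $\Delta^2$ of $\Dtm$, can be checked on elements coming from the individual subcoalgebras $\Im\psi_{\widetilde m}$, since $\Delta^2$ of such an element stays inside $(\Im\psi_{\widetilde m})^{\ot 3}$. Once that reduction is in place, the verification collapses to pulling back, via $\psi_{\widetilde m}$, the already-known associativity of $\widetilde m$ in $\M_{\ti C}$, together with the naturality/strict-monoidality of the functors $\iota^*$ recorded in Subsection~\ref{fact:MC}. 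I would also double-check the edge case where $\Dtm\cti = 0$ for every $\cti$, i.e.\ $m$ admits no deformation at all: then $\Dtm = 0$, and the statement is either vacuous or should be read with the convention that the zero coalgebra carries the trivial (empty) multiplication, matching the convention already adopted for $\Ftf$ in Subsection~\ref{claim:C_f}.
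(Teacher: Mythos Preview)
Your approach is correct and essentially identical to the paper's: both reduce the associativity of $\pm$ to checking the linear identity $L(\pm)=R(\pm)$ on each subcoalgebra $\Im\varphi_{\widetilde m}$ (which span $\Dtm$), and there use that $\varphi_{\widetilde m}$ is a coalgebra map with $\pm\circ\varphi_{\widetilde m}=\widetilde m$ to reduce to the associativity of $\widetilde m$. One small wording issue: your phrase ``$\M_{\ti C}\to\M_{\Dtm}$-type transport'' has the arrow backwards---the coalgebra map $\psi_{\widetilde m}:\ti C\to\Dtm$ induces $\psi_{\widetilde m}^*:\M_{\Dtm}\to\M_{\ti C}$, not the other way---but your subsequent paragraph states the correct mechanism (each $\Im\psi_{\widetilde m}$ is a subcoalgebra, so $\Delta^2$ stays inside it), and that is exactly the paper's computation. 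Also, the edge case $\Dtm=0$ never arises: $m$ itself is an $\Id C{}$-deformation of $m$, so $\Dtm(C,\Id C{})\neq 0$.
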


\begin{proof}
From the commutativity of \eqref{eq:diag_def}, it is clear that $(\Dtm,\sm',\pm)$
is an $\iota'_{m}$-factorization of $m$. To see that this triple
is an $\iota'_{m}$-deformation of $m$, we let $L(\nu):=\nu*(\nu\ot\Id A)$
and $R(\nu):=\nu*(\Id A{}\ot\nu)$, for every $\nu:D\to\mbb A2$,
where $D$ is an arbitrary cocommutative coalgebra.  Thus, we have
to prove that $L(\pm)=R(\pm)$. Taking into account the definition
of $\Dtm$ and the fact that $L(\pm)$ and $R(\pm)$ are linear maps
from $D_{m}^{\T}$ to $\mbb A3$, it is sufficient to show that their
restrictions to $\Im\varphi_{\widetilde{m}}$ are equal, for any $\cti$
in $\cosl CT$ and $\widetilde{m}\in\Dm\cti$. Let $z\in\widetilde{C}$. 
Since $\varphi_{\widetilde{m}}$ is a morphism of coalgebras, we have:
\begin{alignat*}{1}
L(\pm)(\varphi_{\widetilde{m}}(z)) & =\sum\pm(\varphi_{\widetilde{m}}(z)\r 1)\circ\big(\pm(\varphi_{\widetilde{m}}(z)\r 2)\ot I_{A}\big)\\
 & =\sum\pm(\varphi_{\widetilde{m}}(z\r 1))\circ\big(\pm(\varphi_{\widetilde{m}}(z\r 2))\ot I_{A}\big)\\
 & =\sum\widetilde{m}(z\r 1)\circ(\widetilde{m}(z\r 2)\ot I_{A}).
\end{alignat*}
We can compute $R(\pm)(\varphi_{\widetilde{m}}(z))$ in a similar
manner. Since $\widetilde{m}$ is associative, we conclude that $R(\pm)$
is equal to $L(\pm)$. 
\end{proof}
\begin{thm}
\label{thm:D=00003DH}The presheaves $\Dm$ and $\Dfr:=\cosl C{\T}(-,(D_{m},\iota'_{m}))$
are isomorphic.
\end{thm}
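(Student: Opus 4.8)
The proof will parallel the argument used for Theorem \ref{te:relative cofree}, transporting the representability of $\Ff$ (Theorem \ref{prop:F=00003DH}) and the reduction achieved in $\Hfp$ (Subsection \ref{rem:F_f}) from the setting of factorizations to that of deformations, while checking that the algebra condition is respected throughout. Concretely, I would first construct, for each object $\cti$ in $\cosl C{\T}$, a bijection $\gm\cti:\Dfr\cti\to\Dm\cti$, where I write $\Dfr\cti:=\cosl C{\T}\big((\ti C,\iota),(\Dtm,\sm')\big)$ — that is, $\Dfr\cti=\{\psi\in\cat C(\ti C,\Dtm)\mid\psi\circ\iota=\sm'\}$. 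Given such a $\psi$, Lemma \ref{lem:pi_m} tells us that $(\Dtm,\sm',\pm)$ is an algebra in $\M_{\Dtm}$ belonging to the fiber over $(A,m)$; pulling this algebra structure back along the coalgebra morphism $\psi$ via the strict monoidal functor $\psi^{*}$ (cf. Subsection \ref{fact:MC}) yields a morphism $\pm\circ\psi:\ti C\to\mbb A2$ which is automatically associative in $\M_{\ti C}$, and the relation $\psi\circ\iota=\sm'$ together with the commutativity of Diagram \eqref{eq:diag_def} gives $(\pm\circ\psi)\circ\iota=\pm\circ\sm'=m$. Hence $\pm\circ\psi\in\Dm\cti$, and I set $\gm\cti(\psi):=\pm\circ\psi$.

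**Injectivity and surjectivity of $\gm\cti$.** Injectivity is the point where the definition of $\Dtm$ as a \emph{sum of images} does the work, exactly as in Theorem \ref{te:relative cofree}: if $\pm\circ\psi=\pm\circ\psi'=\ti m$, then both $\psi$ and $\psi'$ are coalgebra morphisms $\ti C\to\Dtm\subseteq F_{\mbb A2}^{\T}$ whose composite with $\pi_{\mbb A2}$ equals $\ti m$, so by the uniqueness clause in the universal property of the cofree $\T$-coalgebra (Theorem \ref{thm:terminal_coalgebra}, Equation \eqref{eq:alpha_V^T}) they coincide as maps into $F_{\mbb A2}^{\T}$, hence into $\Dtm$. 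For surjectivity, take $\ti m\in\Dm\cti$; then $\ti m$ is in particular an $\iota$-factorization of $m$, so it has an associated $\vf_{\ti m}=\alv(\ti C)(\ti m)$ with $\Im\vf_{\ti m}\subseteq\Dtm\cti\subseteq\Dtm$ by the very definition \eqref{eq:Dtm}--\eqref{eq:D_m}. Corestricting $\vf_{\ti m}$ to $\Dtm$ gives $\psm\in\cat C(\ti C,\Dtm)$ with $\pm\circ\psm=\ti m$ and, from Diagram \eqref{eq:diag_def}, $\psm\circ\iota=\sm'$; thus $\psm\in\Dfr\cti$ and $\gm\cti(\psm)=\ti m$. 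So $\gm\cti$ is a bijection, and its inverse sends $\ti m$ to $\psm$.

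**Naturality and conclusion.** It remains to promote the family $\{\gm\cti\}$ to an isomorphism of presheaves, i.e.\ to check that for every morphism $\xi:\cti\to(\widetilde C',\iota')$ in $\cosl C{\T}$ the square relating $\Dfr(\xi)$ (which is $\psi\mapsto\psi\circ\xi$, the coslice hom-functor) and $\Dm(\xi)$ (which is $\ti m'\mapsto\ti m'\circ\xi$) commutes: indeed $\gm\cti(\Dfr(\xi)(\psi))=\pm\circ(\psi\circ\xi)=(\pm\circ\psi)\circ\xi=\Dm(\xi)(\gm{(\widetilde C',\iota')}(\psi))$, which is immediate from associativity of composition. (One should also handle the degenerate case $\Dm\cti=\emptyset$, which by \eqref{eq:Dtm} and the non-vanishing of images of coalgebra morphisms — Subsection \ref{claim:characterization_Ff} — corresponds exactly to there being no coalgebra morphism $\psi:\ti C\to\Dtm$ with $\psi\circ\iota=\sm'$, so both sets are empty and $\gm\cti$ is the identity of $\emptyset$.) I expect the main obstacle to be purely bookkeeping rather than conceptual: one must be careful that "$\ti m$ associative in $\M_{\ti C}$" really is equivalent to the pulled-back condition $\psi^{*}$ of "$\pm$ associative in $\M_{\Dtm}$", which uses that $\psi^{*}$ is a strict monoidal functor and hence preserves the equation \eqref{eq:algebra} defining an algebra — this is where Lemma \ref{lem:pi_m} is essential, since it is what guarantees the target algebra $(\Dtm,\pm)$ exists in the first place. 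Everything else follows the template of Theorems \ref{prop:F=00003DH} and \ref{te:relative cofree} verbatim.
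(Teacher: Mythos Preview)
Your proof is correct and follows essentially the same approach as the paper's: the paper constructs the map $\alm'\cti:\Dm\cti\to\Dfr\cti$, $\ti m\mapsto\psi_{\ti m}$, and shows its inverse is $\bem'\cti:\psi\mapsto\pm\circ\psi$, whereas you start from $\bem'\cti$ (which you call $\gm\cti$) and build the inverse, but the ingredients---the universal property of $F_{\mbb A2}^{\T}$ for injectivity, the definition of $\Dtm$ as a sum of images for surjectivity, and Lemma~\ref{lem:pi_m} for associativity---are identical. One small remark: your justification for the empty case via \eqref{eq:Dtm} is not quite the right reason (since $\Dtm$ itself need not vanish when $\Dm\cti=\emptyset$), but the well-definedness of your forward map $\gm\cti$ already gives the needed implication $\Dfr\cti\neq\emptyset\Rightarrow\Dm\cti\neq\emptyset$ by contraposition, exactly as the paper argues.
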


\begin{proof}
For any $\iota$-deformation $\widetilde{m}$ of $m$, the coalgebra
map $\psi_{\widetilde{m}}$ belongs to $\Dfr\cti$, as shown by a
simple computation. Thus, $\widetilde{m}\mapsto\psi_{\widetilde{m}}$
defines a map $\alm'\cti$ from $\Dm\cti$ to $\Dfr\cti$. We obtain
another function by mapping $\psi\in\Dfr\cti to \jm\circ\psi\in\mathfrak{F}_{m}\cti$.
This function is then used, along with the inclusion $\Dm\cti\subseteq\Fm\cti$
and the bijection $\alm\cti$, to construct the diagram:
\[
\begin{array}{c}
\xymatrix{\Dm\cti\ar@{^{(}->}[d]\ar[rr]^{\alm'\cti} &  & \Dfr\cti\ar[d]\\
\Fm\cti\ar[rr]_{\alm\cti} &  & \mathfrak{F}_{m}\cti
}
\end{array}
\]
Based on the identities $\alm\cti(\widetilde{m})=\vf_{\widetilde{m}}=\jm\circ\psi_{\widetilde{m}}$,
one can see that the diagram commutes. Obviously, $\alm'\cti$ is
injective, since $\alm\cti$ is so. 

To prove that it is surjective, we pick an arbitrary element $\psi$
in $\Dfr\cti$.  Note that the image of $\psi$ is included in $\Dtm\cti$. 
As $\alm\cti$ is surjective, there exists $\widetilde{m}\in\Fm\cti$
such that $\vf_{\widetilde{m}}=\alm\cti(\widetilde{m})=\jm\cti\circ\psi$.
Thus, $\Im\vf_{\widetilde{m}}=\Im\psi\subseteq\Dtm$. Since $\widetilde{m}=\pi_{\mbb A2}\circ\vf_{\widetilde{m}}$,
we can write $\widetilde{m}$ as a composition of an associative multiplication
(the restriction of $\pm$ to $\Im\vf_{\widetilde{m}}$, which is
associative in light of Lemma \ref{lem:pi_m}) and a coalgebra morphism
(the corestriction of $\vf_{\widetilde{m}}$ onto its image). Now,
knowing that $\widetilde{m}\in\Dm\cti$, we can check that $\psi_{\widetilde{m}}=\psi$,
so $\alm'\cti$ is surjective. 

Of course, the above discussion is meaningful only if there exist
$\iota$-deformations of $m$. But, if $\Dm\cti$ is empty, then $\Dfr\cti$
is also empty, so in this case $\alm'\cti$ is the identity of the
empty set.

If $\bem'\cti$ is the inverse of $\alm'\cti$, then $\bem'\cti(\psi)=\pm\circ\psi$,
provided that $\Dm\cti$ is non-empty. Otherwise, it is equal to the
identity of the empty set. As $\bem':=\{\bem'\cti\}_{\cti\in\cosl C{\T}}$
is a natural transformation, the presheaves $\Dfr$ and $\Dm$ are
isomorphic. 
\end{proof}
\begin{rem}
An $\iota$-factorization $\widetilde{m}$ of $m$ is associative,
if and only if the restriction of $\pi_{\mbb A2}$ to $\Im\vf_{\widetilde{m}}$
is associative. Indeed, the ``if'' part is a consequence of Lemma
\ref{lem:pi_m}, remarking that the restriction of $\pi_{\mbb A2}$
to $\Im\vf_{\widetilde{m}}$ and the restriction of $\pi_{m}$ to
the same set coincide. The ``only if'' part follows by the proof
of the theorem.
\end{rem}

\begin{claim}[The presheaf $\Dfr'$]
 As in the case of factorizations, there is a second presheaf $\Dfr'$
that may be used to study the deformations of an algebra $(A,m)$.
In principle, the construction of $\Dfr'$ and the proof that this
presheaf is isomorphic to $\Dm$, follow the same approach as in Subsection
\ref{rem:F_f}. Therefore, we will only outline the key steps, leaving
the details to the reader.

In the case when there are no $\iota$-deformations of $m$, we take
$\Dfr'\cti=\emptyset$. Otherwise, 
\[
\Dfr'\cti:=\big\{\psi'\in\cat C(\ti C, \Dtm)\mid\pm\cti\circ\psi'\circ\iota=m\big\}.
\]
For an arbitrary $\iota$-deformation $\widetilde{m}$ one shows,
using Diagram \eqref{eq:diag_fac}, that $\psi'_{\widetilde{m}}$ is
an element in $\Dfr'\cti$. Hence the existence of $\iota$-deformations
forces $\Dfr'\cti$ to be non-empty. Conversely, if $\psi'$ is a
coalgebra map in $\Dmp\cti$ then the relation 
\begin{equation}
\gm'\cti(\psi'):=\jm\cti\circ\psi'\label{eq:bem_bar}
\end{equation}
defines a function from $\Dfr'\cti$ to $\Dm\cti$.  The next step
is to demonstrate, following the reasoning in the proof of \ref{te:relative cofree},
that $\gm'\cti$ is bijective. Once this is established, for each
morphism $\xi$ from $\cti$ to $(\widetilde{C}',\iota')$, we define
$\Dfr'(\xi)$ as the unique function that ensures the commutativity
of the diagram below.
\begin{equation}
\begin{array}{c}
\xymatrix{\Dfr(\widetilde{C}',\iota')\ar[rr]^{\Dm(\xi)} &  & \Dfr\cti\\
\Dfr'(\widetilde{C}',\iota')\ar[u]^{\gm'(\widetilde{C}',\iota')}\ar@{-->}[rr]_{\Dfr'(\xi)} &  & \Dfr'\cti\ar[u]_{\gm'(\widetilde{C},\iota)}
}
\end{array}\label{eq:Dbar}
\end{equation}
In conclusion, the construction of the presheaf $\Dfr'$ is complete
and $\gm':=\{\gm'\cti\}_{\cti\in\cosl C{\T}}$ is an isomorphism between
$\Dm$ and $\Dfr'$.

Let us assume, in addition, that $C$ and $\widetilde{C}$ are pointed.
By Remark \ref{rem:T'}, the coalgebra $\Ftm\cti$ is either pointed
or zero. Thus, $\Dtm\cti$ is equal to zero or pointed, like any subcoalgebra
of $\Ftm\cti$.  Furthermore, $\Dtm$ is pointed as it is the sum of
these subcoalgebras, and $\Dtm(C,\Id C)$ is nonzero.

Similarly, if $C$ and $\widetilde{C}$ are connected and $\Dtm\cti\neq0$,
then the latter coalgebra is connected as well. Indeed, let $\cti$
be an object in the coslice category of $\T$ under $C$. By Theorem
\ref{thm:connected_sigma-factorization} it follows that $\Ftm\cti$
is connected. Hence, $\Dtm\cti$ is connected. By the proof of the
above mentioned theorem, we also know that the group-like element
of $\Dtm\cti$ is precisely $\sm(x_{C})$, which does not depend on
$\cti$. Thus, by \cite[Proposition 8.0.3 a)]{Sw} $\Dtm$ is a connected
coalgebra. All in all, we have proved the following theorem. 
\end{claim}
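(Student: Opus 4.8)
The plan is to reassemble the statement from the material built up in the preceding paragraphs: the first ingredient is the isomorphism of presheaves, the second the two stability properties of $\Dtm$. For the isomorphism I would mimic the proof of Theorem \ref{te:relative cofree}: one checks that for each object $\cti$ of $\cosl C{\T}$ the assignment $\gm'\cti$ of \eqref{eq:bem_bar} is a bijection from $\Dfr'\cti$ onto $\Dm\cti$. Injectivity is immediate because $\jm\cti$ is an inclusion of coalgebras. For surjectivity one starts from an arbitrary $\iota$-deformation $\widetilde m$ of $m$; by the very definition \eqref{eq:Dtm} of $\Dtm\cti$ the image of the coalgebra morphism $\vf_{\widetilde m}$ is contained in $\Dtm\cti$, so $\vf_{\widetilde m}$ corestricts to a coalgebra morphism $\psi'_{\widetilde m}\in\Dfr'\cti$, and since $\widetilde m=\pi_{\mbb A2}\circ\vf_{\widetilde m}$ while the restriction of $\pi_{\mbb A2}$ to $\Dtm\cti$ equals $\pm\cti$, the assignment $\widetilde m\mapsto\psi'_{\widetilde m}$ is a two-sided inverse of $\gm'\cti$. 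Once these bijections are available, Diagram \eqref{eq:Dbar} forces a unique map $\Dfr'(\xi)$ for each morphism $\xi$ of $\cosl C{\T}$, and a routine check shows that $\Dfr'$ is thereby a presheaf and that $\gm':=\{\gm'\cti\}$ is a natural isomorphism onto $\Dfr$. Combined with Theorem \ref{thm:D=00003DH} this gives $\Dm\cong\Dfr\cong\Dfr'$, so $\Dm$ is representable, the representing object being $(\Dtm,\iota'_m)$ — which, by Lemma \ref{lem:pi_m}, carries the $\iota'_m$-deformation $(\Dtm,\iota'_m,\pm)$ of $m$.

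For the stability properties, suppose first that $C$ and $\widetilde C$ are pointed. The class $\T'$ of pointed cocommutative coalgebras is admissible by Example \ref{exa:T-type} and contains $C$ and $\widetilde C$, so Remark \ref{rem:T'} gives that $\Ftm\cti$ is pointed or zero, for every $\cti$. Then $\Dtm\cti$, a subcoalgebra of $\Ftm\cti$, is pointed or zero as well, and by \eqref{eq:D_m} the coalgebra $\Dtm$ is a sum of pointed subcoalgebras of $F_{\mbb A2}^{\T}$; hence $\Dtm$ is pointed by \cite[Proposition 8.0.3]{Sw}, and nonzero because $\Dtm(C,\id_C)\neq0$. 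Suppose now that $C$ and $\widetilde C$ are connected and fix $\cti$ with $\Dtm\cti\neq0$. Then $\Ftm\cti\neq0$, so Theorem \ref{thm:connected_sigma-factorization} gives that $\Ftm\cti$ is connected; hence its nonzero subcoalgebra $\Dtm\cti$ is connected, and by the proof of that theorem its unique group-like element is $\sm(x_C)$, which does not depend on $\cti$. Consequently every nonzero summand in \eqref{eq:D_m} has group-like element $\sm(x_C)$, and \cite[Proposition 8.0.3]{Sw} again shows that $\Dtm$ is connected.

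The delicate point is the surjectivity step of the first part: one has to verify carefully that the corestriction $\psi'_{\widetilde m}$ is a genuine coalgebra morphism landing in $\Dtm\cti$ with $\pm\cti\circ\psi'_{\widetilde m}\circ\iota=m$, and that $\widetilde m\mapsto\psi'_{\widetilde m}$ and $\gm'\cti$ are mutually inverse — in short, that the bookkeeping already carried out for factorizations in Theorems \ref{prop:F=00003DH} and \ref{te:relative cofree} transports faithfully to deformations, carrying the associativity condition throughout. A related subtlety is that connected coalgebras do not satisfy condition $(\ddagger)$, so one cannot apply Theorem \ref{thm:terminal_coalgebra} directly to $\Dtm$; its connectedness has to be routed through Theorem \ref{thm:connected_sigma-factorization} together with the fact that all the relevant group-like elements coincide, which is precisely what makes \cite[Proposition 8.0.3]{Sw} applicable to the possibly infinite sum \eqref{eq:D_m}.
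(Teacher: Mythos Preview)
Your proposal is correct and follows essentially the same route as the paper: you prove bijectivity of $\gm'\cti$ by the injection/corestriction argument parallel to Theorem~\ref{te:relative cofree}, transport the presheaf structure via diagram~\eqref{eq:Dbar}, and handle the pointed and connected cases exactly as the paper does, via Remark~\ref{rem:T'} and Theorem~\ref{thm:connected_sigma-factorization} together with \cite[Proposition~8.0.3]{Sw}. One cosmetic wrinkle: as written, $\gm'\cti(\psi')=\jm\cti\circ\psi'$ is a coalgebra map $\ti C\to\Dtm$, i.e.\ an element of $\Dfr\cti$ rather than of $\Dm\cti$; your sentence ``$\widetilde m\mapsto\psi'_{\widetilde m}$ is a two-sided inverse of $\gm'\cti$'' silently identifies $\Dfr\cti$ with $\Dm\cti$ via Theorem~\ref{thm:D=00003DH}, which you do invoke a line later, so the argument is sound but the typing could be cleaner.
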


\begin{thm}
\label{thm:D=00003DC}Let $(A,m)$ denote an algebra in $\M_{C}$.
Then the presheaves $\Dm$ and $\Dfr'$ are isomorphic via $\gm'$.
If $C$ and $\widetilde{C}$ are pointed (respectively connected),
then $\Dtm$ and $\Dtm\cti$ are pointed (respectively connected). 
\end{thm}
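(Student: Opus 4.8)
The plan is to deduce Theorem \ref{thm:D=00003DC} by assembling pieces that have already been proved, treating the isomorphism statement and the pointed/connected statements as two separate tasks. First, for the isomorphism $\Dm\cong\Dfr'$ via $\gm'$: I would observe that all the necessary verifications were already sketched in Subsection ``The presheaf $\Dfr'$'' immediately preceding the theorem. So the proof amounts to organizing those observations: (i) $\Dfr'$ is well-defined on objects (by Equation \eqref{eq:Dtm} and the discussion of $\Dtm\cti\subseteq\Ftm\cti$); (ii) each $\gm'\cti$ as in \eqref{eq:bem_bar} is a bijection, which one proves exactly as in Theorem \ref{te:relative cofree}, since $\psi'\mapsto\jm\cti\circ\psi'$ is injective and every element of $\Dm\cti$ is of the form $\widetilde{m}=\pm\cti\circ\psi'_{\widetilde m}$ with $\psi'_{\widetilde m}\in\Dfr'\cti$ (using the commutativity of Diagram \eqref{eq:diag_def} together with Lemma \ref{lem:pi_m}, which guarantees the corestricted multiplication is associative); (iii) the definition \eqref{eq:Dbar} of $\Dfr'(\xi)$ and the compatibility making $\gm':=\{\gm'\cti\}$ a natural isomorphism. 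The key input here is really Theorem \ref{thm:D=00003DH}: since $\Dm\cong\Dfr$ and $\Dfr'\cong\Dm$ through $\gm'$, nothing new is needed beyond bookkeeping.

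Second, for the pointedness claim: I would assume $C$ and $\widetilde{C}$ are pointed $\T$-coalgebras and apply Remark \ref{rem:T'} with $\T'$ the admissible class of pointed coalgebras, which yields that $\Ftm\cti$ is pointed or zero. Since $\Dtm\cti$ is a subcoalgebra of $\Ftm\cti$, and a subcoalgebra of a pointed coalgebra is pointed (by \cite[Corollary 5.3.5]{Mo}), $\Dtm\cti$ is pointed or zero. Then $\Dtm=\sum_{\cti}\Dtm\cti$ is pointed by \cite[Proposition 8.0.3]{Sw} (sum of pointed coalgebras), noting $\Dtm$ is nonzero because $\Dtm(C,\id_C)\ni\varphi_{m}$ is nonzero (here $(A,m)$ itself, viewed via $\id_C$, is an $\id_C$-deformation of $m$). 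For the connected case, I would instead invoke Theorem \ref{thm:connected_sigma-factorization} to get that $\Ftm\cti$ is connected whenever $\Ff\cti\ne\emptyset$ (applied with $f=m$, $V=\mbb A2$), hence $\Dtm\cti$ is connected when nonzero, with unique group-like element $\sm(x_C)$ independent of $\cti$; then $\Dtm$, being a sum of connected coalgebras all sharing the same group-like element, is connected by \cite[Proposition 8.0.3 a)]{Sw}.

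The main obstacle, such as it is, lies in the isomorphism part: one must be careful that the bijection $\gm'\cti$ is genuinely natural, i.e., that the squares \eqref{eq:Dbar} can be filled consistently for all $\xi$ and that the resulting $\Dfr'(\xi)$ assignments compose correctly to make $\Dfr'$ a presheaf. This is exactly the same subtlety handled in Subsection \ref{rem:F_f} for $\Hfp$, so the resolution is to transport the presheaf structure along the componentwise bijections $\gm'\cti$ — the uniqueness of $\Dfr'(\xi)$ forces functoriality — and then observe that $\gm'$ is by construction an isomorphism $\Dm\Rightarrow\Dfr'$. Since everything reduces to already-established lemmas and standard facts about pointed/connected coalgebras, I expect the proof to be short, essentially a reference-assembly argument.
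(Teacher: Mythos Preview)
Your proposal is correct and follows essentially the same approach as the paper: the paper's proof is precisely the content of the subsection ``The presheaf $\Dfr'$'' preceding the theorem statement, and you have accurately summarized its structure, including the use of Remark \ref{rem:T'} for the pointed case and Theorem \ref{thm:connected_sigma-factorization} (together with the observation that the shared group-like element is $\sm(x_C)$) for the connected case. One minor remark: the reference \cite[Corollary 5.3.5]{Mo} concerns quotients rather than subcoalgebras, so for ``subcoalgebra of pointed is pointed'' you should instead note directly that any simple subcoalgebra of $\Dtm\cti$ is simple in $\Ftm\cti$ and hence one-dimensional; the paper itself simply asserts this without citation.
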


\begin{example}
Let $\iota$ denote the canonical inclusion $\Bbbk\subseteq\Bbbk[t]$.
Any algebra in $\M_{\Bbbk}$ is uniquely determined by $m(1$), which
is an algebra in $\M$. The characterization of the connected cocommutative
coalgebra $\Dtm(\Bbbk[t],\iota)$ is also an interesting problem,
related to the question from Example \ref{ex:k}.
Additionally, it is intriguing to compare it with $\Ftm(\Bbbk[t],\iota)$.
Let us note that the letter coalgebra makes sense because $m$ is,
in particular, an $\iota$-factorization of $m$.
\end{example}

\section{The case of coalgebra extensions}

In this section, we will examine $\iota$-deformations of a given
algebra $(A,m)$ in $\M_{C}$, focusing on the case where $\iota:C\to\widetilde{C}$
is an extension of cocommutative coalgebras. We will see that this
is the appropriate framework for studying $\iota$-deformations using
homological methods, as in the classical case. The admissible coalgebras
are no longer of significant importance. It is sufficient that all
coalgebras we work with to be cocommutative. In other words, one may
now consider $\T$ as the admissible class of all cocommutative coalgebras.
\begin{claim}[Extensions of cocommutative coalgebras]
\label{claim:extension} By definition, an injective morphism of
coalgebras $\iota:C\to\widetilde{C}$ is an \emph{extension of coalgebras}
if 
\begin{equation}
\Delta_{\widetilde{C}}(\widetilde{C})\subseteq\widetilde{C}\ot\iota(C)+\iota(C)\ot\widetilde{C}.\label{eq:extension}
\end{equation}
Coalgebra extensions were defined and classified in \cite[Subsection 3.2]{Do}.
The key steps of the classification consists in the construction of
a $C$-bicomodule structure on $X:=\coker\iota$ and of a $2$-cocycle
of $C$ with coefficients in $X$.  This is carried out as follows.
First, one picks a linear retract $\lambda$ of $\iota$. We can assume
that $\lambda$ is normalized, that is $\varepsilon_{C}\circ\lambda=\varepsilon_{\widetilde{C}}$. 

Let $p:\widetilde{C}\to X$ denote the canonical projection. Then,
a right coaction $\rho_{r}$ of $C$ on $X$ is defined in a unique
way such that 
\begin{equation}
\rho_{r}\circ p=(p\ot\lambda)\circ\Delta_{\widetilde{C}}.\label{eq:rho}
\end{equation}
 A left coaction $\rho_{l}$ is defined by a similar relation. In
order to characterize the comultiplication of $\ti C$, one shows
that the unique linear map $\omega:X\to C\ot C$ satisfying the equation
\begin{equation}
\omega\circ p=(\lambda\ot\lambda)\circ\Delta_{\widetilde{C}}-\Delta_{C}\circ\lambda\label{eq:omega}
\end{equation}
 is a $2$-cocyle with coefficients in $X$, i.e., 
\begin{equation}
(C\ot\omega)\circ\rho_{l}-(\Delta_{C}\ot C)\circ\omega+(C\ot\Delta_{C})\circ\omega-(\omega\ot C)\circ\rho_{r}=0.\label{eq:2-cocycle}
\end{equation}
We use the Sweedler notation:

\[
\rho_{l}(x)=\sum x\u{-1}\ot x\u 0\qquad\rho_{r}(x)=\sum x\u 0\ot x\u 1\qquad\omega(x)=\sum\omega_{1}(x)\ot\omega_{2}(x).
\]
Let $\iota_{X}$ denote the canonical inclusion $X\subseteq C\oplus X$.
Via the identification $\widetilde{C}\simeq C\oplus X$ (as vector
spaces), which maps $z\in\widetilde{C}$ to $(\lambda(z),p(z))$,
the comultiplication $\Delta_{\widetilde{C}}$ satisfies the relation:
\begin{equation}
\begin{aligned}\Delta_{\ti C}(c,x):=\sum(c\r 1,0)\ot(c\r 2,0) & +\sum(x\u{-1},0)\ot(0,x\u 0)+\sum(0,x\u 0)\ot(x\u 1,0)+\\
 & +\sum(\omega_{1}(x),0)\ot(\omega_{2}(x),0)
\end{aligned}
\label{eq:Delta_tild}
\end{equation}
The normalization condition we imposed on $\lambda$ is necessary
for $\omega$ to be a \emph{normalized} cocycle, i.e., 
\begin{equation}
\sum\varepsilon_{C}(\omega_{1}(x))\omega_{2}(x)=0=\sum\omega_{1}(x)\varepsilon_{C}(\omega_{2}(x)).\label{eq:omega-normalizat}
\end{equation}
In turn, $\omega$ needs to be normalized because only in this case
does $\varepsilon_{\widetilde{C}}$ satisfy the relation $\varepsilon_{\widetilde{C}}(c,x):=\varepsilon_{C}(c)$.

If $\widetilde{C}$ is cocommutative, it is easy to see that $\rho_{l}(x)=\sum x\u 1\ot x\u 0$,
i.e., $X$ is a \emph{symmetric} bicomodule. Also, for cocommutative
coalgebras, the corresponding $2$-cocycle $\omega$ is \emph{symmetric},
so 
\begin{equation}
\sum\omega_{1}(x)\ot\omega_{2}(x)=\sum\omega_{2}(x)\ot\omega_{1}(x).\label{eq:omega_simetric}
\end{equation}
Conversely, given a coalgebra $C$, a $C$-bicomodule $(X,\rho_{l},\rho_{r})$
and a normalized $2$-cocycle $\omega$ with coefficients in $X$,
the direct sum $\widetilde{C}=C\oplus X$ becomes a coalgebra with
respect to the comultiplication \eqref{eq:Delta_tild}. The counit
maps $(c,x)$ to $\varepsilon_{C}(c)$.  Of course, $\iota$ is an
extension of $C$ with cokernel $X$ and the canonical projection $\lambda$
onto $X$ is a normalized retract of $\iota$.  With respect to this
normalized retract, the corresponding bicomodule and $2$-cocycle
are precisely $(X,\rho_{l},\rho_{r})$ and $\omega$, respectively.

Obviously, if $C$ is cocommutative and $X$ and $\omega$ are symmetric,
then $\ti C$ is cocommutative as well. 
\end{claim}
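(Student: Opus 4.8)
The plan is to verify directly that the comultiplication $\Delta_{\ti C}$ defined by \eqref{eq:Delta_tild} is cocommutative; since $\ti C=C\oplus X$ is already known (from the converse construction in this subsection) to be a coalgebra with this comultiplication, cocommutativity is all that remains to be shown. Writing $\tau$ for the flip $\tau(a\ot b):=b\ot a$ on $\ti C\ot\ti C$, I would apply $\tau$ to the four summands of $\Delta_{\ti C}(c,x)$ and match the outcome, summand by summand, with $\Delta_{\ti C}(c,x)$ itself.

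Applying $\tau$ to \eqref{eq:Delta_tild} gives
\[
\begin{aligned}
(\tau\circ\Delta_{\ti C})(c,x)={}&\sum(c\r 2,0)\ot(c\r 1,0)+\sum(0,x\u 0)\ot(x\u{-1},0)\\
&+\sum(x\u 1,0)\ot(0,x\u 0)+\sum(\omega_2(x),0)\ot(\omega_1(x),0).
\end{aligned}
\]
I would then check the four summands one at a time. The purely $C$-diagonal summand is unchanged because $C$ is cocommutative: $\sum c\r 2\ot c\r 1=\sum c\r 1\ot c\r 2$. The $\omega$-summand is unchanged by \eqref{eq:omega_simetric}, which is exactly $\sum\omega_2(x)\ot\omega_1(x)=\sum\omega_1(x)\ot\omega_2(x)$. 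The two mixed summands are interchanged by $\tau$, and each must reproduce the other; this is precisely the symmetry of the bicomodule $X$, which says $\rho_l=\tau\circ\rho_r$, that is $\sum x\u{-1}\ot x\u 0=\sum x\u 1\ot x\u 0$ (with $\rho_r(x)=\sum x\u 0\ot x\u 1$), and hence also, after flipping, $\sum x\u 0\ot x\u{-1}=\sum x\u 0\ot x\u 1$. The former identifies $\sum(x\u 1,0)\ot(0,x\u 0)$ with the summand $\sum(x\u{-1},0)\ot(0,x\u 0)$ of $\Delta_{\ti C}$, and the latter identifies $\sum(0,x\u 0)\ot(x\u{-1},0)$ with the summand $\sum(0,x\u 0)\ot(x\u 1,0)$. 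Therefore $\tau\circ\Delta_{\ti C}=\Delta_{\ti C}$, so $\ti C$ is cocommutative.

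I do not expect any genuine obstacle: this is a routine bookkeeping computation, which is why the statement is phrased as ``obviously''. The only point that needs care is keeping the Sweedler conventions straight --- which tensor factor of each summand of \eqref{eq:Delta_tild} sits in $C$ and which in $X$, together with the exact meaning of ``$X$ symmetric'' and ``$\omega$ symmetric''. Once these are fixed, the four summands pair off (the $C$-diagonal and the $\omega$ summands with themselves, the two mixed summands with each other) and the verification closes at once.
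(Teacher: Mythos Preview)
Your verification is correct and is exactly the routine computation the paper has in mind; the paper itself gives no proof beyond the word ``Obviously'', and your summand-by-summand matching under the flip $\tau$ is the natural (and essentially only) way to unpack that word.
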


\begin{claim}[Classification of deformations for cocommutative coalgebra extensions]
\label{claim:.Def_coalg_ext} We now fix an extension of cocommutative
coalgebras $\iota:C\to C\oplus X$, corresponding to a symmetric $C$-bicomodule
$X$ and a symmetric normalized $2$-cocycle $\omega$.  We are going
to describe the set $\Dm\cti$ in this setting. Let $\widetilde{m}$
be an $\iota$-factorization of $m$; which is regarded simply as
a linear map. Then
\[
\widetilde{m}(c,x)=\widetilde{m}(\iota(c)+\iota_{X}(x))=\widetilde{m}(c,0)+\widetilde{m}(0,x).
\]
By definition of $\iota$-factorizations, $\widetilde{m}\circ\iota=m$. 
Thus,
\[
\widetilde{m}(c,x)=m(c)+(\widetilde{m}\circ\iota_{X})(x)=m(c)+\widetilde{m}_{X}(x),
\]
where $\widetilde{m}_{X}:=\widetilde{m}\circ\iota_{X}$. Taking into
account Equation \eqref{eq:Delta_tild} and the fact that $\sum x\u{-1}\ot x\u 0=\sum x\u 1\ot x\u 0$,
we obtain the following two identities:
\begin{alignat*}{1}
[\widetilde{m}*(\widetilde{m}\ot A)](c,x)=\sum & m(c\r 1)\circ\big(m(c\r 2)\otimes A\big)+\sum\widetilde{m}_{X}(x\u 0)\circ\big(m(x\u 1)\otimes A\big)+\\
 & +\sum m(x\u 1)\circ\big(\widetilde{m}_{X}(x\u 0)\ot A\big)+\sum m(\omega_{1}(x))\circ\big(m(\omega_{2}(x))\ot A\big),\\{}
[\widetilde{m}*(A\ot\widetilde{m})](c,x)=\sum & m(c\r 1)\circ\big(A\ot m(c\r 2)\big)+\sum\widetilde{m}_{X}(x\u 0)\circ\big(A\ot m(x\u 1)\big)+\\
 & +\sum m(x\u 1)\circ\big(A\ot\widetilde{m}_{X}(x\u 0)\big)+\sum m(\omega_{1}(x))\circ\big(A\ot m(\omega_{2}(x))\big).
\end{alignat*}
For an arbitrary element $\nu\in\cat V(X,\mbb A2)$, we define $d^{2}(\nu)\in\cat V(X,\mbb A3)$
by
\begin{alignat}{1}
d^{2}(\nu)(x):=\sum & m(x\u 1)\circ\big(A\ot\nu(x\u 0)\big)-\sum\nu(x\u 0)\circ\big(m(x\u 1)\otimes A\big)+\label{eq:d^2}\\
 & +\sum\nu(x\u 0)\circ\big(A\ot m(x\u 1)\big)-\sum m(x\r 1)\circ\big(\nu(x\u 0)\ot A\big).\nonumber 
\end{alignat}
 Let $\zeta:X\to\mbb A3$ denote the $\Bbbk$-linear map given by:
\end{claim}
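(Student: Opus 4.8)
The claim is evidently being set up so that the associativity defect of an $\iota$-factorization $\widetilde{m}$ of $m$ is measured, on the $X$-component, by $d^{2}$, with the obstruction coming from the cocycle $\omega$. Matching the two displayed identities against the definition of $d^{2}$ one is led to define
\[
\zeta(x):=\sum m(\omega_{1}(x))\circ\big(m(\omega_{2}(x))\ot A\big)-\sum m(\omega_{1}(x))\circ\big(A\ot m(\omega_{2}(x))\big),
\]
and the content that closes the claim is the \emph{generalized Maurer--Cartan equation}: an $\iota$-factorization $\widetilde{m}$ of $m$, equivalently its restriction $\widetilde{m}_{X}=\widetilde{m}\circ\iota_{X}\in\cat V(X,\mbb A2)$, defines an object of $\Dm\cti$ if and only if $d^{2}(\widetilde{m}_{X})=\zeta$. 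So the plan is to derive the two displayed identities, then subtract them and read off this equivalence.

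For the two identities I would expand $[\widetilde{m}*(\widetilde{m}\ot A)](c,x)=\sum\widetilde{m}\big((c,x)\r 1\big)\circ\big(\widetilde{m}\big((c,x)\r 2\big)\ot A\big)$ using formula \eqref{eq:Delta_tild} for $\Delta_{\widetilde{C}}(c,x)$, the identifications $\widetilde{m}(c,0)=m(c)$ and $\widetilde{m}(0,x)=\widetilde{m}_{X}(x)$, and the symmetry $\sum x\u{-1}\ot x\u 0=\sum x\u 1\ot x\u 0$ to merge the two mixed terms. The four summands of \eqref{eq:Delta_tild} then produce exactly the four summands on the right-hand side of the first identity; the computation of $[\widetilde{m}*(A\ot\widetilde{m})](c,x)$ is the same word for word with $(m(-)\ot A)$ replaced by $(A\ot m(-))$. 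This step is pure bookkeeping once \eqref{eq:Delta_tild} is available.

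Since both sides are $\Bbbk$-linear in $(c,x)\in C\oplus X\simeq\widetilde{C}$, the relation $\widetilde{m}*(\widetilde{m}\ot A)=\widetilde{m}*(A\ot\widetilde{m})$ in $\M_{\widetilde{C}}$ holds if and only if it holds after restricting to the summand $C$ and to the summand $X$ separately. On $C$ the two identities collapse to $\sum m(c\r 1)\circ(m(c\r 2)\ot A)=\sum m(c\r 1)\circ(A\ot m(c\r 2))$, which is nothing but the associativity of $(A,m)$ as an algebra in $\M_{C}$, hence automatic. On $X$, subtracting the two identities and moving the $\widetilde{m}_{X}$-terms to one side and the $\omega$-terms to the other gives precisely $d^{2}(\widetilde{m}_{X})(x)=\zeta(x)$, by the very definitions of $d^{2}$ in \eqref{eq:d^2} and of $\zeta$ above. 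This establishes the desired description of $\Dm\cti$ and completes the claim.

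Looking further ahead, the substantive results these definitions prepare for are that $\zeta$ is always a $3$-cocycle of the complex $\mrm[C][*]$, that it is a coboundary exactly when $\Dm\cti\neq\emptyset$, and that, when deformations exist, $\Dm\cti/_{\sim}$ is in bijection with $\mrm[H][2]$. The coboundary statement is then immediate from the equation above ($\zeta=d^{2}(\nu)$ for some $\nu$ iff there is an associative $\widetilde{m}$ with $\widetilde{m}_{X}=\nu$). For the bijection with $\mrm[H][2]$ one unwinds Definition \ref{def:echiv_deformation}: an equivalence $h\in\M_{\widetilde{C}}(A,A)$ with $h\circ\iota=\varepsilon_{C}(-)I_{A}$ is necessarily of the form $h=\id_{A}+\eta\circ p$ with $\eta\in\cat V(X,\mbb A1)$ (and such an $h$ is automatically invertible by Takeuchi Lemma), while the transported product $\widetilde{m}_{h}$ translates into $\nu'-\nu=d^{1}(\eta)$; hence, once non-empty, the solution set of the Maurer--Cartan equation modulo equivalence is a torsor over $\mrm[H][2]$. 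I expect the main obstacle to be the cocycle identity $d^{3}(\zeta)=0$: this should reduce to a direct, somewhat long computation using the $2$-cocycle identity \eqref{eq:2-cocycle} for $\omega$ together with the associativity of $m$ in $\M_{C}$ — alternatively, one can recognise $-\zeta$ as the associativity defect of the trivial factorization ($\widetilde{m}_{X}=0$) and deduce the cocycle property from the general Gerstenhaber-type mechanism that makes such defects cocycles, once the differential on $\mrm[C][*]$ has been set up.
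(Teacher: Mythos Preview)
Your argument is correct and matches the paper's approach essentially line for line: expand both sides of the associativity condition via \eqref{eq:Delta_tild}, use the symmetry of the bicomodule to merge the mixed terms, note that the $C$-component is automatic by associativity of $m$, and read off the Maurer--Cartan equation on $X$. The one discrepancy is a sign convention: the paper defines
\[
\zeta(x):=\sum m(\omega_{1}(x))\circ\big(A\ot m(\omega_{2}(x))\big)-\sum m(\omega_{1}(x))\circ\big(m(\omega_{2}(x))\ot A\big),
\]
i.e.\ the negative of your $\zeta$, and accordingly writes the Maurer--Cartan equation as $d^{2}(\widetilde{m}_{X})+\zeta=0$ rather than $d^{2}(\widetilde{m}_{X})=\zeta$. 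This is purely conventional and your version is internally consistent. Your forward-looking remarks on the equivalence relation (via $h=\id_{A}+\eta\circ p$ and $d^{1}$), on the torsor structure over $\mrm[H][2]$, and on the proof that $\zeta$ is a $3$-cocycle also accurately anticipate what the paper does; the paper carries out the $d^{3}(\zeta)=0$ computation directly by rewriting $\zeta\circ p$ in terms of $m\circ\lambda$ and expanding each coface map, rather than invoking a general Gerstenhaber-type mechanism.
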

\begin{equation}
\zeta(x):=\sum m(\omega_{1}(x))\circ\big(A\ot m(\omega_{2}(x))\big)-\sum m(\omega_{1}(x))\circ\big(m(\omega_{2}(x))\ot A).\label{eq:zeta}
\end{equation}
Since $m$ is associative, it results that $\widetilde{m}$ is a deformation
of $m$ if and only if $\widetilde{m}_{X}$ is a solution of the \emph{generalized
Maurer-Cartan} \emph{equation, }that is, we have: 
\begin{equation}
d^{2}(\widetilde{m}_{X})+\zeta=0.\label{eq:d^(m)=00003Dxi}
\end{equation}
Note that, in particular, the generalized Maurer-Cartan equation has
a solution if and only if $\Dm\cti\neq\emptyset$. Assuming that there
exists an $\iota$-deformation $\widetilde{m}$ of $m$, then the
set of all solutions of \eqref{eq:d^(m)=00003Dxi} coincides with
$\widetilde{m}_{X}+\mrm[Z][2]$, where $\mrm[Z][2]:=\Ker d^{2}$.
Equivalently, the mapping $\widetilde{m}'\mapsto\widetilde{m}_{X}'-\widetilde{m}_{X}$
defines a bijective function from $\Dm\cti$ to $\mrm[Z][2]$. The
inverse sends $\nu\in\mrm[Z][2]$ to $\widetilde{m}_{\nu}$, the deformation
defined by $\widetilde{m}_{\nu}(c,x)=\widetilde{m}(c,x)+\nu(x)$. 
Thus, we have established the following result.
\begin{thm}
\label{thm:Def=Sol_MCE} The generalized Maurer-Cartan equation
has a solution if and only if $\Dm\cti\neq\emptyset$. If $\iota$-deformations
of $m$ exist they correspond one-to-one with the elements of the
set $\mrm[Z][2]$. 
\end{thm}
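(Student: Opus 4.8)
The plan is to assemble Theorem~\ref{thm:Def=Sol_MCE} from the computation carried out in Subsection~\ref{claim:.Def_coalg_ext}. Fix the extension $\iota:C\to C\oplus X$ together with its symmetric $C$-bicomodule $X$ and symmetric normalized $2$-cocycle $\omega$. The first point is that an $\iota$-factorization $\widetilde{m}$ of $m$ is completely determined by the single linear map $\widetilde{m}_{X}:=\widetilde{m}\circ\iota_{X}\in\cat V(X,\mbb A2)$, through $\widetilde{m}(c,x)=m(c)+\widetilde{m}_{X}(x)$; conversely every $\nu\in\cat V(X,\mbb A2)$ defines an $\iota$-factorization in this way. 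Hence $\iota$-factorizations of $m$ are in canonical bijection with $\cat V(X,\mbb A2)$, and the whole problem reduces to deciding which $\nu$ yield an associative multiplication.

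Second, I would expand the two convolution products $\widetilde{m}*(\widetilde{m}\ot A)$ and $\widetilde{m}*(A\ot\widetilde{m})$ using the comultiplication formula~\eqref{eq:Delta_tild} for $\Delta_{\ti C}$ together with the symmetry of the bicomodule, $\sum x\u{-1}\ot x\u 0=\sum x\u 1\ot x\u 0$. This yields the two four-term identities displayed before the theorem. Subtracting them, and using the Coherence Theorem to suppress associativity constraints (as in~\eqref{eq:algebra}), the summands built only from $m$ and $\Delta_{C}$ cancel: they are precisely the two sides of $m*(m\ot A)=m*(A\ot m)$, i.e.\ the associativity of $(A,m)$ as an algebra in $\M_{C}$. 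What remains is the contribution indexed by $x$, and a term-by-term comparison shows it equals $-\bigl(d^{2}(\widetilde{m}_{X})(x)+\zeta(x)\bigr)$, with $d^{2}$ and $\zeta$ as in~\eqref{eq:d^2} and~\eqref{eq:zeta}. Therefore $\widetilde{m}$ is associative --- equivalently $(A,\widetilde{m})\in\cat A_{\ti C}$, so that $\widetilde{m}\in\Dm\cti$ --- if and only if $\widetilde{m}_{X}$ solves the generalized Maurer--Cartan equation~\eqref{eq:d^(m)=00003Dxi}. In particular $\Dm\cti\neq\emptyset$ exactly when~\eqref{eq:d^(m)=00003Dxi} is solvable, which is the first assertion.

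For the second assertion, assume $\Dm\cti\neq\emptyset$ and fix one $\iota$-deformation $\widetilde{m}$, so that $\widetilde{m}_{X}$ is a particular solution of~\eqref{eq:d^(m)=00003Dxi}. Since $d^{2}$ is $\Bbbk$-linear, the affine equation~\eqref{eq:d^(m)=00003Dxi} has solution set $\widetilde{m}_{X}+\Ker d^{2}=\widetilde{m}_{X}+\mrm[Z][2]$. Transporting this along the bijection of the first paragraph, the assignment $\widetilde{m}'\mapsto\widetilde{m}'_{X}-\widetilde{m}_{X}$ maps $\Dm\cti$ into $\mrm[Z][2]$, and its inverse sends $\nu\in\mrm[Z][2]$ to the factorization $\widetilde{m}_{\nu}$ given by $\widetilde{m}_{\nu}(c,x)=\widetilde{m}(c,x)+\nu(x)$, which lies in $\Dm\cti$ because $(\widetilde{m}_{\nu})_{X}=\widetilde{m}_{X}+\nu$ again solves~\eqref{eq:d^(m)=00003Dxi}. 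A direct check that these two assignments are mutually inverse finishes the proof.

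The only genuine work is in the second paragraph: carefully expanding the two convolution products via~\eqref{eq:Delta_tild}, tracking all four summands in each, and checking that the leftover $x$-indexed terms reassemble with the correct signs into $d^{2}(\widetilde{m}_{X})+\zeta$ rather than into a sign variant. Placing the symmetry identity for the bicomodule at the right step is what makes the left- and right-coaction contributions combine properly; once that is done, everything else is a formal consequence of the linearity of $d^{2}$.
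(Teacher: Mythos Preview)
Your proposal is correct and follows exactly the approach of the paper: the argument you describe is precisely the computation carried out in Subsection~\ref{claim:.Def_coalg_ext}, where the paper expands $\widetilde{m}*(\widetilde{m}\ot A)$ and $\widetilde{m}*(A\ot\widetilde{m})$ via~\eqref{eq:Delta_tild}, cancels the $c$-indexed terms by associativity of $m$, identifies the remaining $x$-indexed terms as the generalized Maurer--Cartan equation~\eqref{eq:d^(m)=00003Dxi}, and then uses linearity of $d^{2}$ to obtain the bijection $\widetilde{m}'\mapsto\widetilde{m}'_{X}-\widetilde{m}_{X}$ with $\mrm[Z][2]$.
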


\begin{claim}[Classification of equivalent deformations in the case of cocommutative
coalgebra extensions]
\label{claim:equiv_def_extensions} We keep the assumption that $\widetilde{C}$
is an extension of $C$, as above. Let $f\in\M_{\widetilde{C}}(A,A)$
be an arbitrary morphism. We can apply the results from Subsection \ref{fa:iTakeuchi Lemma},
for $D'=C$ and $D''=\widetilde{C}$, to prove that $f$ is invertible
if and only if its restriction to $C$ is invertible. In particular,
any $f$ satisfying $\iota^{*}(f)=\Id A{}$ is invertible in $\M_{\widetilde{C}}$. 

Let $\widetilde{m}'$ and $\widetilde{m}''$ be two  $\iota$-deformations
of $m$. We denote the corresponding solutions of the generalized
Maurer-Cartan equation by $\widetilde{m}_{X}'$ and $\widetilde{m}''_{X}$,
respectively. Let $f$ be a morphism in $\M_{\widetilde{C}}(A,A)$
such that $f(c,0)=\varepsilon_{C}(c)I_{A}$. Thus,
there exists a unique linear map $f_{X}:X\to\mbb A1$ satisfying the
relation $f(c,x)=\varepsilon_{C}(c)I_{A}+f_{X}(x)$. Hence, we have:
\begin{alignat*}{1}
(f*\widetilde{m}'')(c,x)= & \sum f(c\r 1,0)\circ\widetilde{m}''(c\r 2,0)+\sum f(x\u 1,0)\circ\widetilde{m}''(0,x\u 0)+\\
 & \qquad+\sum f(0,x\u 0)\circ\widetilde{m}''(x\u 1,0)+\sum f(\omega_{1}(x),0)\circ\widetilde{m}''(\omega_{2}(x),0)\\
= & m(c)+\widetilde{m}''_{X}(x)+\sum f_{X}(x\u 0)\circ m(x\u 1)+m\big(\sum\varepsilon_{C}(\omega_{1}(x))\omega_{2}(x)\big).\\
= & m(c)+\widetilde{m}''_{X}(x)+\sum f_{X}(x\u 0)\circ m(x\u 1).
\end{alignat*}
 Note that the last equation holds since $\omega$ is normalized.
On the other hand,
\begin{alignat*}{1}
(f\ot f)(c,x) & =\sum f(c\r 1,0)\ot f(c\r 2,0)+\sum f(x\u 1,0)\ot f(0,x\u 0)+\sum f(0,x\u 0)\ot f(x\u 1,0)+\\
 & \qquad+\sum f(\omega_{1}(x),0)\ot f(\omega_{1}(x),0)\\
 & =\varepsilon_{C}(c)I_{A}\ot I_{A}+I_{A}\ot f_{X}(x)+f_{X}(x)\ot I_{A}+\big[\sum\varepsilon_{C}(\omega_{1}(x))\varepsilon_{C}(\omega_{2}(x))\big]I_{A\ot A}\\
 & =\varepsilon_{C}(c)I_{A\ot A}+I_{A}\ot f_{X}(x)+f_{X}(x)\ot I_{A}.
\end{alignat*}
Therefore, by a straightforward computation, we get 
\begin{alignat*}{1}
\big(\widetilde{m}'*(f\ot f)\big)(c,x)=m(c) & +\widetilde{m}'_{X}(x)+\sum m(x\u 1)\circ\big(f_{X}(x\u 0)\ot A\big)+\\
 & +\sum m(x\u 1)\circ\big(A\ot f_{X}(x\u 0)\big).
\end{alignat*}
In conclusion, $\widetilde{m}'$ and $\widetilde{m}''$ are equivalent
if and only if there exists $f_{X}:X\to\mbb A1$ such that
\begin{equation}
\widetilde{m}_{X}''=\widetilde{m}'_{X}+d^{1}(f_{X}).\label{eq:d^1(f_X)}
\end{equation}
Here, for any $\nu$ in $\cat V(X,\mbb A1)$, the map $d^{1}(\nu)\in\cat V(X,\mbb A2)$
is defined by the relation:
\[
d^{1}(\nu)(x)=\sum m(x\u 1)\circ\big(A\ot\nu(x\u 0)\big)-\sum\nu(x\u 0)\circ m(x\u 1)+\sum m(x\u 1)\circ\big(\nu(x\u 0)\ot A\big).
\]
We denote the image of $d^{1}$ by $\mrm[B][2]$. We thus obtain the
following result.
\end{claim}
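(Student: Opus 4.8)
The plan is to unwind the definition of equivalence (Definition~\ref{def:echiv_deformation}) and convert the transport-of-structure identity into an inverse-free convolution identity that can be evaluated termwise on $\widetilde{C}=C\oplus X$. First I would pin down the shape of an intertwining morphism: by Takeuchi Lemma (Subsection~\ref{fa:iTakeuchi Lemma}), every $f\in\M_{\widetilde{C}}(A,A)$ whose restriction along $\iota$ equals $\Id A{}$ is automatically invertible, and, since morphisms in $\M_{\widetilde{C}}(A,A)$ are merely linear maps $\widetilde{C}\to\mbb A1$, such an $f$ is completely determined by a single linear map $f_X:X\to\mbb A1$ through $f(c,x)=\varepsilon_C(c)I_A+f_X(x)$, where $f_X:=f\circ\iota_X$. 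Because $f$ is invertible, the equivalence condition $\widetilde{m}''=(\widetilde{m}')_f=f^{-1}*\widetilde{m}'*(f\ot f)$ (see \eqref{eq:m_f}) is equivalent to the inverse-free identity $f*\widetilde{m}''=\widetilde{m}'*(f\ot f)$.

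Next I would evaluate both sides of this identity at a generic $(c,x)\in\widetilde{C}$ using the explicit comultiplication \eqref{eq:Delta_tild}, which splits $\Delta_{\widetilde{C}}(c,x)$ into a $C$-diagonal part, the two coaction parts, and an $\omega$-part. The simplifications I would exploit are: on the $C$-diagonal part $f$ acts by $\varepsilon_C(-)I_A$, so this contributes exactly $m(c)$ to both sides and cancels; the $\omega$-terms are annihilated by the normalization of $\omega$ from \eqref{eq:omega-normalizat} (on the left since $f(\omega_1(x),0)=\varepsilon_C(\omega_1(x))I_A$, on the right since $\sum\varepsilon_C(\omega_1(x))\varepsilon_C(\omega_2(x))=0$); and the symmetric-bicomodule relation $\sum x\u{-1}\ot x\u 0=\sum x\u 1\ot x\u 0$ lets me rewrite the surviving coaction terms using only $\rho_r$. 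Carrying this out, the left-hand side collapses to $m(c)+\widetilde{m}''_X(x)+\sum f_X(x\u 0)\circ m(x\u 1)$, while the right-hand side collapses to $m(c)+\widetilde{m}'_X(x)+\sum m(x\u 1)\circ(f_X(x\u 0)\ot A)+\sum m(x\u 1)\circ(A\ot f_X(x\u 0))$.

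Comparing the two normal forms, cancelling $m(c)$, and solving for $\widetilde{m}''_X$ exhibits on the right exactly the three terms of $d^1(f_X)$, which yields the criterion \eqref{eq:d^1(f_X)} and proves the forward implication. For the converse, given any $f_X:X\to\mbb A1$ with $\widetilde{m}''_X=\widetilde{m}'_X+d^1(f_X)$, I would define $f\in\M_{\widetilde{C}}(A,A)$ by $f(c,x):=\varepsilon_C(c)I_A+f_X(x)$; then $f\circ\iota=\varepsilon_C(-)I_A$ holds by construction, $f$ is invertible by Takeuchi Lemma, and the evaluations above, now combined with the hypothesis, give $f*\widetilde{m}''=\widetilde{m}'*(f\ot f)$, i.e.\ $\widetilde{m}''=(\widetilde{m}')_f$, so $\widetilde{m}'\sim\widetilde{m}''$.

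The hard part will be the careful termwise expansion of $\widetilde{m}'*(f\ot f)$: one must apply the convolution- and tensor-product formulas of $\M_{\widetilde{C}}$ to each of the four summands of $\Delta_{\widetilde{C}}(c,x)$ and verify that, after invoking counitality, the normalization of $\omega$, and bicomodule symmetry, only the two right-coaction terms survive. Once both sides are brought into the displayed normal form, reading off $d^1(f_X)$ is immediate, and the equivalence of the two conditions follows at once.
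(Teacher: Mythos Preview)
Your proposal is correct and follows essentially the same route as the paper: reduce the equivalence $\widetilde{m}''=(\widetilde{m}')_f$ to the inverse-free identity $f*\widetilde{m}''=\widetilde{m}'*(f\ot f)$, then evaluate each side termwise using the comultiplication \eqref{eq:Delta_tild}, the normalization of $\omega$, and bicomodule symmetry to arrive at the displayed normal forms and read off $d^1(f_X)$. The only addition is that you spell out the converse direction explicitly, which the paper leaves implicit in its ``if and only if'' statement.
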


\begin{thm}
\label{thm:Equiv_Def} Two deformations $\widetilde{m}'$ and $\widetilde{m}''$
of $(A,m)$ are equivalent if and only if the corresponding solutions
of the generalized Maurer-Cartan equation satisfies the relation $\widetilde{m}'_{X}-\widetilde{m}_{X}''\in\mrm[B][2]$. 
\end{thm}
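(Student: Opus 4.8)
The plan is to unwind Definition~\ref{def:echiv_deformation} and to observe that the explicit convolution computations already carried out in Subsection~\ref{claim:equiv_def_extensions} contain the whole argument; what remains is only to rephrase them in terms of the subspace $\mrm[B][2]=\im d^{1}$. First I would recall that $\widetilde{m}'\sim\widetilde{m}''$ means that there is an isomorphism $h\in\M_{\widetilde{C}}(A,A)$ with $\widetilde{m}''=\widetilde{m}'_{h}=h^{-1}*\widetilde{m}'*(h\ot h)$ and $h\circ\iota=\varepsilon_{C}(-)I_{A}$. The latter condition says precisely that $\iota^{*}(h)=\Id A{}$, so by the Takeuchi Lemma argument recalled at the start of Subsection~\ref{claim:equiv_def_extensions} (applied with $D'=C$, $D''=\widetilde{C}$) any such $h$ is automatically invertible in $\M_{\widetilde{C}}$; hence the word ``isomorphism'' adds nothing, and, multiplying by $h$ on the left, $\widetilde{m}''=\widetilde{m}'_{h}$ is equivalent to $h*\widetilde{m}''=\widetilde{m}'*(h\ot h)$.

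Next, using the identification $\widetilde{C}\simeq C\oplus X$, I would write $h(c,x)=\varepsilon_{C}(c)I_{A}+h_{X}(x)$ with $h_{X}:=h\circ\iota_{X}:X\to\mbb A1$, which is legitimate exactly because $h(c,0)=\varepsilon_{C}(c)I_{A}$. Evaluating both sides of $h*\widetilde{m}''=\widetilde{m}'*(h\ot h)$ at $(c,x)$ by means of the comultiplication formula~\eqref{eq:Delta_tild}, the symmetry of the $C$-bicomodule $X$, and the normalization~\eqref{eq:omega-normalizat} of $\omega$, is exactly the triple of displayed computations in Subsection~\ref{claim:equiv_def_extensions}: they give $(h*\widetilde{m}'')(c,x)=m(c)+\widetilde{m}''_{X}(x)+\sum h_{X}(x\u 0)\circ m(x\u 1)$ and $(\widetilde{m}'*(h\ot h))(c,x)=m(c)+\widetilde{m}'_{X}(x)+\sum m(x\u 1)\circ(h_{X}(x\u 0)\ot A)+\sum m(x\u 1)\circ(A\ot h_{X}(x\u 0))$. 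Cancelling the common summand $m(c)$ and reading off the $x$-dependent parts turns the identity into $\widetilde{m}''_{X}=\widetilde{m}'_{X}+d^{1}(h_{X})$, with $d^{1}$ the map defined in that subsection. Thus $\widetilde{m}'\sim\widetilde{m}''$ if and only if there exists $\nu\in\cat V(X,\mbb A1)$ with $\widetilde{m}''_{X}-\widetilde{m}'_{X}=d^{1}(\nu)$.

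It then only remains to translate this into membership in $\mrm[B][2]$. For the forward direction, an equivalence yields $h_{X}$ with $\widetilde{m}''_{X}-\widetilde{m}'_{X}=d^{1}(h_{X})\in\im d^{1}=\mrm[B][2]$, and since $\mrm[B][2]$ is a linear subspace this is the same as $\widetilde{m}'_{X}-\widetilde{m}''_{X}\in\mrm[B][2]$. Conversely, if $\widetilde{m}'_{X}-\widetilde{m}''_{X}=d^{1}(\nu)$ for some $\nu$, then, using linearity of $d^{1}$, I would set $h(c,x):=\varepsilon_{C}(c)I_{A}-\nu(x)$: this $h$ satisfies $h\circ\iota=\varepsilon_{C}(-)I_{A}$, is therefore invertible in $\M_{\widetilde{C}}$ by the first paragraph, has $h_{X}=-\nu$, so $\widetilde{m}'_{X}+d^{1}(h_{X})=\widetilde{m}'_{X}-d^{1}(\nu)=\widetilde{m}''_{X}$, and running the second paragraph backwards gives $h*\widetilde{m}''=\widetilde{m}'*(h\ot h)$, i.e.\ $\widetilde{m}''=\widetilde{m}'_{h}$, whence $\widetilde{m}'\sim\widetilde{m}''$. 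The computations are all routine; the only points needing a little care are matching the sign conventions in $d^{1}$ when passing between $h_{X}$ and $-\nu$, and invoking the normalization of $\omega$ so that the $\omega$-terms in $(h\ot h)$ and in $h*\widetilde{m}''$ drop out. There is essentially no genuine obstacle here — the substantive work was the convolution bookkeeping, already done in Subsection~\ref{claim:equiv_def_extensions}.
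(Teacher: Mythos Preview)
Your proposal is correct and follows essentially the same approach as the paper: the paper's proof consists precisely of the convolution computations in Subsection~\ref{claim:equiv_def_extensions}, culminating in the equivalence $\widetilde{m}'\sim\widetilde{m}''\iff \widetilde{m}''_{X}=\widetilde{m}'_{X}+d^{1}(f_{X})$ for some $f_{X}$, followed by the definition $\mrm[B][2]:=\im d^{1}$. You spell out the converse direction (constructing $h$ from a given $\nu$) more explicitly than the paper, which leaves it implicit, but this is the same argument.
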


\begin{claim}[{The cohomology $\mrm[H][*]$}]
\label{claim:C*}In this subsection, for a given algebra in $(A,m)$
in $\M_{C}$ and an extension of cocommutative coalgebras $C\subseteq\widetilde{C}$,
we relate the elements of $\mrm[Z][2]$ and $\mrm[B][2]$ to the $2$-cocycles
and the $2$-coboundaries in a certain cochain complex $\mrm[C][*]$.
Specifically, we show that the construction of $\mrm[C][*]$ can be
carried out for any right $C$-comodule, not just for $X=\coker\iota$. 

We begin by defining a semi-cosimplicial object $\mrm[C][*]$ in $\cat V$,
with coface maps $d_{i}^{n}:\mrm[C][*]\to\mrm[C][n+1]$, where $0\leq i\leq n+1$.
Recall that, by definition, the coface morphisms must satisfy the
commutation relations $d_{j}^{n+1}\circ d_{i}^{n}=d_{i}^{n+1}\circ d_{j-1}^{n}$,
for all $i<j\leq n+2$. 

We define $\mrm[C][n]$ as the vector space of linear functions from
$X$ to $\mbb An$. Recall that, by convention, $\mbb A0=\cat M(\I,A)$,
so we can set
\[
d_{i}^{0}(\nu)(x):=\begin{cases}
m(x\u 1)\circ(A\ot\nu(x\u 0)\circ r_{A}^{-1}, & \text{if }i=0;\\
m(x\u 1)\circ(\nu(x\u 0\ot A)\circ l_{A}^{-1}, & \text{if }i=1.
\end{cases}
\]
On the other hand, for $n>0$, we let:
\begin{alignat}{1}
d_{i}^{n}(\nu)(x) & :=\begin{cases}
\sum m(x\u 1)\circ\big(A\ot\nu(x\u 0\big), & \text{if }i=0;\\
\sum\nu(x\u 0)\circ\big(A^{\ot i-1}\ot m(x\u 1)\ot A^{\ot n-i}\big), & \text{if }1\leq i\leq n;\\
\sum m(x\u 1)\circ\big(\nu(x\u 0)\ot A\big), & \text{if }i=n+1.
\end{cases}\label{eq:coface}
\end{alignat}
  The commutation relations follow from standard, though somewhat
tedious, computations. As an example, we consider the case $i=0$
and $j=1$, leaving the proof of the remaining cases to the reader.
If $n>0$,  we have:

\begin{align*}
\big(d_{0}^{n+1}\circ d_{0}^{n}\big)(\nu)(x) & =\sum m(x\u 1)\circ\big(A\ot d_{0}^{n}(\nu)(x\u 0)\big)\\
 & =\sum m(x\u 2)\circ\big((A\ot m(x\u 1)\big)\circ\big(A^{\ot2}\ot\nu(x\u 0)\big)\\
 & =\sum\big(m*(A\ot m)\big)(x\u 1)\circ\big(A^{\ot2}\ot\nu(x\u 0)\big).
\end{align*}
To deduce the last equation, we used the fact that $C$ is cocommutative
and $X$ is a right $C$-comodule, so
\[
\sum(x\u 0)\u 0\ot(x\u 0)\u 1\ot x\u 1=\sum x\u 0\ot x\u 1\ot x\u 2=\sum x\u 0\ot(x\u 1)\r 1\ot(x\u 1)\r 2.
\]
On the other hand, 
\begin{align*}
\big(d_{1}^{n+1}\circ d_{0}^{n}\big)(\nu)(x) & =\sum d_{0}^{n}(\nu)(x\u 0)\circ\big(m(x\u 1)\ot A^{\ot n}\big)\\
 & =\sum m(x\u 1)\circ\big(A\ot\nu(x\u 0)\big)\circ\big(m(x\u 2)\ot A^{\ot n}\big)\\
 & =\sum m(x\u 1)\circ\big(m(x\u 2)\ot A\big)\circ\big(A^{\ot2}\ot\nu(x\u 0)\big)\\
 & =\sum\big(m*(m\ot A)\big)(x\u 1)\circ\big(A^{\ot2}\ot\nu(x\u 0)\big).
\end{align*}
Thus, $d_{1}^{n+1}\circ d_{0}^{n}$ and $d_{0}^{n+1}\circ d_{0}^{n}$
coincide, since $m$ is associative. 
\end{claim}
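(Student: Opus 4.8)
\emph{Proof plan.} What remains is to verify that the cofaces defined in \eqref{eq:coface}, together with $d_0^0$ and $d_1^0$, satisfy the semi-cosimplicial identities $d_j^{n+1}\circ d_i^n=d_i^{n+1}\circ d_{j-1}^n$ for all $0\le i<j\le n+2$; the case $i=0$, $j=1$ with $n>0$ has just been treated. My plan is to reduce every remaining case to two inputs and to organize the cases by the relative position of $i$ and $j$.

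The two inputs are: \emph{(a)} coassociativity of the coaction $\rho_r$ together with cocommutativity of $C$, which jointly say that the $k$-fold iterated coaction $X\to X\ot C^{\ot k}$ is symmetric in its $C$-factors; concretely, in any expression built from $\nu$, from $m$, and from the factors produced by repeated coaction, the $C$-arguments of the various copies of $m$ may be permuted freely. This is exactly the manipulation used in the displayed computation, where $\sum(x\u 0)\u 0\ot(x\u 0)\u 1\ot x\u 1$ is identified by coassociativity with an iterated comultiplication and its two $C$-legs are then interchanged by cocommutativity of $C$. \emph{(b)} Associativity of $m$ as an algebra in $\M_C$, written as $m*(m\ot I_A)=m*(I_A\ot m)$ in $\M_C(A^{\ot3},A)$, together with the variants obtained by tensoring these morphisms with identities.

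I would then split the verification into three groups. \textbf{Commuting cases} ($j\ge i+2$ and $(i,j)\neq(0,n+2)$): here the two multiplications introduced by the cofaces act on non-overlapping tensor slots, so both composites expand, via \eqref{eq:coface} and the bifunctoriality of $\ot$, to one and the same sum once the two $C$-legs are brought into the same order using input (a); associativity of $m$ is not needed. \textbf{Associativity cases} ($j=i+1$ with $1\le i\le n$, and the boundary pairs $(0,1)$, $(n+1,n+2)$, $(0,n+2)$): here the two multiplications land in adjacent or terminal slots, and after merging the nested coactions by coassociativity and, where the two $C$-legs occur in crossed order, swapping them by cocommutativity, one side becomes a ``padded'' instance of $m*(m\ot I_A)$ and the other the corresponding instance of $m*(I_A\ot m)$, so they agree by input (b). The pair $(0,1)$ is the computation already given; $(n+1,n+2)$ and $(0,n+2)$ follow by repeating it with the roles of left and right interchanged --- a legitimate symmetry precisely because $X$ is a symmetric bicomodule over the cocommutative coalgebra $C$, so that $\rho_l$ and $\rho_r$, hence the outer cofaces $d_0^n$ and $d_{n+1}^n$, are genuinely swapped. \textbf{The level $n=0$} must be handled on its own, since $\mrm[C][0]=\cat V(X,\M(\I,A))$ and $d_0^0,d_1^0$ carry the unit constraints $r_A^{-1},l_A^{-1}$; the single relation $d_j^1\circ d_i^0=d_i^1\circ d_{j-1}^0$ for $0\le i<j\le 2$ reduces, after erasing these constraints by the Triangle Axiom (see \ref{cl:CohThm}), once more to the associativity of $m$.

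The hard part here is purely bookkeeping, not mathematics: every coface consumes one coaction, so a double composite carries $\rho_r$ iterated twice, and reconciling the two sides of an identity forces one to thread the two resulting $C$-legs through coassociativity and cocommutativity while keeping the Sweedler indices aligned --- exactly the delicate step highlighted in the worked case. Isolating inputs (a) and (b) up front and exploiting the left--right reflection of the monoidal category is what keeps the roughly $\binom{n+3}{2}$ cases from proliferating.
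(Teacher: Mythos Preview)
Your plan is correct and is essentially the approach the paper has in mind: the paper only works out the representative case $(i,j)=(0,1)$, invoking precisely your two inputs (coassociativity plus cocommutativity of the iterated coaction, and associativity of $m$ in $\M_C$), and then declares the remaining identities ``standard, though somewhat tedious''; your trichotomy into commuting cases, associativity cases, and the level $n=0$ is simply a clean bookkeeping of those omitted verifications. One small slip: at level $n=0$ there are three identities ($0\le i<j\le 2$), not ``the single relation''; all three do reduce, after absorbing $l_A^{-1}$ and $r_A^{-1}$ via the Triangle Axiom, to instances of $m*(m\ot\Id A{})=m*(\Id A{}\ot m)$ together with a swap of the two $C$-legs by cocommutativity, exactly as you say.
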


\begin{claim}
To our semi-cosimplicial object corresponds a complex with cochain
groups $\mrm[C][*]$ and differentials $d_{X}^{*}:=\sum_{i=0}^{n+1}(-1)^{i}d_{i}^{*}$.
The vector spaces of $n$-cocycles and $n$-coboundaries are denoted
by $\mrm[Z][n]$ and $\mrm[B][n]$, respectively. This notation is
consistent with the one we used in Subsections \ref{claim:.Def_coalg_ext} and
\ref{claim:equiv_def_extensions}, since $d_{X}^{1}$ and $d_{X}^{2}$
are equal to $d^{1}$ and $d^{2}$. In particular, we have $d^{2}\circ d^{1}=0$.
We denote the cohomology of $\mrm[C][*]$ by $\mrm[H][*]$.
\end{claim}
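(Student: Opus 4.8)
The plan is to read the statement as an instance of the standard passage from a semi-cosimplicial object to its alternating-sum cochain complex, and then to match the resulting differentials against the maps $d^{1}$ and $d^{2}$ already in use.

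First I would isolate the purely formal lemma: for any semi-cosimplicial object $(K^{n},d_{i}^{n})_{0\le i\le n+1}$ in an additive category (in particular in $\cat V$), i.e.\ whenever the cofaces satisfy $d_{j}^{n+1}\circ d_{i}^{n}=d_{i}^{n+1}\circ d_{j-1}^{n}$ for all $i<j$, the maps $d^{n}:=\sum_{i=0}^{n+1}(-1)^{i}d_{i}^{n}$ satisfy $d^{n+1}\circ d^{n}=0$. The proof is the classical sign count: one expands $d^{n+1}\circ d^{n}=\sum_{i,j}(-1)^{i+j}d_{j}^{n+1}d_{i}^{n}$, splits the double sum into the terms with $i<j$ and those with $i\ge j$, rewrites each term of the first block via the semi-cosimplicial identity as $(-1)^{i+j}d_{i}^{n+1}d_{j-1}^{n}$, and observes that the reindexing $(i,j-1)\mapsto(j,i)$ identifies this block with the negative of the second block, so the total vanishes. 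Since the commutation relations for the coface maps of $\mrm[C][*]$ were established in Subsection~\ref{claim:C*} (the case $(i,j)=(0,1)$ displayed there, the remaining ones being of the same nature), this lemma applies verbatim and gives $d_{X}^{n+1}\circ d_{X}^{n}=0$ for all $n\ge0$; hence $(\mrm[C][*],d_{X}^{*})$ is a genuine cochain complex and the cohomology $\mrm[H][*]$ is well defined.

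Next I would check the asserted compatibility in low degrees by directly expanding \eqref{eq:coface}. For $n=1$ the alternating sum is $d_{X}^{1}(\nu)(x)=\sum m(x\u 1)\circ(A\ot\nu(x\u 0))-\sum\nu(x\u 0)\circ m(x\u 1)+\sum m(x\u 1)\circ(\nu(x\u 0)\ot A)$, which is precisely the formula for $d^{1}$ recorded in Subsection~\ref{claim:equiv_def_extensions}. For $n=2$ one obtains $d_{X}^{2}(\nu)(x)=\sum m(x\u 1)\circ(A\ot\nu(x\u 0))-\sum\nu(x\u 0)\circ(m(x\u 1)\ot A)+\sum\nu(x\u 0)\circ(A\ot m(x\u 1))-\sum m(x\u 1)\circ(\nu(x\u 0)\ot A)$, which is exactly \eqref{eq:d^2}. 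Consequently the cocycle and coboundary spaces of $(\mrm[C][*],d_{X}^{*})$ in degrees $1$ and $2$ coincide with the $\mrm[Z][2]$ and $\mrm[B][2]$ used in Subsections~\ref{claim:.Def_coalg_ext} and~\ref{claim:equiv_def_extensions}, and specializing the lemma to $n=1$ yields $d^{2}\circ d^{1}=0$, i.e.\ $\mrm[B][2]\subseteq\mrm[Z][2]$ — exactly what makes the classification statements there, and the definition of $\mrm[H][2]$, meaningful.

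The genuine work therefore sits inside Subsection~\ref{claim:C*}, in verifying the full family $d_{j}^{n+1}\circ d_{i}^{n}=d_{i}^{n+1}\circ d_{j-1}^{n}$. These split into a handful of types according to whether the indices land in the two ``outer'' slots $0$ and $n+1$ (where the coface puts $m(x\u 1)$ on the outside of $\nu$) or among the ``inner'' slots (where $m(x\u 1)$ is inserted between tensor factors of the source). Each case rests on the two ingredients from the displayed computation: coassociativity of the right $C$-coaction on $X$ together with cocommutativity of $C$ — which is what licenses the manipulation $\sum(x\u 0)\u 0\ot(x\u 0)\u 1\ot x\u 1=\sum x\u 0\ot x\u 1\ot x\u 2=\sum x\u 0\ot(x\u 1)\r 1\ot(x\u 1)\r 2$ — and associativity of the algebra $(A,m)$, cf.\ \eqref{eq:algebra}, wherever two copies of $m$ collide. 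I expect no real obstacle, only the bookkeeping nuisance of the cases involving the slot $n+1$ and of the degenerate degree $n=0$, where the unit constraints $r_{A},l_{A}$ hidden in $d_{i}^{0}$ must be carried along so that the relations through $\mrm[C][0]$ stay consistent with the Coherence Theorem conventions fixed in~\ref{cl:CohThm}.
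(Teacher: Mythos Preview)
Your proposal is correct and matches the paper's approach; in fact the paper offers no separate proof of this claim at all, treating the passage from the semi-cosimplicial identities of Subsection~\ref{claim:C*} to the alternating-sum complex as standard, and the identifications $d_{X}^{1}=d^{1}$, $d_{X}^{2}=d^{2}$ as immediate from the definitions. Your write-up simply spells out these implicit steps.
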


\begin{rem}
\label{rem:standard_complex}Let $C:=\Bbbk$ and let $X$ be the trivial
$C$-comodule. If $m^{0}\in\mbb A2$ is an algebra in $\M$, we can
consider the complex $\mrm[C][*]$, where $m:C\to\mbb A2$ is the
unique multiplication in $\M_{C}$ such that $m(1)=m^{0}$. To avoid
confusion, we will denote this complex by $\mathrm{C^{*}}(A,m^{0})$.
Furthermore, in this particular case, we will use the notation $\partial_{i}^{n}:=d_{i}^{n}$. 

The cohomology of $\mathrm{C^{*}}(A,m^{0})$ is precisely the Hochschild
cohomology of $(A,m^{0})$ with coefficients in $A$, regarded as
a bimodule over itself. For the definition of Hochschild cohomology
of algebras in monoidal categories see, for instance, \cite[§1.24]{AMS}.

We will later need a few elementary properties of the complex $\mrm[C][*]$.
These are stated and proved in the following lemma.
\end{rem}

\begin{lem}
\label{lem:prop_C^*_X} Let $C^{*}$ denote the dual algebra of $C$. 
\begin{enumerate}
\item[\emph{(a)}]  The complex $\mrm[C][*]$ is a cochain complex in the category of
right $C^{*}$-modules. 
\item[\emph{(b)}]  If $X$ is the direct sum of a family $\{X_{i}\}_{i\in I}$ then
$\mrm[C][*]\simeq\prod_{i\in I}\mathrm{C}_{X_{i}}^{*}(A,m)$.
\end{enumerate}
\end{lem}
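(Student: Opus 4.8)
The plan is to verify the two assertions of Lemma~\ref{lem:prop_C^*_X} separately, both of which amount to making explicit the obvious structures already implicit in the definition of the coface maps \eqref{eq:coface}.

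For part (a), I would first recall that for a right $C$-comodule $X$ with coaction $\rho_r(x)=\sum x\u 0\ot x\u 1$, the linear space $\cat V(X,W)$ is a right module over the convolution algebra $C^*$ for any vector space $W$, via $(\nu\cdot\alpha)(x):=\sum\alpha(x\u 1)\,\nu(x\u 0)$ for $\alpha\in C^*$; this is standard and uses only coassociativity of $\rho_r$ together with the counit axiom. Applying this with $W=\mbb An$ equips each cochain group $\mrm[C][n]=\cat V(X,\mbb An)$ with a right $C^*$-action. The content of (a) is then that every coface map $d_i^n$ is $C^*$-linear, hence so is the differential $d_X^n$. Here I would use cocommutativity of $C$, which guarantees that $X$ is a symmetric bicomodule, so that the ``extra'' tensor factor $m(x\u 1)$ appearing in the formulas for $d_i^n$ can be freely moved past the $C^*$-action without a sign or order obstruction. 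Concretely, for, say, $i=0$ one checks
\[
d_0^n(\nu\cdot\alpha)(x)=\sum\alpha(x\u 1)\,m(x\u 2)\circ\big(A\ot\nu(x\u 0)\big)=\big(d_0^n(\nu)\cdot\alpha\big)(x),
\]
using $\sum x\u 0\ot x\u 1\ot x\u 2=\sum (x\u 0)\u 0\ot(x\u 0)\u 1\ot x\u 1$ (coassociativity) exactly as in the verification of the commutation relations in Subsection~\ref{claim:C*}; the middle cases $1\le i\le n$ are even easier since the comodule element sits untouched inside a tensor slot. Since this must hold for all $i$ and $\alpha\in C^*$, $d_X^n$ is a morphism of right $C^*$-modules, proving (a).

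For part (b), suppose $X=\bigoplus_{i\in I}X_i$ with each $X_i$ a subcomodule. Then $\cat V(X,\mbb An)\simeq\prod_{i\in I}\cat V(X_i,\mbb An)$ naturally, i.e.\ $\mrm[C][n]\simeq\prod_{i\in I}\mathrm{C}_{X_i}^n(A,m)$ as vector spaces. The point is that the coface maps, and hence the differentials, respect this product decomposition: each $d_i^n$ is defined ``pointwise'' in $x$ using only the coaction applied to $x$, and since the coaction preserves the decomposition (each $X_i$ being a subcomodule), the restriction of $d_i^n(\nu)$ to $X_i$ depends only on $\nu|_{X_i}$ and coincides with the corresponding coface map of $\mathrm{C}_{X_i}^*(A,m)$. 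Therefore the isomorphism $\mrm[C][*]\simeq\prod_{i\in I}\mathrm{C}_{X_i}^*(A,m)$ is an isomorphism of complexes (indeed of complexes of $C^*$-modules, combining with (a)). One small technical remark to insert is that an arbitrary direct sum of subcomodules is automatically a direct sum \emph{as comodules}, so no extra hypothesis is needed; this is where one uses that comodules over a coalgebra have the relevant (co)limit properties.

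I do not expect a genuine obstacle here: both statements are bookkeeping once the convolution $C^*$-module structure on $\cat V(X,-)$ is written down and cocommutativity is invoked in part (a). If anything is delicate it is purely notational — keeping straight the Sweedler indices when checking $C^*$-linearity of the ``outer'' coface maps $d_0^n$ and $d_{n+1}^n$, where the factor $m(x\u 1)$ is composed on the outside rather than inserted into a tensor slot — but the computation is the same one already carried out for the simplicial identities in Subsection~\ref{claim:C*}, now read with an extra functional $\alpha$ paired against the last comodule leg.
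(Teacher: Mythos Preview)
Your proposal is correct and follows essentially the same route as the paper: define the right $C^{*}$-action on $\cat V(X,\mbb An)$ via $(\nu\leftharpoonup\alpha)(x)=\sum\alpha(x\u 1)\nu(x\u 0)$, check that each coface map $d_i^n$ commutes with this action (using cocommutativity of $C$ for the outer faces), and for (b) use the canonical isomorphism $\cat V(\bigoplus_i X_i,\mbb An)\simeq\prod_i\cat V(X_i,\mbb An)$ together with the observation that $d_i^n(\nu)\restr{X_j}=d_i^n(\nu\restr{X_j})$ because each $X_j$ is a subcomodule. Your explicit flagging of where cocommutativity enters is a useful addition; the paper leaves this implicit.
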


\begin{proof}
Any right $C$-comodule is a left $C^{*}$-module. Thus $\cat V(X,V)$
is a right $C^{*}$-module with respect to the action
\[
(\nu\leftharpoonup\alpha)(x)=\nu(\alpha\rightharpoonup x),
\]
for any linear map $\nu\in\cat V(X,V)$ and $\alpha\in C^{*}$. Recall
that $\alpha\rightharpoonup x=\sum\alpha(x\u 1)x\u 0$. Therefore,
$\mrm[C][n]$ is a right $R$-module and it is not difficult to see
that: 
\begin{alignat*}{1}
d_{i}^{n}(\nu\leftharpoonup\alpha)(x) & :=\begin{cases}
\sum\alpha(x\u 1)m(x\u 2)\circ\big(A\ot\nu(x\u 0\big), & \text{if }i=0;\\
\sum\nu(x\u 0)\circ\big(A^{\ot i-1}\ot\alpha(x\u 1)m(x\u 2)\ot A^{\ot n-i}\big), & \text{if }1\leq i\leq n;\\
\sum\alpha(x\u 1)m(x\u 2)\circ\big(\nu(x\u 0)\ot A\big), & \text{if }i=n+1.
\end{cases}
\end{alignat*}
Expanding $\big(d_{i}^{n}(\nu)\leftharpoonup\alpha\big)(x)$, we obtain
the same result, proving that the coface maps $d_{i}^{n}$ are $C^{*}$-linear.
Therefore, $\mrm[C][*]$ is a cochain complex in the category of right
$C^{*}$-modules. 

For the second part of the lemma, we observe that, for every $n$,
the map $\nu\mapsto\{\nu\res{X_{i}}{}\}_{i\in I}$ defines an isomorphism
of $C^{*}$-modules between $\mrm[C][n]$ and $\prod\mathrm{C}_{X_{i}}^{n}(A,m)$.
A straightforward computation shows that $d_{i}^{n}(\nu)\res{X_{i}}{}=d_{i}^{n}(\nu\res{X_{i}}{})$,
which implies that the aforementioned isomorphisms are compatible
with the differential maps.
\end{proof}
\begin{thm}
\label{thm:main_result_for_extensions} Let $\Dm\cti/_{\sim}$ denote
the set of equivalence classes of $\iota$-deformations of $(A,m)$.
\begin{enumerate}
\item[\emph{(a)}] The cochain $\zeta$ is always a $3$-cocycle.
\item[\emph{(b)}] The cohomology class $\zeta$  is trivial if and only if $\Dm\cti\neq\emptyset$.
\item[\emph{(c)}] The bijective map from Theorem \ref{thm:Def=Sol_MCE} induces
a one-to-one correspondence between $\Dm\cti/_{\sim}$ and $\mrm[H][2]$.
\end{enumerate}
\end{thm}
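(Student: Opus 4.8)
The plan is to prove part (a) by a short structural argument and then to read off parts (b) and (c) formally, using Theorem~\ref{thm:Def=Sol_MCE} and Theorem~\ref{thm:Equiv_Def}. I expect (a) to be the only real obstacle. For (a), the guiding idea is that $\zeta$ is nothing but the associativity defect of the \emph{trivial} $\iota$-factorization of $m$. I would put $\widetilde{m}_{0}:=m\circ\lambda\in\M_{\widetilde{C}}(A^{\ot2},A)$, where $\lambda$ is the normalized retract of $\iota$ fixed in Subsection~\ref{claim:extension}; since $\lambda\circ\iota=\id_{C}$ and $\lambda$ vanishes on $X$, this is an $\iota$-factorization of $m$ with $(\widetilde{m}_{0})_{X}=0$. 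Setting
\[
\Psi:=\widetilde{m}_{0}*(A\ot\widetilde{m}_{0})-\widetilde{m}_{0}*(\widetilde{m}_{0}\ot A)\in\M_{\widetilde{C}}(A^{\ot3},A),
\]
one has $\iota^{*}(\Psi)=0$, because $\iota^{*}(\widetilde{m}_{0})=m$ is associative in $\M_{C}$ and $\iota^{*}$ is monoidal; hence $\Psi$ vanishes on $\iota(C)$ and factors through $p:\widetilde{C}\to X$. Evaluating $\Psi$ on an element $(0,x)$ by means of~\eqref{eq:Delta_tild} — only the $\omega$-summand of $\Delta_{\widetilde{C}}(0,x)$ survives, since $\widetilde{m}_{0}$ annihilates the $X$-component — gives $\Psi\circ\iota_{X}=\zeta$, so that $\Psi=\zeta\circ p$.

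Next I would invoke the purely formal ``boundary of the associahedron'' identity: for every $\mu\in\M_{\widetilde{C}}(A^{\ot2},A)$, writing $\Psi_{\mu}:=\mu*(A\ot\mu)-\mu*(\mu\ot A)$, the five-term telescoping relation
\[
\mu*(\Psi_{\mu}\ot A)-\Psi_{\mu}*(\mu\ot A^{\ot2})+\Psi_{\mu}*(A\ot\mu\ot A)-\Psi_{\mu}*(A^{\ot2}\ot\mu)+\mu*(A\ot\Psi_{\mu})=0
\]
holds identically in $\M_{\widetilde{C}}(A^{\ot4},A)$; its verification is a direct cancellation using only bifunctoriality of $\ot$ and the Coherence Theorem, and in particular uses neither associativity of $\mu$ nor the existence of a unit. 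Applying this with $\mu=\widetilde{m}_{0}$, so $\Psi_{\mu}=\Psi=\zeta\circ p$, and then precomposing the resulting equality with $\iota_{X}:X\to\widetilde{C}$ (i.e. evaluating on a general $(0,x)$ via~\eqref{eq:Delta_tild}), each of the five terms collapses to a single Sweedler sum, because $\widetilde{m}_{0}$ kills the $X$-component while $\zeta\circ p$ kills the $C$-component; after rewriting $\rho_{l}$ through $\rho_{r}$ via the symmetry of the bicomodule $X$, each term becomes one of the cofaces $d_{i}^{3}(\zeta)(x)$ of~\eqref{eq:coface} with the appropriate sign, and their total is $d_{X}^{3}(\zeta)(x)$. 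Hence $d_{X}^{3}(\zeta)=0$, which is (a). The one delicate point is the bookkeeping of which summand of $\Delta_{\widetilde{C}}$ feeds which of the five terms; everything else is routine. (Alternatively, one may run the same computation with an arbitrary $\iota$-factorization $\widetilde{m}$ in place of $\widetilde{m}_{0}$; then $\Psi\circ\iota_{X}=d_{X}^{2}(\widetilde{m}_{X})+\zeta$ and one concludes using $d_{X}^{3}\circ d_{X}^{2}=0$, but the choice $\widetilde{m}_{0}$ keeps the identification $\Psi|_{X}=\zeta$ cleanest.)

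Granting (a), part (b) is immediate: since $\zeta\in\mrm[Z][3]$, its class in $\mrm[H][3]$ is trivial if and only if $\zeta\in\mrm[B][3]=\im d_{X}^{2}$, i.e. if and only if the generalized Maurer--Cartan equation $d^{2}(\nu)+\zeta=0$ has a solution $\nu$; by the discussion preceding Theorem~\ref{thm:Def=Sol_MCE} this is precisely the condition $\Dm\cti\neq\emptyset$. For (c), assuming $\Dm\cti\neq\emptyset$ I would fix $\widetilde{m}\in\Dm\cti$ and use Theorem~\ref{thm:Def=Sol_MCE}, which supplies a bijection $\Dm\cti\to\mrm[Z][2]$, $\widetilde{m}'\mapsto\widetilde{m}'_{X}-\widetilde{m}_{X}$; composing with the quotient map $\mrm[Z][2]\to\mrm[H][2]=\mrm[Z][2]/\mrm[B][2]$ yields a surjection $\Dm\cti\to\mrm[H][2]$. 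Two deformations $\widetilde{m}',\widetilde{m}''$ have the same image under this surjection exactly when $\widetilde{m}'_{X}-\widetilde{m}''_{X}\in\mrm[B][2]$, which by Theorem~\ref{thm:Equiv_Def} is exactly the relation $\widetilde{m}'\sim\widetilde{m}''$. Therefore the surjection descends to a bijection $\Dm\cti/_{\sim}\to\mrm[H][2]$, and by construction it is the one induced by the map of Theorem~\ref{thm:Def=Sol_MCE}.
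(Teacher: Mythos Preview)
Your proof is correct and follows essentially the same route as the paper: both identify $\zeta\circ p$ with the associativity defect of $m\circ\lambda$ (the paper writes this as $\zeta p=m\lambda*(A\ot m\lambda)-m\lambda*(m\lambda\ot A)$), and then verify the five-term cancellation that yields $d_X^3(\zeta)=0$; parts (b) and (c) are deduced in both cases directly from Theorems~\ref{thm:Def=Sol_MCE} and~\ref{thm:Equiv_Def}. The only cosmetic difference is packaging: you isolate the cancellation as a general ``associahedron boundary'' identity valid for arbitrary $\mu$ and then specialize, whereas the paper expands each $d_k^3(\zeta)\circ p$ explicitly in terms of $m\lambda$ and cancels term by term---the underlying computation is identical.
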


\begin{proof}
In light of Theorem \ref{thm:Def=Sol_MCE} and Theorem \ref{thm:Equiv_Def},
it remains to prove that $\zeta\in\mrm[Z][3]$. Throughout the proof, we use the simplifying notation $m\lambda:=m\circ\lambda$ and $\zeta p:=\zeta\circ p$.
For any $z\in\widetilde{C}$, using Equation \eqref{eq:omega} and
the definition of $\zeta$, we obtain:
\begin{alignat*}{1}
\zeta p(z) & =\sum m \lambda(z\r 1)\circ\big(A\ot m\lambda(z\r 2)\big)-\sum m\lambda(z\r 1)\circ\big(m\lambda(z\r 2)\ot A\big)\\
 & =[m\lambda*(A\ot m\lambda)-m\lambda*(m\lambda\ot A)](z).
\end{alignat*}
On the other hand, for $x=p(z)$,  Equation \eqref{eq:rho} can
be written as follows
\[
\sum p(z)\u 0\ot p(z)\u 1=\sum p(z\r 1)\ot\lambda(z\r 2)=\sum p(z\r 2)\ot\lambda(z\r 1).
\]
Now we can compute the value of the coface morphisms $d_{k}^{3}$
at $\zeta$. For $k=0$ we have: 
\begin{align*}
d_{0}^{3}(\zeta)(p(z)) & =\sum m(p(z)\u 1)\circ\big((A\ot\zeta(p(z)\u 0)\big)\\
 & =\sum m\lambda(z\r 1)\circ\big(A\ot\zeta p(z\r 2)\big)\\
 & =[m\lambda*\big(A\ot\zeta p\big)](z)\\
 & =\big(m\lambda*(A\ot m\lambda)*(A\ot A\ot m\lambda)-m\lambda*(A\ot m\lambda)*(A\ot m\lambda\ot A)\big)(z).
\end{align*}
Proceeding similarly, for $k\in\{1,2,3\}$, we get:
\[
d_{k}^{3}(\zeta)p=m\lambda*(A\ot m\lambda)*(A^{\ot k-1}\ot m\lambda\ot A^{\ot3-k})-m\lambda*(m\lambda\ot A)*(A^{\ot k-1}\ot m\lambda\ot A^{\ot3-k}).
\]
The last coface operator is given by 
\[
d_{4}^{3}(\zeta)p=m\lambda*(m\lambda\ot A)*(A\ot m\lambda\ot A)-m\lambda*(m\lambda\ot A)*(m\lambda\ot A\ot A).
\]
In view of the above computations, we conclude that $d^{3}(\zeta)p=\sum_{i=0}^{4}(-1)^{i}d_{i}^{3}(\zeta)p=0$.
Since $p$ is surjective, it follows that $\zeta$ is a $3$-cocycle.
\end{proof}
\begin{rem}
\label{rem:triv_ext}An extension of cocommutative coalgebras $\iota:C\to\widetilde{C}$
is said to be \emph{trivial} if $\iota$ has a left inverse $\lambda$
that is a morphism of coalgebras. In this case, the corresponding
$2$-cocycle $\omega:X\to C\ot C$, defined as in \S\ref{claim:extension},
is equal to zero. Hence the \emph{obstruction to deformation} (i.e.,
the $3$-cocycle $\zeta$) is also equal to zero. Consequently, by
Theorem \ref{thm:main_result_for_extensions}, any algebra $(A,m)$
admits an $\iota$-deformation. In fact, we can directly observe that
$\Dm\cti$ is non-empty because $m\circ\lambda$ is a deformation
of $m$. Moreover, by the proof of Theorem \ref{thm:Def=Sol_MCE},
it follows that for any $\iota$-deformation $\widetilde{m}$, there
exists a 2-cocycle $\nu$ such that
\[
\ti m(c,x)=(m\circ\lambda)(c,x)+\nu(x).
\]
The class of trivial extensions includes, for instance, all $\iota:C\to\ti C$,
where $C$ is coseparable. For the definition of coseparable coalgebras
and the fact that these extensions are trivial, see \cite{Do}. 
\end{rem}

\section{Particular cases }

In this section we will specialize some of our results to several
classes of extensions $\iota:C\to\ti C$ arising from a graded cocommutative
coalgebra. Additionally, in the graded case, we will prove that an
$\iota$-deformation of an associative algebra with unit is equivalent
to a unital deformation.
\begin{claim}
\label{claim:graded_coalg}Let $D=\oplus_{n\in\N}D^{n}$ denote a
graded cocommutative coalgebra. If $n$ is a nonnegative integer,
then the Equation \eqref{eq:extension} holds for $C=D_{<n}=:\oplus_{k<n}D^{k}$
and $\ti C=D_{\leq n}:=\oplus_{k\leq n}D^{k}$. Consequently, the
inclusion $\iota:C\to\ti C$ defines an extension of cocommutative
coalgebras. Adopting the notation from \ref{claim:extension}, we
have $X=D^{n}$ and $p$ coincides with the canonical projection of
$D_{\leq n}$ onto $D^{n}$. We choose the projection map $\lambda:\ti C\to C$
as linear retract of $\iota$. Hence, the right $C$-coaction on $X$
is given, for every $x\in X$, by:
\begin{equation}
\rho_{r}(x):=(p\ot\lambda)\big(\sum_{i=0}^{n}\sum x\r{1,i}\ot x\r{2,n-i}\big)=\sum x\r{1,n}\ot x\r{2,0}=\sum x\r{2,n}\ot x\r{1,0}.\label{eq:rho_gr}
\end{equation}
Note that the second identity follows from cocommutativity of $D$. 
Since $\rho_{r}(x)\in X\ot D^{0}$, we can view $X$ as a comodule
over both $C$ and $D^{0}$.

Furthermore, the $2$-cocycle $\omega$ turns out to be equal to:
\begin{equation}
\omega(x)=\sum_{i=1}^{n-1}\Delta^{i,n-i}(x)=\sum_{i=1}^{n-1}\sum x\r{1,i}\ot x\r{2,n-i}.\label{eq:omega_gr}
\end{equation}
Therefore, if $(A,m)$ is an associative multiplication in $\M_{C}$,
then the obstruction to deformation is given by the formula:
\begin{equation}
\zeta(x)=\sum_{i=1}^{n-1}\sum m(x\r{1,i})\circ\big(A\ot m(x\r{2,n-i})-m(x\r{2,n-i})\ot A\big).\label{eq:zeta_gr}
\end{equation}
For $n>0$, a direct application of Equation \eqref{eq:rho_gr} yields:
\begin{alignat*}{1}
d_{i}^{n}(\nu)(x) & :=\begin{cases}
\sum m(x\r{1,0})\circ\big(A\ot\nu(x\r{2,1}\big), & \text{if }i=0;\\
\sum\nu(x\r{1,0})\circ\big(A^{\ot i-1}\ot m(x\r{2,1})\ot A^{\ot n-i}\big), & \text{if }1\leq i\leq n;\\
\sum m(x\r{1,0})\circ\big(\nu(x\r{2,1})\ot A\big), & \text{if }i=n+1.
\end{cases}
\end{alignat*}
Having $\rho_{r}$, $\zeta$ and the differentials of the complex
$\mrm[C][*]$ determined,  Theorem \ref{thm:main_result_for_extensions}
can be applied in the current context. The output is identical to
the previously mentioned result, so we will omit it here and instead
focus on exploring several more specific cases.
\end{claim}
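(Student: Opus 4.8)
The statement above bundles together several explicit descriptions attached to the inclusion $\iota:D_{<n}\hookrightarrow D_{\leq n}$: that $\iota$ is a coalgebra extension, and the precise shape of the coaction $\rho_r$, the cocycle $\omega$, the obstruction $\zeta$ and the coface maps $d_i^n$. The plan is to obtain all of them by unwinding the general constructions of Subsection \ref{claim:extension} in the presence of the grading, in the following order.

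First I would check \eqref{eq:extension}. Both $D_{<n}=\oplus_{k<n}D^k$ and $D_{\leq n}=\oplus_{k\leq n}D^k$ are subcoalgebras of $D$, since the grading axiom gives $\Delta_D(D^k)\subseteq\bigoplus_{i=0}^{k}D^i\ot D^{k-i}$. For a homogeneous $z\in D^k$ with $k<n$, every tensor factor occurring in $\Delta_D(z)$ has degree $<n$, so $\Delta_D(z)\in\iota(C)\ot\iota(C)$; for $z\in D^n$, each term $D^i\ot D^{n-i}$ with $i<n$ lies in $\iota(C)\ot\ti C$ and the single term $D^n\ot D^0$ lies in $\ti C\ot\iota(C)$. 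Summing over homogeneous components yields \eqref{eq:extension}. One then records that the homogeneous projection $\lambda:D_{\leq n}\to D_{<n}$ is a normalized linear retract of $\iota$, and that under $X=\coker\iota\simeq D^n$ the canonical surjection $p$ is the projection onto $D^n$; both facts are immediate from the grading.

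Next I would compute $\rho_r$ and $\omega$. For $x\in D^n$, apply $p\ot\lambda$ to $\Delta_D(x)$: the factor $p$ kills every homogeneous summand of degree strictly below $n$, so it forces the first tensor slot into $D^n$; the factor $\lambda$ kills $D^n$, so it forces the second slot into $\oplus_{k<n}D^k$; hence only the summand in $D^n\ot D^0$ survives, giving $\rho_r(x)=\sum x\r{1,n}\ot x\r{2,0}$, which by cocommutativity of $D$ equals $\sum x\r{2,n}\ot x\r{1,0}$ --- this is \eqref{eq:rho_gr}. Likewise, since $\lambda(x)=0$ for $x\in D^n$, relation \eqref{eq:omega} collapses to $\omega(x)=(\lambda\ot\lambda)(\Delta_D(x))$, and $\lambda\ot\lambda$ annihilates precisely the two extreme summands, the one in $D^0\ot D^n$ and the one in $D^n\ot D^0$, while acting as the identity on the rest; what remains is the sum over $1\leq i\leq n-1$ of the component of $\Delta_D(x)$ in $D^i\ot D^{n-i}$, namely \eqref{eq:omega_gr}. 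Nothing further needs to be checked here: that this $\omega$ is automatically a normalized symmetric $2$-cocycle satisfying \eqref{eq:2-cocycle} and that $X$ is a symmetric bicomodule are guaranteed by the general classification recalled in Subsection \ref{claim:extension} together with the cocommutativity of $D$.

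Finally, the formulas for $\zeta$ and for the coface maps are pure substitution: inserting \eqref{eq:omega_gr} into the definition \eqref{eq:zeta} of $\zeta$ and pulling the sum over $i$ outside gives \eqref{eq:zeta_gr}, and inserting the explicit coaction $\rho_r(x)=\sum x\r{2,n}\ot x\r{1,0}$ into \eqref{eq:coface} --- so that the Sweedler leg $x\u 1\in D^0$ is read off as $x\r{1,0}$ and the leg $x\u 0\in D^n$ as $x\r{2,n}$ --- gives the stated description of the $d_i^n$; Theorem \ref{thm:main_result_for_extensions} then applies verbatim. I do not expect a genuine mathematical obstacle anywhere in this: the single point requiring real care is the bookkeeping with the doubly indexed Sweedler notation and the repeated use of cocommutativity to interchange which tensor leg carries which homogeneous degree, which has to be done consistently across \eqref{eq:rho_gr}, \eqref{eq:omega_gr}, \eqref{eq:zeta_gr} and the coface formulas.
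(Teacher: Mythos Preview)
Your proposal is correct and follows exactly the approach the paper takes: the claim is not given a separate proof in the paper but is justified inline by the same direct computations you outline, namely checking \eqref{eq:extension} from the grading axiom, reading off $\rho_r$ and $\omega$ by applying $p\ot\lambda$ and $\lambda\ot\lambda$ to $\Delta_D(x)$ and tracking which homogeneous summands survive, and then substituting into \eqref{eq:zeta} and \eqref{eq:coface}. If anything, your write-up is more explicit than the paper's, which simply records the resulting formulas with one-line justifications.
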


\begin{claim}[Infinitesimal deformations]
\label{rem:infinitesimal} We take $n=1$ in the previous subsection,
so $C:=D_{0}\subseteq D_{\leq1}=:\ti C$. Since $\lambda$, the canonical
projection of $\ti C$ onto $C$ is a coalgebra map, this extension
is trivial. Thus, any algebra $(A,m)$ in $\M_{C}$ admits a deformation.
Moreover, all of them are like those from Remark \ref{rem:triv_ext}.
In this context an $\iota$-deformation will be referred to as \emph{infinitesimal}.
The terminology is inspired by that used by Gerstenhaber. We will
return to the study of infinitesimal deformations when we discuss
the case where $D$ is graded connected. 
\end{claim}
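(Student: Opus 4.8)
The plan is to obtain the statement as a direct instance of the triviality mechanism of Remark \ref{rem:triv_ext}, reducing everything to the single point that the canonical projection $\lambda:\ti C=D^{0}\oplus D^{1}\to D^{0}=C$ is a morphism of coalgebras. Once this is known, the assertions that the extension is trivial, that every $(A,m)$ admits an $\iota$-deformation, and that all $\iota$-deformations have the shape described in Remark \ref{rem:triv_ext} follow without further work.

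First I would verify that $\lambda$ is a coalgebra morphism. This is immediate from the grading: $\Delta_{\ti C}(D^{0})\subseteq D^{0}\ot D^{0}$, whereas $\Delta_{\ti C}(D^{1})\subseteq D^{0}\ot D^{1}+D^{1}\ot D^{0}$, and $\lambda$ vanishes on $D^{1}$; hence $(\lambda\ot\lambda)\circ\Delta_{\ti C}$ and $\Delta_{C}\circ\lambda$ agree on each homogeneous component, while $\varepsilon_{C}\circ\lambda=\varepsilon_{\ti C}$ because $\varepsilon_{\ti C}$ is zero on $D^{1}$. Equivalently, this is the fact recalled in the Preliminaries that the projection of a graded coalgebra onto its degree-zero component is a coalgebra morphism, applied here to the subcoalgebra $D_{\leq1}$. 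One may also note directly that for $n=1$ the sum in \eqref{eq:omega_gr} is empty, so the $2$-cocycle attached to the extension is $\omega=0$ --- which is precisely the definition of triviality in Remark \ref{rem:triv_ext} --- and correspondingly $\zeta=0$ by \eqref{eq:zeta_gr}.

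Having this, I would invoke Remark \ref{rem:triv_ext}. Since $\lambda$ is a coalgebra morphism with $\lambda\circ\iota=\id_{C}$, the functor $\lambda^{*}:\cat A_{C}\to\cat A_{\ti C}$ sends $(A,m)$ to $(A,m\circ\lambda)$, and $\iota^{*}(A,m\circ\lambda)=(A,m\circ\lambda\circ\iota)=(A,m)$; hence $m\circ\lambda$ is an $\iota$-deformation of $m$, so $\Dm\cti\neq\emptyset$ (alternatively, $\zeta=0$ is a coboundary and Theorem \ref{thm:main_result_for_extensions}(b) applies). Moreover $(m\circ\lambda)\circ\iota_{X}=0$, because $\lambda$ kills $X=D^{1}$; taking $m\circ\lambda$ as the base point of the correspondence of Theorem \ref{thm:Def=Sol_MCE}, that theorem yields a bijection $\Dm\cti\xleftrightarrow{1-1}\mrm[Z][2]$ under which an $\iota$-deformation $\ti m$ corresponds to $\nu:=\ti m\circ\iota_{X}$, that is, $\ti m(c,x)=(m\circ\lambda)(c,x)+\nu(x)$ with $\nu\in\mrm[Z][2]$ --- exactly the form of Remark \ref{rem:triv_ext}. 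Passing to equivalence classes via Theorem \ref{thm:Equiv_Def}, the infinitesimal deformations of $(A,m)$ are classified, up to equivalence, by $\mrm[H][2]$.

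I do not expect a real obstacle: the claim is an instantiation of Remark \ref{rem:triv_ext} in the graded setting. The only points demanding (minor) care are the verification that $\lambda|_{D_{\leq1}}$ is a coalgebra morphism --- equivalently, that the sum defining $\omega$ in \eqref{eq:omega_gr} is empty when $n=1$ --- and the observation $(m\circ\lambda)\circ\iota_{X}=0$, which is what makes the classification of infinitesimal deformations collapse to the group $\mrm[Z][2]$ of $2$-cocycles.
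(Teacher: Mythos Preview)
Your proposal is correct and follows exactly the paper's approach: the claim carries no separate proof, and your argument simply unpacks the reasoning embedded in the statement (that $\lambda$ is a coalgebra map, hence the extension is trivial) before invoking Remark~\ref{rem:triv_ext} and Theorem~\ref{thm:Def=Sol_MCE} as intended. One minor quibble: in Remark~\ref{rem:triv_ext} triviality is \emph{defined} by the existence of a coalgebra retract $\lambda$, with $\omega=0$ then derived as a consequence, so your aside that $\omega=0$ ``is precisely the definition of triviality'' is slightly imprecise, though harmless.
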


\begin{claim}[Multiplications of rank one]
 At this level of generality, no further conclusions can be drawn
regarding $\mrm[H][*]$ and, implicitly, the set $\Dm\cti/_{\sim}$.
To gain additional insights, we will examine the more specialized
case where the rank of the multiplication $m$ is $1$. 

By definition, the \emph{rank} of $(A,m)$ is defined as being the
dimension of $m(D^{0})$. Thus, $m$ has rank 1 if and only if there
exists a map $\chi$ in the dual linear space of $D^{0}$ and an algebra
$(A,m^{0})$ in $\M$, such that $m(c)=\chi(c)m^{0}$ for all $c\in D^{0}$. 
We treat $\chi$ as an element in $C^{*}$, extending it by zero on
the components $D^{i}$, for any $0<i<n$. 

Recall that the maps $\partial_{i}^{n}$ are the coface morphism of
$\mathrm{C}^{*}(A,m^{0})$, see Remark \ref{rem:standard_complex}.
Given an algebra $(A,m)$ of rank 1, we can express the coface morphism
$d_{0}^{n}$ of $\mathrm{C}_{X}^{*}(A,m)$ as follows:
\[
\begin{alignedat}{1}d_{0}^{n}(\nu)(x)= & \sum m(x\r{1,0})\circ\big(A\ot\nu(x\r{2,n})\big)=m^{0}\circ\big[A\ot\nu\big(\sum\chi(x\r{1,0})x\r{2,n}\big)\big]\\
= & \partial{}_{0}^{n}\big(\nu(\chi\rightharpoonup x)\big)=\partial{}_{0}^{n}\big((\nu\leftharpoonup\chi)(x)\big)=\big(\partial{}_{0}^{n}\circ(\nu\leftharpoonup\chi)\big)(x).
\end{alignedat}
\]
Proceeding similarly, we show that $d_{i}^{n}=\partial{}_{i}^{n}\circ(\nu\leftharpoonup\chi)$,
to finally obtain that:
\begin{equation}
d_{X}^{n}(\nu)=\partial{}^{n}\circ(\nu\leftharpoonup\chi)=\cat V(X,\partial^{n})(\nu\leftharpoonup\chi).\label{eq: rank_1}
\end{equation}
More can be said if we further assume that $\chi=\varepsilon_{D^{0}}$.
Since $\nu\leftharpoonup\varepsilon_{D^{0}}=\nu$,  by the above equation,
we have $d_{X}^{*}=\cat V(X,\partial^{*})$. Thus, under the standing
assumptions, the complex $\mrm[C][*]$ is obtained from $\mathrm{C}^{*}(A,m^{0})$
by applying the exact functor $\cat V(X,-)$. Hence, $\mrm[H][*]$
is equal to $\cat V(X,\mathrm{H}^{*}(A,m^{0}))$. Then, according
to Theorem \ref{thm:main_result_for_extensions}, we have the following
result.
\end{claim}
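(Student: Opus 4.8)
The plan is to obtain the statement as a direct consequence of the identification of $\mrm[C][*]$ carried out just above, combined with the general classification in Theorem~\ref{thm:main_result_for_extensions}.

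First I would record precisely the shape of the relevant complex. Under the standing hypothesis $\chi=\varepsilon_{D^{0}}$ one has $\nu\leftharpoonup\varepsilon_{D^{0}}=\nu$, so the formula for $d_{X}^{n}$ obtained above reads $d_{X}^{n}=\cat V(X,\partial^{n})$ for all $n$; that is, $\mrm[C][*]$ is the complex obtained from the Hochschild complex $\mathrm{C}^{*}(A,m^{0})$ of $(A,m^{0})$ (see Remark~\ref{rem:standard_complex}) by applying the functor $\cat V(X,-)=\Hom_{\Bbbk}(X,-)$. Since $\Bbbk$ is a field, every short exact sequence of $\Bbbk$-vector spaces splits, so $\cat V(X,-)$ is an exact functor, and exact functors commute with the formation of (co)homology. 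Hence for every $n$ there is a natural isomorphism
\[
\mrm[H][n]\;\cong\;\cat V\bigl(X,\mathrm{H}^{n}(A,m^{0})\bigr).
\]
Concretely, $\Ker d_{X}^{n}=\cat V(X,\Ker\partial^{n})$ and $\Im d_{X}^{n-1}=\cat V(X,\Im\partial^{n-1})$, and applying $\cat V(X,-)$ to the short exact sequence $0\to\Im\partial^{n-1}\to\Ker\partial^{n}\to\mathrm{H}^{n}(A,m^{0})\to 0$ yields the isomorphism; equivalently, $\nu$ is a cocycle (resp.\ a coboundary) in $\mrm[C][*]$ if and only if $\nu(x)$ is a Hochschild cocycle (resp.\ its class is a Hochschild coboundary) for every $x\in X$.

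Next I would transport the three assertions of Theorem~\ref{thm:main_result_for_extensions} through this isomorphism. Part~(a) says that the obstruction cochain $\zeta$ of \eqref{eq:zeta_gr} lies in $\mrm[Z][3]$, hence it determines a well-defined class $[\zeta]\in\cat V\bigl(X,\mathrm{H}^{3}(A,m^{0})\bigr)$. Part~(b) then becomes: $\Dm\cti\neq\emptyset$, i.e.\ $(A,m)$ admits an $\iota$-deformation, if and only if $[\zeta]=0$. Part~(c) becomes: whenever an $\iota$-deformation $\widetilde m$ exists, the assignment $\widetilde m'\mapsto\widetilde m'_{X}-\widetilde m_{X}$ of Theorem~\ref{thm:Def=Sol_MCE} descends to a bijection
\[
\Dm\cti/_{\sim}\;\xleftrightarrow{1-1}\;\mrm[H][2]\;\cong\;\cat V\bigl(X,\mathrm{H}^{2}(A,m^{0})\bigr).
\]
Two consequences are worth making explicit. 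If $m$ vanishes on $D^{1},\dots,D^{n-1}$ — automatically the case when $n=1$, i.e.\ for the infinitesimal deformations of \ref{rem:infinitesimal} — then $m(\omega_{1}(x))=0$ for all $x$, so $\zeta=0$ and $\iota$-deformations always exist; and if $\mathrm{H}^{3}(A,m^{0})=0$ (resp.\ $\mathrm{H}^{2}(A,m^{0})=0$), then $(A,m)$ always admits an $\iota$-deformation (resp.\ any two $\iota$-deformations of $(A,m)$ are equivalent, so $(A,m)$ is \emph{rigid}).

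Once the identification $\mrm[C][*]=\cat V(X,\mathrm{C}^{*}(A,m^{0}))$ is in place, all three assertions follow formally, so I do not expect a genuine obstacle. The main — indeed essentially the only — point deserving verification is that the bijection of part~(c) is compatible with the cohomological identification, i.e.\ that $\widetilde m'\mapsto\widetilde m'_{X}-\widetilde m_{X}$, after passing to equivalence classes, corresponds under the isomorphism above to the canonical identification $\mrm[Z][2]/\mrm[B][2]=\mrm[H][2]\xrightarrow{\sim}\cat V\bigl(X,\mathrm{H}^{2}(A,m^{0})\bigr)$. This is immediate from the naturality of $\cat V(X,-)$ and from the fact that the differentials $d_{X}^{1}=d^{1}$ and $d_{X}^{2}=d^{2}$ of Subsections~\ref{claim:.Def_coalg_ext} and \ref{claim:equiv_def_extensions} now coincide with $\cat V(X,\partial^{1})$ and $\cat V(X,\partial^{2})$.
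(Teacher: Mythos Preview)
Your proposal is correct and follows essentially the same approach as the paper: identify $\mrm[C][*]$ with $\cat V(X,\mathrm{C}^{*}(A,m^{0}))$ via $d_{X}^{*}=\cat V(X,\partial^{*})$, use exactness of $\cat V(X,-)$ to get $\mrm[H][*]\cong\cat V(X,\mathrm{H}^{*}(A,m^{0}))$, and then invoke Theorem~\ref{thm:main_result_for_extensions}. Your added remarks on the vanishing of $\zeta$ when $m$ is trivial on $D^{1},\dots,D^{n-1}$ and on rigidity are correct and natural, though the paper records only the bijection $\Dm\cti/_{\sim}\leftrightarrow\cat V(X,\mathrm{H}^{2}(A,m^{0}))$ as the formal statement (Theorem~\ref{thm:rank_1}).
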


\begin{thm}
\label{thm:rank_1}Let $\iota$ denote the inclusion map $C:=D_{<n}\subseteq D_{\leq n}=:\ti C$,
where $D=\oplus_{n\geq0}D^{n}$ is a graded coalgebra. Let $(A,m)$
be an algebra in $\M_{C}$ of rank 1, for which $\chi=\varepsilon_{D^{0}}$.
If the obstruction to deformation from Equation \eqref{eq:zeta_gr}
is a coboundary, then there is a one-to-one correspondence between
$\Dm\cti/_{\sim}$ and $\cat V(X,\mathrm{H}^{2}(A,m^{0}))$.
\end{thm}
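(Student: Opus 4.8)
The strategy is to feed the cohomology computation carried out just before the statement into Theorem~\ref{thm:main_result_for_extensions}. The first step is purely formal. Since we assume $\chi=\varepsilon_{D^{0}}$, we have $\nu\leftharpoonup\varepsilon_{D^{0}}=\nu$, so Equation~\eqref{eq: rank_1} specializes to $d_{X}^{n}=\cat V(X,\partial^{n})$ in every degree $n$; equivalently, the whole cochain complex $\mrm[C][*]$ is the image of the Hochschild complex $\mathrm{C}^{*}(A,m^{0})$ of the algebra $(A,m^{0})$ in $\M$ under the functor $\cat V(X,-)$. The only point that genuinely needs checking here --- and, in my view, the only possible pitfall in the whole argument --- is that \emph{every} coface map $d_{i}^{n}$, not just $d_{0}^{n}$ which is treated in the text, becomes $\partial_{i}^{n}$ precomposed with $\nu\mapsto\nu\leftharpoonup\chi$; this is the ``proceeding similarly'' step, and it is routine given the explicit formula~\eqref{eq:rho_gr} for $\rho_{r}$ together with the graded form of the $d_{i}^{n}$ recorded in Subsection~\ref{claim:graded_coalg}.

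Next I would use exactness: over the field $\Bbbk$ the functor $\cat V(X,-)$ is exact, hence commutes with the passage to cohomology of a cochain complex of $\Bbbk$-vector spaces. Applying this to $\mathrm{C}^{*}(A,m^{0})$ yields a natural isomorphism $\mrm[H][n]\simeq\cat V\big(X,\mathrm{H}^{n}(A,m^{0})\big)$ for all $n$, and in particular $\mrm[H][2]\simeq\cat V\big(X,\mathrm{H}^{2}(A,m^{0})\big)$.

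Finally I would invoke Theorem~\ref{thm:main_result_for_extensions}. Its part~(a) gives $\zeta\in\mrm[Z][3]$, and the hypothesis that $\zeta$ (as computed in~\eqref{eq:zeta_gr}) is a coboundary says exactly that its class in $\mrm[H][3]$ vanishes; hence part~(b) yields $\Dm\cti\neq\emptyset$, and part~(c) yields a one-to-one correspondence $\Dm\cti/_{\sim}\xleftrightarrow{1-1}\mrm[H][2]$. Composing this with the isomorphism of the previous paragraph produces the asserted bijection $\Dm\cti/_{\sim}\xleftrightarrow{1-1}\cat V\big(X,\mathrm{H}^{2}(A,m^{0})\big)$, which finishes the proof. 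Since all the substantial work was already done in Subsection~\ref{claim:C*} and in the analysis of rank-one multiplications, nothing beyond bookkeeping remains; I would simply take care to spell out one or two more of the $d_{i}^{n}$ beyond $d_{0}^{n}$ before citing Theorem~\ref{thm:main_result_for_extensions}.
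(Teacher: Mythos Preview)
Your proposal is correct and follows essentially the same approach as the paper: specialize Equation~\eqref{eq: rank_1} via $\chi=\varepsilon_{D^{0}}$ to identify $\mrm[C][*]$ with $\cat V(X,\mathrm{C}^{*}(A,m^{0}))$, use exactness of $\cat V(X,-)$ to compute $\mrm[H][2]$, and then invoke Theorem~\ref{thm:main_result_for_extensions}. The paper's argument is more terse but structurally identical.
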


\begin{claim}[The comodule $D^{n}$ is completely reducible]
\label{claim:completly_reducible} Infinitesimal deformations can
be seen as a first approximation of a deformation. The case we are
currently studying allows us, at least theoretically, to continue
the deformation process to a higher degree. In practice, the situation
is more complicated because, in the first place, the extension $C\subseteq\ti C$
is not necessarily trivial, so the obstruction to deformation may
be non-trivial. However, we can provide information about the structure
of the cohomology $\mrm[H][*]$, at least for those extensions of
coalgebras where the comodule $X$ is completely reducible.

By definition, $X$ is called \emph{completely reducible} if and only
if $X=\oplus_{i\in I}X_{i}$, where each $X_{i}$ is a one-dimensional
subcomodule over $C$. We fix such a decomposition of $X$. By Equation
\eqref{eq:rho_gr}, the image of $\rho_{r}$ is contained in $D^{n}\ot D^{0}$,
so there are $\{x_{i}\}_{i\in I}\subseteq X$ and $\{g_{i}\}_{i\in I}\subseteq G(D^{0})$
such that $\rho_{r}(x_{i})=x_{i}\ot g_{i}$, for all $i\in I$. 

If $(A,m)$ is an algebra in $\M_{C}$, then we let $m_{i}:=m(g_{i})$,
for all $i\in I$. Of course, every $m_{i}\in\mbb A2$ is associative.
Taking into account Lemma \ref{lem:prop_C^*_X}, we get the identifications
\[
\mrm[C][*]\simeq\prod_{i\in I}\mathrm{C}_{X_{i}}^{*}(A,m)\simeq\prod_{i\in I}\mathrm{C}^{*}(A,m_{i}).
\]
As we already know, the first isomorphism maps a cochain $\nu\in\mrm[C][n]$
to $\{\nu\res{X_{i}}{}\}_{i\in I}$. On the other hand, the second
isomorphism is given by the mapping $\{\nu_{i}\}_{i\in I}\mapsto\{\nu_{i}(x_{i})\}_{i\in I}$.
Since cohomology and direct products commute, we deduce that 
\[
\mrm[H][*]\simeq\prod_{i\in I}\mathrm{H}^{*}(A,m_{i}).
\]
Summarizing, we have the following theorem.
\end{claim}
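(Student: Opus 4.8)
The strategy is to decompose the controlling complex $\mrm[C][*]$ into a product of ordinary Hochschild complexes, one for each simple summand of $X$, and then to read the deformation statements off Theorem \ref{thm:main_result_for_extensions}. First I would apply Lemma \ref{lem:prop_C^*_X}(b) to the fixed decomposition $X=\bigoplus_{i\in I}X_{i}$: it yields an isomorphism of cochain complexes $\mrm[C][*]\simeq\prod_{i\in I}\mathrm{C}_{X_{i}}^{*}(A,m)$ sending a cochain $\nu$ to the family of its restrictions $\{\nu\restr{X_{i}}\}_{i\in I}$, the compatibility with the differentials being exactly the relation $d_{k}^{n}(\nu)\restr{X_{i}}=d_{k}^{n}(\nu\restr{X_{i}})$ recorded in the proof of that lemma.

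Next, fix $i\in I$. Since $X_{i}=\Bbbk x_{i}$ is one-dimensional with $\rho_{r}(x_{i})=x_{i}\ot g_{i}$ and $g_{i}\in G(D^{0})$, evaluation at $x_{i}$ is a linear isomorphism $\mathrm{C}_{X_{i}}^{n}(A,m)=\cat V(X_{i},\mbb An)\to\mbb An$, $\nu_{i}\mapsto\nu_{i}(x_{i})$. I would then substitute $(x_{i})\u 0=x_{i}$ and $(x_{i})\u 1=g_{i}$ into the coface formulas \eqref{eq:coface}: because $g_{i}$ is group-like and $m(g_{i})=m_{i}$, each coface $d_{k}^{n}$ of $\mathrm{C}_{X_{i}}^{*}(A,m)$ collapses, term by term, to the Hochschild coface $\partial_{k}^{n}$ of the algebra $(A,m_{i})$ --- this is Remark \ref{rem:standard_complex} applied to the one-dimensional subcoalgebra $\Bbbk g_{i}\cong\Bbbk$ of $C$ and the trivial comodule $X_{i}$ over it. Hence $\mathrm{C}_{X_{i}}^{*}(A,m)\cong\mathrm{C}^{*}(A,m_{i})$, and composing with the isomorphism of the previous paragraph gives an isomorphism of complexes $\mrm[C][*]\simeq\prod_{i\in I}\mathrm{C}^{*}(A,m_{i})$ carrying $\nu$ to $\{\nu\restr{X_{i}}(x_{i})\}_{i\in I}$. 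Since arbitrary products of $\Bbbk$-vector spaces are exact, taking cohomology commutes with $\prod_{i\in I}$, so $\mrm[H][*]\simeq\prod_{i\in I}\mathrm{H}^{*}(A,m_{i})$, the Hochschild cohomologies of the algebras $(A,m_{i})$.

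It remains to transport the deformation-theoretic content. By Theorem \ref{thm:main_result_for_extensions} the obstruction $\zeta$ lies in $\mrm[Z][3]$; reading it through the product decomposition, its class corresponds to $\{[\zeta\restr{X_{i}}(x_{i})]\}_{i\in I}\in\prod_{i\in I}\mathrm{H}^{3}(A,m_{i})$, so this class vanishes --- equivalently $\Dm\cti\neq\emptyset$ --- if and only if each $\zeta\restr{X_{i}}(x_{i})$ is a Hochschild coboundary for $(A,m_{i})$. When $\iota$-deformations of $m$ exist, the bijection of Theorem \ref{thm:main_result_for_extensions}(c) identifies $\Dm\cti/_{\sim}$ with $\mrm[H][2]$, hence with $\prod_{i\in I}\mathrm{H}^{2}(A,m_{i})$. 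The only point that needs care is the term-by-term matching of the comodule coface operators $d_{k}^{n}$ on $\mathrm{C}_{X_{i}}^{*}(A,m)$ with the classical Hochschild cofaces of $(A,m_{i})$; but once $\rho_{r}(x_{i})=x_{i}\ot g_{i}$ is plugged in and one uses that $g_{i}$ is group-like, this is a routine verification rather than a genuine obstacle.
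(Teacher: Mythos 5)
Your proposal is correct and follows essentially the same route as the paper: decompose the complex via Lemma \ref{lem:prop_C^*_X}(b), identify each factor $\mathrm{C}_{X_{i}}^{*}(A,m)$ with the standard Hochschild complex $\mathrm{C}^{*}(A,m_{i})$ by evaluating at $x_{i}$ and using that $g_{i}$ is group-like, and then pass to cohomology using exactness of products of vector spaces. Your explicit term-by-term verification that the coface operators collapse to the Hochschild cofaces $\partial_{k}^{n}$ is a detail the paper leaves implicit, but it is the same argument.
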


\begin{thm}
\label{thm:completely_reducible}Let $\iota$ denote the inclusion
map $C:=D_{<n}\subseteq D_{\leq n}=:\ti C$. Let $D$ be a graded
coalgebra such that $D^{n}$ is completely reducible. If $(A,m)$
is an algebra in $\M_{C}$ such that the corresponding obstruction
to deformation $\zeta$ is a coboundary, then there is a one-to-one
correspondence between $\Dm\cti/_{\sim}$ and $\prod_{i\in I}\mathrm{H}^{2}(A,m_{i})$.
\end{thm}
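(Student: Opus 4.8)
The plan is to assemble the statement from the complex decomposition obtained in Subsection~\ref{claim:completly_reducible}, the exactness of direct products in $\cat V$, and part~(c) of Theorem~\ref{thm:main_result_for_extensions}.

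First I would record why each one-dimensional subcomodule $X_i$ produces a group-like element $g_i\in G(D^0)$. Fixing a spanning vector $x_i$ of $X_i$, the coaction must have the form $\rho_r(x_i)=x_i\ot g_i$ for some $g_i\in C$; coassociativity then forces $\Delta(g_i)=g_i\ot g_i$ and the counit axiom forces $\varepsilon(g_i)=1$, while Equation~\eqref{eq:rho_gr} places $g_i$ in $D^0$. Consequently $m_i:=m(g_i)$ is an associative multiplication on $A$ in $\M$: evaluating the associativity identity $m*(m\ot A)=m*(A\ot m)$ at the group-like element $g_i$ gives $m_i\circ(m_i\ot A)=m_i\circ(A\ot m_i)$.

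Next I would promote the vector-space identifications of Subsection~\ref{claim:completly_reducible} to an isomorphism of cochain complexes. By Lemma~\ref{lem:prop_C^*_X}(b) we already have $\mrm[C][*]\simeq\prod_{i\in I}\mathrm{C}_{X_i}^*(A,m)$ as complexes. For each $i$, evaluation at $x_i$ identifies $\mathrm{C}_{X_i}^n(A,m)=\cat V(X_i,\mbb An)$ with $\mbb An$, and I would check that under this identification the twisted coface maps $d_i^n$ of $\mathrm{C}_{X_i}^*(A,m)$ become the Hochschild coface maps $\partial_i^n$ of $\mathrm{C}^*(A,m_i)$; this follows at once from $\rho_r(x_i)=x_i\ot g_i$, Formula~\eqref{eq:coface}, and the description of $\partial_i^n$ in Remark~\ref{rem:standard_complex}. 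Hence $\mrm[C][*]\simeq\prod_{i\in I}\mathrm{C}^*(A,m_i)$ as cochain complexes.

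Finally, since direct products are exact in $\cat V$, passing to cohomology commutes with $\prod_{i\in I}$, so $\mrm[H][2]\simeq\prod_{i\in I}\mathrm{H}^2(A,m_i)$. The hypothesis that the obstruction $\zeta$ is a coboundary makes Theorem~\ref{thm:main_result_for_extensions}(c) applicable, yielding a bijection between $\Dm\cti/_{\sim}$ and $\mrm[H][2]$; composing with the previous isomorphism gives the asserted one-to-one correspondence. The only step demanding real care is verifying that the evaluation map is a chain map, i.e.\ that it intertwines the $C$-comodule-twisted coface operators with the plain Hochschild ones; the rest is formal bookkeeping.
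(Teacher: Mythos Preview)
Your proposal is correct and follows essentially the same route as the paper: the argument in Subsection~\ref{claim:completly_reducible} likewise extracts group-likes $g_i$ from the one-dimensional subcomodules, invokes Lemma~\ref{lem:prop_C^*_X}(b) together with the evaluation isomorphism $\mathrm{C}_{X_i}^*(A,m)\simeq\mathrm{C}^*(A,m_i)$, commutes cohomology with the product, and then applies Theorem~\ref{thm:main_result_for_extensions}(c). You are slightly more explicit than the paper in verifying that evaluation at $x_i$ intertwines the coface maps, but this is the only point of difference and it is one of emphasis, not of strategy.
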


Examples of graded coalgebras that satisfy the conditions of the above
theorem include pointed coalgebras, for which the degree-zero component
is cosemisimple. If $D$ is such a coalgebra, then any $D^{0}$-comodule
can be written as a direct sum of simple subcomodules. On the other
hand, $D^{0}$ being pointed, all simple $D^{0}$-comodule are of
dimension $1$. Thus, any $D^{0}$-comodule is completely reducible.
Using the notation from Subsection \ref{claim:completly_reducible}
and previous theorem, we are now in a position to state the corollary
below. 
\begin{cor}
\label{cor:D=00003Dpointed_cosemi}We assume that $D$ is a pointed
graded coalgebra, such that $D^{0}$ is cosemisimple. If the corresponding
obstruction to deformation $\zeta$ is a coboundary, then there is
a one-to-one correspondence between $\Dm\cti/_{\sim}$ and $\prod_{i\in I}\mathrm{H}^{2}(A,m_{i})$.
\end{cor}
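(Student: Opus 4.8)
The plan is to obtain this statement as an immediate consequence of Theorem \ref{thm:completely_reducible}; the only point that needs verification is that, under the present hypotheses, the comodule $X=D^n$ is completely reducible in the sense of Subsection \ref{claim:completly_reducible}. First I would record the following observation about the two comodule structures in play. By Equation \eqref{eq:rho_gr} the right $C$-coaction $\rho_r$ on $X$ has image contained in $D^n\ot D^0$, so it corestricts to a coaction $\rho_r':X\to X\ot D^0$ turning $X$ into a right $D^0$-comodule, and the original $C$-coaction is recovered by composing $\rho_r'$ with $X\ot\iota_{D^0}$, where $\iota_{D^0}:D^0\hookrightarrow C$ is the canonical inclusion. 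For a subspace $W\subseteq X$ one then has $\rho_r(W)\subseteq X\ot D^0$ automatically, whence $W$ is a $C$-subcomodule if and only if $\rho_r'(W)\subseteq W\ot D^0$, i.e. if and only if $W$ is a $D^0$-subcomodule. In particular, $C$-subcomodules and $D^0$-subcomodules of $X$ coincide.

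Next I would invoke the two assumptions on $D^0$. Being a subcoalgebra of the pointed coalgebra $D$, the component $D^0$ is itself pointed; combined with the hypothesis that $D^0$ is cosemisimple, this forces $D^0$ to equal its own coradical and that coradical to be the direct sum of the one-dimensional subcoalgebras $\Bbbk g$, $g\in G(D^0)$. Consequently every simple right $D^0$-comodule is one-dimensional, and since $D^0$ is cosemisimple the comodule $X$ decomposes as $X=\bigoplus_{i\in I}X_i$ with each $X_i$ a one-dimensional $D^0$-subcomodule. By the previous paragraph each $X_i$ is then a one-dimensional $C$-subcomodule, so $X$ is completely reducible.

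Finally, with the completely reducible hypothesis established, Theorem \ref{thm:completely_reducible} applies verbatim to the extension $\iota:C=D_{<n}\subseteq D_{\leq n}=\ti C$: picking $x_i\in X_i$ and $g_i\in G(D^0)$ with $\rho_r(x_i)=x_i\ot g_i$ and setting $m_i:=m(g_i)$, the assumption that the obstruction $\zeta$ is a coboundary yields the desired bijection between $\Dm\cti/_{\sim}$ and $\prod_{i\in I}\mathrm{H}^2(A,m_i)$. I do not expect a real obstacle here: the corollary is essentially the remark, already sketched in the paragraph preceding its statement, that a pointed cosemisimple $D^0$ makes every $D^0$-comodule (hence $X$) a direct sum of one-dimensional subcomodules; the only mild point worth making explicit is the identification of one-dimensional $D^0$-subcomodules with one-dimensional $C$-subcomodules carried out above.
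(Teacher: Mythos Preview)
Your proposal is correct and follows essentially the same route as the paper: the paragraph preceding the corollary already argues that, for a pointed graded coalgebra with cosemisimple $D^{0}$, every $D^{0}$-comodule decomposes as a direct sum of one-dimensional subcomodules, and then invokes Theorem \ref{thm:completely_reducible}. Your write-up adds a bit more detail (the explicit identification of $C$-subcomodules with $D^{0}$-subcomodules of $X$, and the observation that $D^{0}$ inherits pointedness from $D$), but the strategy and the key ingredients are the same.
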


Let $D:=\Bbbk[t_{1},\dots,t_{r}]$ be the vector space of polynomials
in variables $t_{1},\dots,t_{r}$ with coefficients in $\Bbbk$.  If
$P:=(p_{1},\dots,p_{r})\in\N^{r}$, then $t^{P}:=t_{1}^{p_{1}}\cdots t_{r}^{p_{r}}$. 
The degree of $t^{P}$ will be denoted by $\left|P\right|$. We endow
$D$ with a cocommutative coalgebra structure by letting: 
\[
\Delta(t^{P}):=\sum_{Q+R=P}t^{Q}\ot t^{R}.
\]
Let $D^{n}$ denote the subspace spanned by the set of all monomials
of degree $n$. By definition, $\varepsilon_{D}$ is zero on $D^{n}$,
for all $n>0$, and coincides with the identity of $\Bbbk$ on $D^{0}$. 
The subspaces $\{D^{n}\}_{n\in\N}$ provide a grading of $D$. Note
that the dimension of $D^{n}$ is equal to $d_{n}:=\binom{n+r-1}{n}$.

An immediate application of Corollary \ref{cor:D=00003Dpointed_cosemi},
for $D:=\Bbbk[t_{1},\dots,t_{r}]$, yields us the following corollary.
\begin{cor}
\label{cor:poly_r}Let $(A,m)$ be an algebra in $\M_{C}$. If the
corresponding obstruction to deformation $\zeta$ is a coboundary,
then there is a one-to-one correspondence between $\Dm\cti/_{\sim}$
and $\mathrm{H}^{2}(A,m(1))^{d_{n}}$. 
\end{cor}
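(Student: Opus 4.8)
The plan is to derive the statement directly from Corollary \ref{cor:D=00003Dpointed_cosemi}; the only genuine work is to make the index set $I$ and the algebras $m_i$ appearing there explicit in the case $D=\Bbbk[t_1,\dots,t_r]$.

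First I would verify that $D=\Bbbk[t_1,\dots,t_r]$ meets the hypotheses of Corollary \ref{cor:D=00003Dpointed_cosemi}. The subspaces $\{D^n\}_{n\in\N}$ spanned by the monomials of a fixed degree form a grading of the cocommutative coalgebra $D$ (noted just before the statement), and $D^0=\Bbbk\cdot 1$ is one-dimensional, so $D$ is connected — in particular pointed — and $D^0$ is (trivially) cosemisimple. Hence Corollary \ref{cor:D=00003Dpointed_cosemi} applies to the extension $\iota\colon C=D_{<n}\hookrightarrow D_{\leq n}=\ti C$: provided the obstruction $\zeta$ of \eqref{eq:zeta_gr} is a coboundary, there is a one-to-one correspondence between $\Dm\cti/_{\sim}$ and $\prod_{i\in I}\mathrm{H}^2(A,m_i)$, with $I$, $\{x_i\}$, $\{g_i\}$, $\{m_i\}$ as in Subsection \ref{claim:completly_reducible}.

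Next I would unwind those data. Here $X=D^n$, and by \eqref{eq:rho_gr} the right $C$-coaction is $\rho_r(x)=\sum x\r{1,n}\ot x\r{2,0}$. Evaluating on a monomial $t^P$ of degree $n$, we have $\Delta(t^P)=\sum_{Q+R=P}t^Q\ot t^R$, and the only summand whose left factor lies in $D^n$ and whose right factor lies in $D^0$ is $t^P\ot 1$; thus $\rho_r(t^P)=t^P\ot 1$. Consequently the monomial basis furnishes a decomposition $X=D^n=\bigoplus_{|P|=n}\Bbbk\,t^P$ into one-dimensional $C$-subcomodules, so we may take $I$ to be the set of degree-$n$ monomials, with $x_i=t^P$ and $g_i=1\in G(D^0)$ for every $i$ (recall $G(D^0)=\{1\}$ since $D^0$ is one-dimensional).

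Finally, since $m_i:=m(g_i)=m(1)$ for all $i\in I$, the product $\prod_{i\in I}\mathrm{H}^2(A,m_i)$ collapses to $\mathrm{H}^2(A,m(1))^{|I|}$, and $|I|$ is the number of degree-$n$ monomials in $r$ variables, namely $d_n=\binom{n+r-1}{n}$. This gives the asserted correspondence. I do not expect a real obstacle: the only points needing care are confirming that the canonical monomial decomposition of $D^n$ is a decomposition into subcomodules (which is exactly the computation $\rho_r(t^P)=t^P\ot 1$ above) and recalling that all group-like elements of $D^0$ coincide, so that every $m_i$ equals $m(1)$.
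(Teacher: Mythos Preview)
Your proof is correct and follows exactly the approach the paper indicates: the paper simply states that the result is ``an immediate application of Corollary \ref{cor:D=00003Dpointed_cosemi}'' for $D=\Bbbk[t_1,\dots,t_r]$, and you have spelled out precisely the details needed to make that application go through. The identification of the monomial basis of $D^n$ as a comodule decomposition with all $g_i=1$, and hence all $m_i=m(1)$, is exactly what is implicit in the paper's one-line justification.
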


\begin{rem}
The classification of deformations of an algebra $(A,m)$ can be derived
directly from the preceding corollary by setting $r=1$. By considering
Remark \ref{rem:standard_complex}, it follows that the cohomology
$\mathrm{H^{*}}(A,m(1))$ corresponds exactly to the Hochschild cohomology
of the algebra $(A,m(1))$ within $\M$,  as defined in \cite{AMS}.
By specializing the category $\M$, various specific deformation theories
emerge, including those of $H$-actions and coactions, cochain algebras
($DG$-algebras), diagrams of associative algebras, and others.
\end{rem}

In the last part of the article we address the problem of the existence
of a unit in the context of deformations of unital algebras. More
precisely, let $D=\oplus_{n\in\N}D^{n}$ be a graded coalgebra. Let
$\lambda:\ti C\to C$ and $\iota:C\to\ti C$ denote the canonical
maps, where $C:=D^{0}$ and $\ti C:=D$. Our aim is to prove the following
result.
\begin{thm}
\label{thm:unit} If $(A,m,u)$ is a unital algebra in $\M_{C}$ and
$(A,\ti m)$ is an $\iota$-deformation of $(A,m)$, then there exists
an invertible morphism $f\in\M_{\ti C}(A,A)$ such that $u\circ\lambda$
is a unit of $\ti m_{f}:=f^{-1}\ast\ti m\ast(f\ot f)$. Furthermore,
$f\ast(u\circ\lambda)$ is a unit of $\ti m$. 
\end{thm}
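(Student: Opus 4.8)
The plan is to pass to the associative (a priori non-unital) $\Bbbk$-algebra $\mathcal{A}:=\M_{\ti C}(\I,A)=\cat V(\ti C,\mbb A0)$, whose multiplication is the convolution-type product $\phi\bullet\psi:=\ti m\ast(\phi\ot\psi)$ (throughout, the coherence isomorphisms $\I\ot\I\simeq\I\simeq\I\ot A$ are suppressed, as in \ref{cl:CohThm}). Its restriction to the bottom term $D^{0}=D_{\leq 0}$ of the coalgebra filtration $\{D_{\leq n}\}_{n\in\N}$ of $\ti C=D$ is the analogous algebra attached to $(A,m)$, which is unital with unit $u$. The whole argument is organised around this filtration and the congruences $\equiv_{n}$ of Section~\ref{sec:TM_C}, together with their compatibility with $\ast$ and $\ot$ recalled there.

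First I would show that $\ti m$ is unital. For this, build an idempotent $\hat e\in\mathcal{A}$, i.e.\ $\ti m\ast(\hat e\ot\hat e)=\hat e$, with $\hat e\equiv_{0}u\circ\lambda$, via the iteration $e_{0}:=u\circ\lambda$ and $e_{k+1}:=2e_{k}-\ti m\ast(e_{k}\ot e_{k})$. Writing $\delta_{k}:=\ti m\ast(e_{k}\ot e_{k})-e_{k}$, one has $\delta_{0}\equiv_{0}0$ (on $D^{0}$ this reads $m\ast(u\ot u)-u=0$, as $u$ is the unit of $(\mbb A0,m)$), and by induction $\delta_{k}\equiv_{k}0$ forces $\delta_{k+1}\equiv_{k+1}0$: from $\delta_{k+1}=2\delta_{k}-\ti m\ast(e_{k}\ot\delta_{k})-\ti m\ast(\delta_{k}\ot e_{k})+\ti m\ast(\delta_{k}\ot\delta_{k})$, using that $e_{k}$ and $\ti m$ coincide with $u$ and $m$ on $D^{0}=D_{\leq 0}$, the unit axioms for $u$, and the estimates for $\equiv_{n}$ under $\ast$ and $\ot$, one gets $\ti m\ast(e_{k}\ot\delta_{k})\equiv_{k+1}\delta_{k}$, $\ti m\ast(\delta_{k}\ot e_{k})\equiv_{k+1}\delta_{k}$ and $\ti m\ast(\delta_{k}\ot\delta_{k})\equiv_{k+1}0$, whence $\delta_{k+1}\equiv_{k+1}0$. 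Since $e_{k+1}-e_{k}=-\delta_{k}$ vanishes on $D_{\leq k}$ and the filtration is exhaustive, the $e_{k}$ stabilise on each $D^{N}$ and define $\hat e\in\mathcal{A}$; then $\ti m\ast(\hat e\ot\hat e)-\hat e$ vanishes on every $D_{\leq N}$, so $\hat e$ is idempotent, and $\hat e\equiv_{0}u\circ\lambda$. Now set $L:=\ti m\ast(\hat e\ot A)\in\M_{\ti C}(A,A)$. By associativity of $\ti m$ and idempotency of $\hat e$ one checks $L\ast L=L$, and $L\equiv_{0}\id_{A}$ since on $D^{0}$ it is $m\ast(u\ot A)$, the left unit constraint; by Takeuchi Lemma (\ref{fa:iTakeuchi Lemma}) $L$ is invertible in the algebra $\M_{\ti C}(A,A)$, and an invertible idempotent is the identity, so $\ti m\ast(\hat e\ot A)=\eps_{\ti C}^{*}(l_{A})$. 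Symmetrically $\ti m\ast(A\ot\hat e)=\eps_{\ti C}^{*}(r_{A})$. Hence $\hat e$ is a unit for $\ti m$ and $\mathcal{A}$ is unital with unit $\hat e$; in particular the last assertion of the theorem will follow once we produce an invertible $f$ with $f\ast(u\circ\lambda)=\hat e$.

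Next I would produce $f$ by inverting $u\circ\lambda$ in $\mathcal{A}$. Since $\mathcal{A}$ is now unital, and for consecutive terms of the filtration one has $\Delta(D_{\leq n+1})\subseteq D_{\leq n}\ot D_{\leq n+1}+D_{\leq n+1}\ot D_{\leq n}$, the argument proving Takeuchi Lemma applies verbatim to $\mathcal{A}$: as $u\circ\lambda$ restricts on $D^{0}$ to the invertible element $u$ of $\M_{C}(\I,A)$, it is invertible in $\mathcal{A}$; let $\omega\in\mathcal{A}$ be its inverse, which restricts to $u$ on $D^{0}$ (a unit is self-inverse). Put $f:=\ti m\ast(\omega\ot A)\in\M_{\ti C}(A,A)$. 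Then $f\equiv_{0}\id_{A}$, so $f$ is invertible in $\M_{\ti C}$ by Takeuchi Lemma, and $f\ast(u\circ\lambda)=\ti m\ast(\omega\ot(u\circ\lambda))=\omega\bullet(u\circ\lambda)=\hat e$. Finally, for $\ti m_{f}=f^{-1}\ast\ti m\ast(f\ot f)$ we compute, using $(f\ot f)\ast((u\circ\lambda)\ot A)=(f\ast(u\circ\lambda))\ot f=\hat e\ot f$ and the left-unit property of $\hat e$, that $\ti m_{f}\ast((u\circ\lambda)\ot A)=f^{-1}\ast\ti m\ast(\hat e\ot A)\ast(\I\ot f)=f^{-1}\ast\eps_{\ti C}^{*}(l_{A})\ast(\I\ot f)=f^{-1}\ast f\ast\eps_{\ti C}^{*}(l_{A})=\eps_{\ti C}^{*}(l_{A})$ by naturality of $l$, and symmetrically $\ti m_{f}\ast(A\ot(u\circ\lambda))=\eps_{\ti C}^{*}(r_{A})$. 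Thus $u\circ\lambda$ is a unit of $\ti m_{f}$, and $f\ast(u\circ\lambda)=\hat e$ is a unit of $\ti m$ by the first step.

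The main obstacle is the first step: setting up the iteration and verifying carefully that each congruence order really improves by exactly one. This rests on the interplay between the grading of $D$ (so that $D^{0}=D_{\leq 0}$), the fact that $\iota^{*}(\ti m)=m$ only forces agreement in degree zero, and the unit axioms for $(A,m,u)$, combined with the behaviour of $\equiv_{n}$ under composition and tensor product. Once the idempotent $\hat e$ is in hand, the remaining steps — inverting $u\circ\lambda$ in $\mathcal{A}$ and checking the unit constraints of $\ti m_{f}$ — are formal.
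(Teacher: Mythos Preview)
Your proof is correct and takes a genuinely different route from the paper's.

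The paper proceeds by directly building $f$ as a limit of a sequence $\{f_n\}$ with $f_0=\Id_A$ and $f_{n+1}=f_n*(\Id_A+g_{n+1})$, where each correction $g_{n+1}$ (vanishing on $D_{\leq n}$) is chosen so that the transported multiplication $\ti m_{f_{n+1}}$ satisfies the left and right unit conditions for $u\lambda$ up to degree $n+1$. The inductive step is a somewhat delicate congruence computation using Lemma~\ref{le:computation_f} repeatedly; in particular the paper never first proves that $\ti m$ is unital, and the unit $f*(u\lambda)$ of $\ti m$ emerges only at the very end as a consequence.

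Your approach decouples the two issues. You first show abstractly that $\ti m$ is unital, by lifting the idempotent $u$ of the degree-zero algebra to an idempotent $\hat e$ of $(\mathcal A,\bullet)$ via the Newton iteration $e_{k+1}=2e_k-e_k\bullet e_k$; the key invertible-idempotent trick with $L=\ti m*(\hat e\ot\Id_A)$ then forces $\hat e$ to be a genuine two-sided unit. Once $\mathcal A$ is unital you invert $u\lambda$ in $\mathcal A$ by the Takeuchi-style argument (which does carry over, since for $h\equiv_n 0$ the triple convolution $h\bullet h$ vanishes on $D_{\leq n+1}$ by the grading), and take $f=L_\omega$ for the inverse $\omega$; the identity $L_a*L_b=L_{a\bullet b}$ and naturality of $l,r$ then make the verification of the unit axioms for $\ti m_f$ essentially formal.

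What each approach buys: the paper's argument stays entirely within the congruence calculus it has already set up and produces $f$ in one inductive sweep, at the cost of heavier step-by-step bookkeeping. Your argument is more modular and conceptually transparent (idempotent lifting, then transport), and it isolates as an independent fact that any $\iota$-deformation of a unital algebra is itself unital. Both rely on exactly the same filtration/congruence infrastructure from Section~\ref{sec:TM_C} and on Takeuchi's Lemma.
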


\begin{proof}
To simplify notation, we will denote the composition of two morphisms
$f$ and $g$ in $\cat V$ by $fg$, rather than $f\circ g$. Since
$D$ is graded, $\lambda$ is a coalgebra retract of $\iota$. Let
$\phi:=\iota\lambda$. We aim to construct a sequence $\{f_{n}\}_{n\in\N}$
of invertible morphisms in $\M_{\ti C}(A,A)$, such that $f_{n}\equiv_{n-1}f_{n-1}$,
for any $n>0$, and $\ti m_{f_{n}}:=f_{n}^{-1}\ast\ti m\ast(f_{n}\ot f_{n})$
satisfies the relations: 
\begin{equation}
\ti m_{f_{n}}*(u\lambda\ot\Id A{})*l_{A}^{-1}\equiv_{n}\Id A{}\equiv_{n}\ti m_{f_{n}}*(\Id A{}\ot u\lambda)*r_{A}^{-1}.\tag{$U_{n}$}\label{eq:unit_n}
\end{equation}
Assuming this sequence has been constructed, we proceed to conclude
the proof of the theorem. Since $f_{p}$ and $f_{p+1}$ are $p$-congruent
for all $p$, it follows that $f_{n}(c)=f_{p}(c)$,  provided that
$c\in D_{n}$ and $p\geq n$.  Thus, we can define $f:\ti C\to\mbb A1$
such that $f(z):=f_{n}(z)$, for any $z\in D_{\leq n}$. Obviously,
$f\equiv_{n}f_{n}$ for all $n$, and $f$ is invertible, because
$f\equiv_{0}\mathrm{Id}_{A}$. Hence, for any $n$, the maps $\tilde{m}_{f}$
and $\ti m_{f_{n}}$ are $n$-congruent, and
\[
\ti m_{f}*(u\lambda\ot\mathrm{Id}_{A})*l_{A}^{-1}\equiv_{n}\ti m_{f_{n}}*(u\lambda\ot\mathrm{Id}_{A})*l_{A}^{-1}\equiv_{n}\mathrm{Id}_{A}.
\]
Similarly, $\ti m_{f}*(\mathrm{Id}_{A}\ot u\lambda)*r_{A}^{-1}\equiv_{n}\mathrm{Id}_{A}$,
for all $n$. Thus, $u\lambda$ is a unit for $\ti m_{f}$. Then,
$f\ast u\lambda$ is also a unit of $\ti m$. 

It remains to prove the existence of $\{f_{n}\}_{n\in\N}$. We proceed
by induction. For $n=0$, we let $f_{0}:=\mathrm{Id}_{A}$. Hence
$\ti m_{f_{0}}=\ti m$ and the relation ($U_{0}$) holds in this case
since $u$ is a unit of $m$. Let us assume that we have already defined
$f_{0},\dots,f_{n}$ satisfying the required properties. We are searching
for $f_{n+1}$ of the following form: 
\[
f_{n+1}:=f_{n}\ast(\Id A{}+g_{n+1})=f_{n}+f_{n}*g_{n+1},
\]
for some $g_{n+1}\in\cat V(\ti C,\mbb A2)$. For now, we only require
that $g_{n+1}\equiv_{n}0$. It follows that the map $\Theta$, defined
by $\Theta(c'\ot c'')=f_{n}(c')g_{n+1}(c'')$, vanishes on $D_{\leq n}\ot D_{\leq n}$.
Note that $g_{n+1}\phi=0$. Also, $f_{n}\phi=\Id A{}$ since $f_{n}$
and $f_{0}$ coincide on $C$ and $\varepsilon_{\ti C}=\varepsilon_{C}\lambda$.
By Equation \eqref{eq:Theta'}, it follows 
\[
f_{n}\ast g_{n+1}\equiv_{n+1}(f_{n}\phi)*g_{n+1}+f_{n}*(g_{n+1}\phi)=g_{n+1}.
\]
Clearly, $f_{n+1}\equiv_{n}f_{n}+g_{n+1}\equiv_{n}f_{n}$. Note that
$f_{n+1}$ is an invertible morphism, being the composition in $\M_{\ti C}$
of two invertible maps; see Takeuchi Lemma. By definition of $f_{n+1}$,
we get: 
\[
m_{f_{n+1}}=(\Id A{}+g_{n+1})^{-1}\ast m_{f_{n}}\ast\big((\Id A{}+g_{n+1})\ot(\Id A{}+g_{n+1})\big).
\]
By Equation \eqref{eq:u_inverse}, the inverse of $\Id A{}+g_{n+1}$
is $(n+1$)-congruent to $\mathrm{Id}_{A}-g_{n+1}$. On the other
hand, 
\[
(\mathrm{Id}_{A}+g_{n+1})\ot(\mathrm{Id}_{A}+g_{n+1})\equiv_{n+1}\id_{A\ot A}+\mathrm{Id}_{A}\ot g_{n+1}+g_{n+1}\ot\mathrm{Id}_{A},
\]
since $g_{n+1}\ot g_{n+1}\equiv_{n+1}0$. It follows: 
\begin{alignat*}{1}
\ti m_{f_{n+1}}\equiv_{n+1}\ti m_{f_{n}} & +\ti m_{f_{n}}*(g_{n+1}\ot\mathrm{Id}_{A}+\mathrm{Id}_{A}\ot g_{n+1})-g_{n+1}*\ti m_{f_{n}}\\
 & -g_{n+1}*\ti m_{f_{n}}*(g_{n+1}\ot\mathrm{Id}_{A}+\mathrm{Id}_{A}\ot g_{n+1}).
\end{alignat*}
To simplify the above expression of $\ti m_{f_{n+1}}$, we will apply
Lemma \ref{le:computation_f} twice.

Let $\Theta_{1}:\ti C\ot\ti C\rightarrow\mbb A1$ denote the linear
map: 
\[
\Theta_{1}(x\ot y)=\ti m_{f_{n}}(x)\big(g_{n+1}(y)\ot A+A\ot g_{n+1}(y)\big)-g_{n+1}(x)\ti m_{f_{n}}(y).
\]
Since $g_{n+1}\equiv_{n}0$, it follows that $\Theta_{1}$ is zero
on $D_{\leq n}^{\ot2}$. Similarly, if $\Theta_{2}:\ti C^{\ot3}\rightarrow\mbb A1$
is given by: 
\[
\Theta_{2}(x\ot y\ot z)=g_{n+1}(x)\ti m_{f_{n}}(y)[g_{n+1}(z)\ot A+A\ot g_{n+1}(z)],
\]
then $\Theta_{2}$ vanishes on $\ti C_{n}^{\ot3}$. We know that $g_{n+1}\phi=0$
and $\ti m_{f_{n}}\phi=m\lambda$. By Lemma \ref{le:computation_f}
we have
\[
\Theta_{1}\Delta\equiv_{n+1}m\lambda*(g_{n+1}\ot\mathrm{Id}_{A}+\mathrm{Id}_{A}\ot g_{n+1})-g_{n+1}*m\lambda\qquad\text{and}\qquad\Theta_{2}\Delta^{2}\equiv_{n+1}0.
\]
In conclusion, 
\[
\ti m_{f_{n+1}}\equiv_{n+1}\ti m_{f_{n}}+m\lambda*(g_{n+1}\ot\Id A{}+\Id A{}\ot g_{n+1})-g_{n+1}*m\lambda.
\]
Furthermore, since $u$ is a unit of $m$, we get: 
\[
\ti m_{f_{n+1}}*(u\lambda\ot\mathrm{Id}_{A})\equiv_{n+1}\ti m_{f_{n}}*(u\lambda\ot\Id A{})+m\lambda*(g_{n+1}\ot\Id A{})*(u\lambda\ot\Id A{}).
\]
Let $\theta{}_{n}$ denote the right-hand side of the above congruence
relation. By induction hypothesis, the first term of $\theta{}_{n}$
is $n$-congruent to $l_{A}$. The second term is $n$-congruent to
0, as $g_{n+1}\equiv_{n}0$.  It follows that $\theta{}_{n}\equiv_{n}l_{A}$. 
Then, $(U_{n+1})$ is true if and only if $\theta{}_{n}\equiv^{n+1}l_{A}$,
that is $\theta{}_{n}-l_{A}$ is zero on $D^{n+1}$. Summarizing,
the first congruence relation of ($U_{n+1}$) is equivalent to 
\begin{equation}
\tag{$U'_{n+1}$}\ti m_{f_{n}}*(u\lambda\ot\Id A{})*l_{A}^{-1}+m\lambda*(g_{n+1}\ot\Id A{})*(u\lambda\ot\Id A{})*l_{A}^{-1}\equiv^{n+1}0.\label{unit_n'}
\end{equation}
Proceeding in a similar way, one shows that $\ti m_{f_{n+1}}$ satisfies
the second relation in $(U_{n+1})$ if and only if 
\begin{equation}
\tag{$U''_{n+1}$}\ti m_{f_{n}}*(\mathrm{Id}_{A}\ot u\lambda)*r_{A}^{-1}+m\lambda*(\mathrm{Id}_{A}\ot g_{n+1})*(\Id A{}\ot u\lambda)*r_{A}^{-1}\equiv^{n+1}0.\label{unit_n''}
\end{equation}
In conclusion, to construct $f_{n+1}$ we must find a morphism $g_{n+1}\in\M_{\ti C}(A,A)$
which vanishes on $D_{\leq n}$ and verifies the congruence relations
$(U'_{n+1})$ and $(U''_{n+1})$. 

By the universal property of the coproduct, there is a unique linear
map $g_{n+1}$ such that $g_{n+1}\equiv^{i}0$ for $i\neq n+1$, and
\[
g_{n+1}\equiv^{n+1}-\ti m_{f_{n}}*(\Id A{}\ot u\lambda)*r_{A}^{-1}.
\]
 In order to prove $(U_{n+1}')$, we denote the second term from
the left-hand side of this relation by $-\zeta_{n}$. Thus,
\begin{alignat*}{1}
-\zeta_{n} & \equiv^{n+1}m\lambda*(\ti m_{f_{n}}\ot\Id A{})*(\Id A{}\ot u\lambda\ot\Id A{})*(r_{A}^{-1}\ot\Id A{})*(u\lambda\ot\Id A{})*l_{A}^{-1}\\
 & \equiv^{n+1}m\lambda*(\Id A{}\ot m_{f_{n}})*(\Id A{}\ot u\lambda\ot\Id A{})*(r_{A}^{-1}\ot\Id A{})*(u\lambda\ot\Id A{})*l_{A}^{-1}\\
 & \stackrel{(1)}{=}m\lambda*(u\lambda\ot\Id A{})*(\Id A{}\ot\ti m_{f_{n}})*(\Id A{}\ot u\lambda\ot\Id A{})*(r_{\I}^{-1}\ot\Id A{})*l_{A}^{-1}\\
 & \stackrel{(2)}{=}l_{A}*(\Id A{}\ot\ti m_{f_{n}})*(\Id A{}\ot u\lambda\ot\Id A{})*(r_{\I}^{-1}\ot\Id A{})*l_{A}^{-1}\\
 & \stackrel{(3)}{=}l_{A}*(\Id{\I}{}\ot\ti m_{f_{n}})*(\Id{\mathbf{\I}}{}\ot u\lambda\ot\Id A{})*(\Id{\I}{}\ot l_{A}^{-1})*l_{A}^{-1}\\
 & \stackrel{(4)}{=}l_{A}*l_{A}^{-1}*\ti m_{f_{n}}*(u\lambda\ot\Id A{})*l_{A}^{-1}=\ti m_{f_{n}}*(u\lambda\ot\Id A{})*l_{A}^{-1}.
\end{alignat*}
For the congruence relation we used Equation \eqref{eq:Theta'}, setting
\[
\Theta(x\ot y)=\ti m_{f_{n}}(x)*\big[(\ti m_{f_{n}}\ot\Id A{}-\Id A{}\ot\ti m_{f_{n}})*(\Id A{}\ot u\lambda\ot\Id A{})*(r_{A}^{-1}\ot\Id A{})*(u\lambda\ot\Id A{})*l_{A}^{-1}\big](y).
\]
Note that $\Theta$ is trivial on $D_{\leq n}\ot D_{\leq n}$, since
$\ti m_{f_{n}}$ is associative and $(U_n)$ holds by induction
hypothesis. Furthermore, (1) and (4) follow by the fact that $r_{A}$
and $l_{A}$ are natural linear maps, (2) holds since $u\lambda$
is a unit of $m\lambda$ and the Triangle Axiom implies (3). 

The proof of $(U_{n+1}'')$  is analogous. We now define $\Theta$
by 
\[
\Theta(x\ot y):=\ti m_{f_{n}}(x)\big[(\ti m_{f_{n}}\ot\Id A{}-\Id A{}\ot\ti m_{f_{n}})*(\Id A{}\ot\Id A{}\ot u\lambda)*(\Id A{}\ot r_{A}^{-1}\big)*(\Id A{}\ot u\lambda)*r_{A}^{-1}\big](y).
\]
Since $\Theta$ is zero on $D_{\leq n}\ot D_{\leq n}$, we can apply
Equation \eqref{eq:Theta'} to obtain that $\zeta_{n}'\equiv^{n+1}\zeta_{n}''$,
where
\begin{alignat*}{1}
\zeta_{n}' & =m\lambda*(\ti m_{f_{n}}\ot\Id A{})*(\Id A{}\ot\Id A{}\ot u\lambda)*(\Id A{}\ot r_{A}^{-1})*(\Id A{}\ot u\lambda\big)*r_{A}^{-1},\\
\zeta_{n}'' & =m\lambda*\big(\Id A{}\ot\ti m_{f_{n}})*(\Id A{}\ot\Id A{}\ot u\lambda)*(\Id A{}\ot r_{A}^{-1})*(\Id A{}\ot u\lambda\big)*r_{A}^{-1}.
\end{alignat*}
The first term of the sum in $(U_{n+1}'')$ and $\zeta'_n$
are identical, since $u\lambda$ is a unit of $m\lambda$ and $r_{A}^{-1}$
is natural. By definition, $\zeta''_{n}=-g_{n+1}$,  so $(U_{n+1}'')$
holds. In conclusion, the theorem is completely proved.
\end{proof}

\subsection*{Final remark}

The approach to deformation theory developed here for associative
algebras can be extended to accommodate Lie algebras and bialgebras
as well. However, these extensions require additional structural assumptions
about the category $\M$. Specifically, since bialgebras are only
well-defined in braided monoidal categories, the category $\M$ must
possess a braided monoidal structure. Similarly, for Lie algebras,
$\M$ must be symmetric (e.g. the category of super vector spaces,
which leads to the study of deformations of super Lie algebras).

\section*{Acknowledgments}

The second-named author was financially supported by CNCS-UEFISCDI,
project CNCSIS PN-III-P4\_PCE-2021-0282, Contract 47/2022. Part of
this work was done during a visit to the University of Mulhouse. He
would like to thank the Laboratoire de Mathématiques, Informatique
et Applications (LMIA) for its worm hospitality.

\end{document}